\tikzstyle{red dot}=[fill=red, draw=black, shape=circle]
\tikzstyle{green dot}=[fill=green, draw=black, shape=circle]
\tikzstyle{white circle}=[fill=white, draw=black, shape=circle, scale=.7, minimum size=1cm]
\tikzstyle{black dot}=[fill=black, draw=black, shape=circle, minimum size=1pt, inner sep=0pt]
\tikzstyle{dashed line}=[ultra thick, draw=black]
\tikzstyle{arrowhead}=[->]
\newtheorem{theorem}{Theorem}[section]
\newtheorem{theoremA}{Theorem}
\newtheorem{theoremB}{Theorem}
\newtheorem{lemma}[theorem]{Lemma}
\newtheorem{proposition}[theorem]{Proposition}
\newtheorem{corollary}[theorem]{Corollary}
\theoremstyle{definition}
\newtheorem{definition}[theorem]{Definition}
\newtheorem{example}[theorem]{Example}
\newtheorem{remark}[theorem]{Remark}
\newtheorem{question}[theorem]{Question}
\newtheorem{notation}[theorem]{Notation}
\newcommand{\nsg}{\vartriangleleft}
\newcommand{\Leaf}{\ensuremath{\mathrm{Leaf}}}
\newcommand{\RR}{\mathbb{R}}
\newcommand{\QQ}{\mathbb{Q}}
\newcommand{\ZZ}{\mathbb{Z}}
\newcommand{\NN}{\mathbb{N}}
\newcommand{\catname}[1]{\ensuremath{\mathsf{{#1}}}}
\newcommand{\op}{^\text{op}}
\newcommand{\im}[1]{\ensuremath{\text{im}\left(#1\right)}}
\newcommand{\wt}{\widetilde}
\newcommand{\ad}{\mathrm{ad}}
\newcommand{\supp}{\mathrm{supp}}
\newcommand{\stabv}{\mathrm{Stab}_V}
\newcommand{\fixv}{\mathrm{Fix}_V}
\newcommand{\epi}{\chi}
\newcommand{\surj}{\twoheadrightarrow}
\newcommand{\aut}{\mathrm{Aut}}
\newcommand{\stabn}{\mathrm{Stab}_N(\mathbb{Q}_2)}
\newcommand{\nvc}{\mathrm{N}_{H(\mathfrak{C})}(V)}
\newcommand{\bg}{\mathcal{B}(G)}
\newcommand{\longsquiggly}{\xymatrix{{}\ar@{~>}[r]&{}}}
\newcommand{\fg}[1]{\widehat{#1}}
\newcommand{\cat}[1]{\catname{#1}}
\newcommand{\frc}[1]{\mathrm{Frac}\left({#1}\right)}
\newcommand{\joneso}[1]{K\left({#1} \right)}
\newcommand{\jonesm}[1]{R\left({#1} \right)}
\newcommand{\prj}[1]{p\left({#1}\right)}
\newcommand{\caret}{\raisebox{-2pt}{\begin{tikzpicture}[scale=.3] \draw (0,0) -- (-.5,1); \draw(0,0) -- (.5,1);\end{tikzpicture}}}
\newcommand{\fcaret}{\raisebox{-2pt}{\begin{tikzpicture}[scale=.3, yscale=-1] \draw (0,0) -- (-.5,1); \draw(0,0) -- (.5,1);\end{tikzpicture}}}
\newcommand{\cantor}{\mathfrak{C}}
\newcommand{\nc}{\mathrm{NC}}
\newcommand{\limg}{\varprojlim \Gamma}
\newcommand{\lima}{\varprojlim \alpha}
\title{Rigidity and Automorphisms of Groups constructed using Jones' Technology}
\date{}
\author{Christian De Nicola Larsen}
\address{Christian De Nicola Larsen, School of Mathematics and Statistics, University of New South Wales, Sydney NSW 2052, Australia}
\email{c.denicolalarsen@unsw.edu.au}
\begin{document}
\begin{abstract}
Jones' technology, developed by Vaughan Jones during his exploration of the connections between conformal field theory and subfactors, is a powerful mechanism for generating actions of groups coming from categories, notably Richard Thompson's groups $F \subseteq T \subseteq V$.

We give a structure theorem for the isomorphisms between split extensions of Thompson's group $V$ arising from Jones' technology, generalising results of Brothier. Using this structure theorem, we classify a family of unrestricted, twisted permutational wreath products up to isomorphism, and decompose their automorphism groups in the untwisted case.

These unrestricted wreath products arise from applying Jones’ technology to contravariant monoidal functors. In contrast, using covariant functors, Brothier constructed a large class of \textit{restricted} wreath products, classified them up to isomorphism, and completely described their automorphism groups. Our work broadens Brothier's findings and highlights the duality between groups constructed using covariant and contravariant functors.
\end{abstract}
\maketitle
\tableofcontents

\section{Introduction}\label{sec:intro}
In this article, we extend the study of groups constructed using a framework due to Jones \cite{jonesunitary}, initiated by Brothier \cite{jonestechI, jonestechII, haagwreath}. This framework is called Jones' technology, discovered by Jones while studying the connections between conformal field theories and subfactor theory (see \cite{brothiersurvey} for a wonderful survey). It is a powerful method of generating actions of groups coming from categories, such as Richard Thompson's groups \cite{cfp, belkF} and, more generally, forest-skein groups \cite{fscI, fscII}.

The input to Jones' technology is a functor $\Phi: \cat{C} \to \cat{D}$, and the output is a functor $\widehat{\Phi}: \widehat{\cat{C}} \to \cat{D}$, where $\widehat{\cat{C}}$ is a groupoid obtained from including inverses to the morphisms of $\cat{C}$ (the Gabriel-Zisman localisation of $\cat{C}$ \cite{gz}). Restricting $\widehat{\Phi}$ to the automorphism groups of $\widehat{\cat{C}}$ yields group actions on objects of $\cat{D}$. The idea is to choose the category $\cat{C}$ such that these automorphism groups are interesting, while functors $\Phi: \cat{C} \to \cat{D}$ are easy to construct.

The most studied example is when $\cat{C}$ is the category $\cat{F}$ of binary forests, objects being natural numbers, and a morphism $m \to n$ being a binary forest with $m$ roots and $n$ leaves. Composition is given by connecting roots and leaves of forests. The automorphism group of an object $n \in \NN$ in $\widehat{\cat{F}}$ is the Higman-Thompson group $F_{2,n}$, first defined by Brown \cite{brown}. In particular, $F_{2,1}$ is equal to Thompson's group $F$, but $F_{2,n} \cong F_{2,1}$ for all $n \in \NN$ since $\widehat{\cat{F}}$ is connected.

Jones found a functor $\Phi: \cat{F} \to \cat{CT}$ into the category of Conway tangles that assigns a knot or link to each element of Thompson's group $F$, and showed that every knot and link arises in this way \cite{jonesunitary}. This sparked a connection between Thompson's groups, braid groups, and knot theory (see \cite{jonesknotsurvey, aiellosurvey, joneslectures} for an overview). Jones also defined the \textit{oriented subgroup} $\vec{F} \leq F$, and showed that every oriented link arises from an element of $\vec{F}$, up to distant unions with unknots \cite{jonesunitary}. Aiello then showed how to obtain every oriented link exactly \cite{aielloalexander}. Golan and Sapir \cite{golansapira, golansapirb} demonstrated that the group $\vec{F}$ is the first example of an infinite index maximal subgroup of $F$ that is not a stabiliser of a point in the unit interval \cite{savchuka, savchukb}, among having many other remarkable properties. Many interesting maximal subgroups of Thompson-related groups have since been discovered \cite{orientedbrownthompson, threecolourable, typesys, golanmaximal, golangen}, along with other interesting subgroups related to Jones' technology \cite{nogo, ren, eventhreecolourable, tlj, orientedbrownthompson}.

A trick to find many functors $\Phi: \cat{F} \to \cat{D}$ is to exploit the monoidal structure $\otimes$ on $\cat{F}$, given by horizontal concatenation of forests. If $\cat{D}$ is a monoidal category, a monoidal functor $\cat{F} \to \cat{D}$ is specified by an object $a \in \cat{D}$ and a morphism $\alpha: a \to a \otimes a$ (covariant), or a morphism $\omega: a \otimes a \to a$ (contravariant). Thus, such morphisms produce actions $F \curvearrowright K(\alpha)$, $K(\omega)$ on objects $K(\alpha)$, $K(\omega)$ of \cat{D}. We call these \textit{Jones actions}.

Taking $(\cat{D}, \otimes)$ to be the category of Hilbert spaces (morphisms being isometries) equipped with the direct sum yields a rich class of unitary representations of $F$ called Pythagorean representations \cite{pythagI, pythagII, pythagIII, pythagIV}. These provide a wealth of irreducible, pairwise non-isomorphic representations of $F$. They also witness the first known examples of representations of $F$ that are not the induction of finite-dimensional representations. Replacing the direct sum $\oplus$ with the usual tensor product $\otimes$ of Hilbert spaces, one obtains unitary representations of $F$ that can be extended to the larger Thompson's groups $F \subseteq T \subseteq V$, and offer concise proofs \cite{kazdhanhaagF} of existing results: Thompson's group $V$ does not have Kazdhan's property (T) \cite{rez, navas,gs}, and $T$ has the Haagerup property \cite{far}.

In this article, we are interested in monoidal functors $\cat{F} \to (\cat{Grp}, \times)$, the category of groups equipped with the direct product $\times$. Many of our results concern groups constructed using contravariant monoidal functors $\cat{F} \to \cat{Grp}$, and highlight the duality to the covariant case studied by Brothier in \cite{jonestechI, jonestechII, haagwreath}. A covariant monoidal functor $\cat{F} \to \cat{Grp}$ is specified by a group morphism $\alpha: \Gamma \to \Gamma \times \Gamma$. Jones' technology then outputs a functor $\widehat{F} \to \cat{Grp}$, and thus an action of Thompson's group $F$ on a group $K(\alpha)$. Extending this action to the larger Thompson's group $V$, the semidirect product $G(\alpha) := K(\alpha) \rtimes V$ produces a family of groups $\left(G(\alpha)\right)_{\alpha: \Gamma \to \Gamma^2}$ that has been been studied independently by Brothier \cite{jonestechI, jonestechII, haagwreath}, Tanushevski \cite{tan16}, and Witzel-Zaremsky \cite{wz}.

If $\alpha: \Gamma \to \Gamma^2, g \mapsto (\beta(g),e)$ for some $\beta \in \aut(\Gamma)$, the group $G(\alpha)$ is isomorphic to a restricted twisted wreath product $\Gamma \wr V = \oplus_{\QQ_2} \Gamma \rtimes V$ \cite{haagwreath, jonestechI}, where Thompson's group $V$ acts classically on the dyadic rationals $\QQ_2$ contained in the unit interval \cite{cfp, belkF}. It was shown in \cite{haagwreath} that $G(\alpha)$ has the Haagerup property when $\Gamma$ does, providing the first examples of finitely presented wreath products having the Haagerup property for a non-trivial reason, since $V$ is non-amenable and $\QQ_2$ is infinite. The action $\pi: V \curvearrowright \oplus_{\QQ_2} \Gamma$ is twisted using the automorphism $\beta \in \aut(\Gamma)$ as follows: 
\[\pi(v)(a)(x) = \beta^{\log_2(v'(v^{-1}x))}(a(v^{-1}x)), \ v \in V, \ a \in \oplus_{\QQ_2} \Gamma, \ x \in \QQ_2. \tag{$*$} \]
Here we interpret an element of Richard thompson's group $V$ as a piecewise linear bijection of the unit interval, letting $v'(x)$ be the slope of $v$ at $x \in \QQ_2$ \cite{cfp}. The slope of an element $v \in V$ at any dyadic rational is a power of $2$, ensuring an integer power of $\beta$ in $(*)$.

In \cite{jonestechI}, it was shown that these wreath products $G(\alpha)$ built from group automorphisms $\beta \in \aut(\Gamma)$ exhibit a remarkable rigidity property. If $\beta$ and $\wt{\beta}$ are automorphisms of groups $\Gamma$ and $\wt{\Gamma}$, and $G$, $\wt{G}$ are the wreath products built using $\beta$ and $\wt{\beta}$ respectively, then we have that $G \cong \wt{G}$ if and only if there exists an isomorphism $\gamma: \Gamma \to \wt{\Gamma}$ and an element $\wt{h} \in \wt{\Gamma}$ such that 
    \[\wt{\beta} = \ad(\wt{h}) \circ \gamma \beta \gamma^{-1},\]
where $\ad(\wt{h})$ is the inner automorphism $\wt{\Gamma}$ corresponding to $\wt{h}.$ I.e., $G$ is isomorphic to $\wt{G}$ if and only if $\beta$ and $\wt{\beta}$ are outer-conjugate.

This rigidity follows from the striking result that any isomorphism $\theta: G \to \wt{G}$ is \textit{spatial}, meaning that $\theta$ restricts to an isomorphism $\kappa: \oplus_{\QQ_2} \Gamma \to \oplus_{\QQ_2} \wt{\Gamma}$, and $\kappa$ preserves support of elements of $\oplus_{\QQ_2} \Gamma$, up to a bijection of $\mathbb{Q}_2$. More precisely, there is a isomorphism $\kappa: \oplus_{\QQ_2} \Gamma \to \oplus_{\QQ_2} \wt{\Gamma}$ and a bijection $\varphi: \mathbb{Q}_2 \to \mathbb{Q}_2$ satisfying \[\theta(a) = \kappa(a)\] and \[\supp(\kappa(a)) = \varphi(\supp(a))\] for all $a \in \oplus_{\QQ_2} \Gamma$. This result also led to a complete description of the automorphism group of $G(\alpha)$ in the untwisted case $\beta = \text{id}.$

In \cite{jonestechII}, isomorphisms between groups $G(\alpha)$ constructed using two automorphisms of a group $\Gamma$ were observed to be spatial, up to a multiple by a centre-valued group morphism. Here $\alpha: \Gamma \to \Gamma^2, g \mapsto (\alpha_0(g), \alpha_1(g))$, with $\alpha_0, \alpha_1 \in \aut(\Gamma)$. In this case, $G(\alpha)$ is a semidirect product $L \Gamma \rtimes V$, where $L\Gamma$ denotes the continuous maps from the Cantor space $\cantor := \{0,1\}^{\NN}$ to $\Gamma$ (a discrete group). These groups appear naturally in field theories where physical space is approximated by $\QQ_2$ \cite{bsquant, bsgauge}.

\subsection{Main results}
In this article, we extend the rigidity results for wreath products and split extensions of $V$ by loop groups $L \Gamma$ obtained by Brothier in \cite{jonestechI, jonestechII}: \textit{any} isomorphism $\theta: G(\alpha) \to G(\wt{\alpha})$ is spatial modulo a centre-valued morphism, even when $\alpha: \Gamma \to \Gamma^2$ and $ \wt{\alpha}: \wt{\Gamma} \to \wt{\Gamma}^2$ are not specified by automorphisms of $\Gamma$ and $\wt{\Gamma}$.
    \begin{theoremA}\label{thm:covariantrigidity} Suppose that $\alpha: \Gamma \to \Gamma^2$ and $ \wt{\alpha}: \wt{\Gamma} \to \wt{\Gamma}^2$ are group morphisms, and that $\theta: G(\alpha) \to G(\wt{\alpha})$ is an isomorphism of groups. Then there exists an isomorphism $\kappa^0: K(\alpha) \to K(\wt{\alpha})$, a group morphism $\zeta: G(\alpha) \to Z\left(G(\wt{\alpha})\right)$, and a homeomorphism $\varphi$ of the Cantor space $\cantor = \{0,1\}^{\mathbb{N}}$ such that \[\theta(a) = \kappa^0(a) \cdot \zeta(a)\] and \[\supp(\kappa^0(a)) = \varphi(\supp(a)) \text{, for all $a \in K(\alpha)$. }\] Moreover, the map \[\theta^0: G(\alpha) \to G(\wt{\alpha}), av \mapsto \kappa^0(a) \cdot \theta(v)\] is an isomorphism of groups.
    \end{theoremA}
The elements $a \in K(\alpha)$ are not always obviously groups of functions on some set, so another notion of support must be used (Definition \ref{def:gensupp}, Remark \ref{rmk:covariantmonoidalexample}). The support of an element $a \in K(\alpha)$ is always a closed subset of the Cantor space $\cantor = \{0,1\}^{\mathbb{N}}$; for the wreath products $\oplus_{\QQ_2} \Gamma \rtimes V$ of \cite{jonestechI}, once we identify $\mathbb{Q}_2$ with the finitely supported sequences in $\cantor$, the usual notion of support for elements of $\oplus_{\QQ_2} \Gamma$ is recovered.

The proof of Theorem \ref{thm:covariantrigidity} relies on the following observation. We know that each group morphism $\alpha: \Gamma \to \Gamma^2$ defines a monoidal functor $\Phi_\alpha: \cat{F} \to \cat{Grp}$, and Jones' technology outputs a functor $\widehat{\Phi}_\alpha: \widehat{\cat{F}} \to \cat{Grp}$. We then observe that the functor $\widehat{\Phi}_\alpha$ is monoidal as well; there is an isomorphism $R(\alpha): K(\alpha) \to K(\alpha)^2$ of groups such that $\widehat{\Phi}_\alpha = \Phi_{R(\alpha)}$. The functor $\Phi_{R(\alpha)}: \fg{\cat{F}} \to \cat{Grp}$ is defined in the same way as $\Phi_\alpha: \cat{F} \to \cat{Grp}$, but can be obviously extended to $\fg{\cat{F}}$ since $R(\alpha)$ is an isomorphism.

Jones' technology for covariant monoidal functors $\cat{F} \to (\cat{Grp}, \times)$ can then be described as assigning each group morphism $\alpha: \Gamma \to \Gamma^2$ a group \textit{isomorphism} $R(\alpha): K(\alpha) \to K(\alpha)^2$, the best approximation to $\alpha$ by an isomorphism. The functor $\Phi_{R(\alpha)}: \widehat{\cat{F}} \to \cat{Grp}$ then restricts to an action $F \curvearrowright K(\alpha)$, which we extend to the larger Thompson group $V$, obtaining the group $G(\alpha) := K(\alpha) \rtimes V$.

Thus, the class of groups $G(\alpha) = K(\alpha) \rtimes V$ obtained from group morphisms $\alpha: \Gamma \to \Gamma^2$ is precisely the class of groups $K \rtimes V$ obtained from group isomorphisms $R: K \to K^2$. This lets us rephrase Theorem \ref{thm:covariantrigidity}.
    \begin{theoremA}\label{thm:genrigidity}
    Suppose that $R: K \to K^2$ and $\wt{R}: \wt{K} \to \wt{K}^2$ are group isomorphisms, and that $\theta: G \to \wt{G}$ is an isomorphism of groups, where $G := K \rtimes V$ and $\wt{G} := \wt{K} \rtimes V.$ Then there exists an isomorphism $\kappa^0: K \to \wt{K}$, a group morphism $\zeta: G \to Z \wt{G}$, and a homeomorphism $\varphi$ of the cantor space such that \[\theta(a) = \kappa^0(a) \cdot \zeta(a),\] and \[\supp(\kappa ^0(a)) = \varphi(\supp(a)) \text{ for all $a \in K$.}\] Moreover, the map $\theta^0: G \to \wt{G}$ defined by the formula \[\theta^0(av) := \kappa^0(a) \cdot \theta(v)\] is an isomorphism of groups.
    \end{theoremA}
The remainder of the article is an exploration of Jones' technology applied to \textit{contravariant} monoidal functors $\cat{F} \to (\cat{Grp}, \times)$, which is dual to the covariant case studied in \cite{jonestechI, jonestechII, haagwreath} by Brothier. Rather than a group morphism $\alpha: \Gamma \to \Gamma^2$, a contravariant monoidal functor $\cat{F} \to (\cat{Grp}, \times)$ is specified by a group morphism $\omega: \Gamma^2 \to \Gamma$. We introduce the group $K(\omega)$ and an isomorphism $R(\omega): K(\omega) \to K(\omega)^2$, which can be thought of as the best approximation to $\omega$ by an isomorphism. The isomorphism $R(\omega)$ then defines a functor $\widehat{\cat{F}} \to \cat{Grp}$, and thus an action of Thompson's group $V \curvearrowright K(\omega)$, obtaining a group $G(\omega) := K(\omega) \rtimes V.$ This is the dual process of constructing an action $V \curvearrowright K(\alpha)$ from a group morphism $\alpha: \Gamma \to \Gamma^2$.

We aim to answer the following: do the groups $G(\omega)$ admit a nice description, analagous to the groups $G(\alpha)$ constructed using covariant functors? Do they have nice rigidity properties, and do their automorphism groups have a nice characterisation? Answers to the second and third questions will be provided by applications of Theorem \ref{thm:genrigidity}.

Our starting point for the first question will be group morphisms $\omega: \Gamma^2 \to \Gamma, (g,h) \mapsto \beta(g)$, for some automorphism $\beta \in \aut(\Gamma).$ In Section \ref{sec:wreath} we show that $G(\omega) = K(\omega) \rtimes V$ is isomorphic to the \textit{unrestricted} permutational wreath product $\prod_{\QQ_2} \Gamma \rtimes V$ with action $\pi: V \curvearrowright \prod_{\QQ_2} \Gamma$ given by the formula \[\pi(v)(a)(x) = \beta^{-\log_2(v'(v^{-1}x))}(a(v^{-1}x)), \ v \in V, \ a \in \prod_{\QQ_2} \Gamma, \ x \in \QQ_2. \tag{$**$}\] Notice the similarity between the formulae $(*)$ and $(**)$.

Such a group $G(\omega) = K(\omega) \rtimes V$ comes from an isomorphism $R(\omega): K(\omega) \to K(\omega)^2$, so we may apply Theorem \ref{thm:genrigidity} to start classifying these wreath products up to isomorphism, and study their automorphisms. Indeed, in Section \ref{sec:wreath}, we use Theorem \ref{thm:genrigidity} to prove that these unrestricted wreath products have the same classification as their restricted counterparts.
\begin{theoremB}\label{thm:thinclassificationintro}
    Suppose that $\Gamma, \wt{\Gamma}$ are groups and that $\beta, \wt{\beta}$ are automorphisms of $\Gamma$ and $\wt{\Gamma}$, respectively. This yields group morphisms $\omega: \Gamma^2 \to \Gamma, (g,h) \mapsto \beta(g)$, and $\wt{\omega}: \wt{\Gamma}^2 \to \wt{\Gamma}, (\wt{g}, \wt{h}) \mapsto \wt{\beta}(\wt{g}).$
    
    We have that $G(\omega) \cong G(\wt{\omega})$ if and only if $\wt{\beta} = \ad(\wt{h}) \circ \gamma \beta \gamma^{-1}$ for some $\wt{h} \in \wt{\Gamma}$ and some group isomorphism $\gamma: \Gamma \to \wt{\Gamma}.$
\end{theoremB}
At the end of Section \ref{sec:wreath} we relax the condition that $\beta \in \aut(\Gamma)$ to $\beta$ being an aribtrary endomorphism of $\Gamma$, at the cost of weakening the classification of Theorem \ref{thm:thinclassificationintro}. We show that if $\beta \in \text{End}(\Gamma)$, and $\omega: (g,h) \mapsto \beta(g)$, there exists a group $\varprojlim \Gamma$ and an automorphism $\varprojlim \beta \in \aut(\varprojlim \Gamma)$ satisfying $G(\omega) \cong G(\varprojlim \omega)$, where $\varprojlim \omega: \varprojlim \Gamma^2 \to \varprojlim \Gamma$ is defined in the same way using the automorphism $\varprojlim \beta.$ Theorem \ref{thm:thinclassificationintro} yields the following weaker classification of the groups $G(\omega)$ coming from endomorphisms $\beta \in \text{End}(\Gamma).$
\begin{theoremB}\label{thm:weakclassification}
    Suppose that $\Gamma, \wt{\Gamma}$ are groups, and that $\beta, \wt{\beta}$ are endomorphisms of $\Gamma$ and $\wt{\Gamma}$, respectively. Consider the group morphisms $\omega: \Gamma^2 \to \Gamma, (g,h) \mapsto \beta(g)$, and $\wt{\omega}: \wt{\Gamma}^2 \to \wt{\Gamma}, (\wt{g}, \wt{h}) \mapsto \wt{\beta}(g).$ Then \[G(\omega) \cong G(\wt{\omega}) \text{ if and only if } \varprojlim \wt{\beta} = \ad(\wt{h}) \circ \gamma  \circ \varprojlim \beta \circ \gamma^{-1} \tag{$\lozenge$} \] for some $\wt{h} \in \varprojlim \Gamma $ and some isomorphism $\gamma: \varprojlim \Gamma \to \varprojlim \wt{\Gamma}.$
\end{theoremB} 
Dually, in \cite{jonestechI} it was shown that for each group $\Gamma$ and endomorphism $\beta \in \Gamma$, there exists a group $\varinjlim \Gamma$ and an automorphism $\varinjlim \beta$ of $\varinjlim \Gamma$ such that $G(\alpha) \cong G(\varinjlim \alpha)$, where $\alpha: \Gamma \to \Gamma^2, g \mapsto (\beta(g), e)$, and $\varinjlim \alpha: \varinjlim \Gamma \to \varinjlim \Gamma^2$ is defined in the same way using the automorphism $\varinjlim \beta \in \aut\left(\varinjlim \Gamma\right)$. Moreover, Brothier shows that if $\wt{\Gamma}$ is another group and $\wt{\beta} \in \mathrm{End}(\wt{\Gamma})$, then
\[G(\alpha) \cong G(\wt{\alpha}) \text{ if and only if } \varinjlim \wt{\beta} = \ad(\wt{h}) \circ \gamma \circ \varinjlim \beta \circ  \gamma^{-1} \tag{$\lozenge \lozenge$} 
\]
for some $\wt{h} \in \varinjlim \wt{\Gamma}$ some isomorphism $\gamma: \varinjlim \Gamma \to \varinjlim \wt{\Gamma}$. 

The classifications $(\lozenge)$ and $(\lozenge \lozenge)$ should be thought of as dual. Indeed, if $\Gamma = \ZZ$ and $\beta \in \text{End}(\ZZ)$ is given by multiplication by a natural number $q \geq 2$, then $\varinjlim \Gamma \cong \ZZ[\frac{1}{q}]$, and $\varinjlim \beta \in \aut\left(\ZZ[\frac{1}{q}]\right)$ is again given by multiplication by $q$ \cite[Example 3.4]{jonestechI}. However, $\varprojlim \Gamma$ is the trivial group (Example \ref{eg:triviallimgrp}). If we instead consider the dual endomorphism $\widehat{\beta}: S^1 \to S^1, z \mapsto z^q$, then $\varprojlim \Gamma \cong \widehat{\ZZ[\frac{1}{q}]}$, the Pontryagin dual of $\ZZ[\frac{1}{q}]$, and $\varprojlim \widehat{\beta}$ is the automorphism of $\widehat{Z[\frac{1}{q}]}$ obtained by precomposing characters with $\varinjlim \beta$ (Example \ref{eg:pontduality}). Symbolically, $\varprojlim \widehat{\beta} = \widehat{\varinjlim \beta}.$

The next point of entry for describing groups $G(\omega)$ coming from group morphisms $\omega: \Gamma^2 \to \Gamma$ are those in which $\Gamma$ is abelian, and $\omega: (g,h) \mapsto \beta_0(g) \cdot \beta_1(h)$ for some automorphisms $\beta_0, \beta_1 \in \Gamma$. A nice description and classification of these groups $G(\omega)$ up to isomorphism is desirable but could be difficult, since the analagous question for group morphisms $\alpha: \Gamma \to \Gamma^2, g \mapsto (\beta_0(g), \beta_1(g))$ was only partially answered in \cite{jonestechII}. We do not consider these groups in this article, and this would be a natural direction for future work.

Our final application of Theorem \ref{thm:genrigidity} is a decomposition of $\aut(G)$ into an iterated semidirect product of groups $A_1, \dots, A_6$, where $G$ is an unrestricted, untwisted wreath product $\prod_{\QQ_2} \Gamma \rtimes V.$ Here $G \cong G(\omega)$, where $\omega: \Gamma^2 \to \Gamma, (g,h) \mapsto g.$ Using the notation of Theorem \ref{thm:genrigidity}, given an automorphism $\theta \in \aut(G)$, we may write
\[\theta(a) = \kappa^0(a) \cdot \zeta(a), \ a \in \prod_{\mathbb{Q}_2} \Gamma,\] where 
\[
\supp(\kappa^0(a)) = \varphi(\supp(a))
\] for all $a \in \prod_{\mathbb{Q}_2} \Gamma$. One can show that $\varphi$ is uniquely determined, yielding a map $\epi_1: \aut(G) \to \mathrm{Homeo}(\cantor)$, the homeomorphisms of the Cantor space $\cantor = \{0,1\}^{\mathbb{N}}$. It turns out that $\epi_1$ is a split epimorphism onto its image, allowing us to write $\aut(G) = \ker \epi_1 \rtimes A_1$, where $A_1 := \mathrm{im}(\epi_1).$ We then define a split epimorphism $\epi_2: \ker \epi_1 \to \aut(\Gamma)$, obtaining that $\aut(G) = (\ker \chi_2 \rtimes A_2) \rtimes A_1$, where $A_2 := \aut(\Gamma).$ Continuing in this fashion, we arrive at a decomposition of $\aut(G).$ 

\begin{theoremB}\label{thm:introautdecomp}
    We have the decomposition \[\aut(G) =    ((((A_6\rtimes A_5) \rtimes A_4)\rtimes A_3)\rtimes A_2)\rtimes A_1, \] where
\begin{itemize}
    \item $A_1$ is the group of homeomorphisms of the Cantor space which stabilise $\mathbb{Q}_2$ and normalise $V$,
    \item $A_2 = \aut(\Gamma)$,
    \item $A_3 = Z \Gamma$,
    \item $A_4 = \left(\{h \in \prod_{\mathbb{Q}_2} \Gamma \ | \ h(0) \in Z \Gamma \}\right) / Z \Gamma$,
    \item $A_6 = \text{Hom}(G,ZG).$
\end{itemize} The group $A_5$ consists of the automorphisms $\theta \in \aut(G)$ satisfying the following conditions:
\begin{itemize}
    \item $\supp(\theta(a)) = \supp(a)$ for all $a \in K$,
    \item $\theta$ restricts to the identity on $\oplus_{\QQ_2} \Gamma$ and $V.$
\end{itemize}
\end{theoremB}
Hereit makes sense to say that $A_1$ is contained in the normaliser of $V$ in $\mathrm{Homeo}(\cantor)$, since $V$ acts faithfully on the Cantor space $\cantor = \{0,1\}^{\mathbb{N}}$ via prefix replacement. This restricts to the action of $V$ on the dyadic rationals $\mathbb{Q}_2 = \{0,1\}^{(\mathbb{N})}$. Furthermore, $Z\Gamma$ is viewed as the group of constant maps $\QQ_2 \to Z\Gamma$ for the quotient appearing in $A_4$. Finally, for each $a \in K$, we have that \[\supp(a) = \overline{\{x \in \QQ_2 \ | \ a(x) \neq e_\Gamma\}},\] where the closure is taken in the Cantor space $\cantor$ with respect to the Tychonoff topology. 

The group $A_5$ appearing in Theorem \ref{thm:introautdecomp} is mysterious. In Remark \ref{rmk:exoticauto} we note that for each $\theta \in A_5$, there exists a group morphism $\eta: \prod_{\QQ_2} \Gamma \to \prod_{\mathbb{Q}_2} Z \Gamma$ such that
\[\theta(a) = a \cdot \eta(a) \text{ for all } a \in K.\]
It follows that if either $\Gamma$ is a perfect group or $Z\Gamma$ is trivial it must be that $A_5 = \{\text{id}_G\}$, yielding a more complete description of $\aut(G)$. It is unknown however if the group $A_5$ can be non-trivial in the simplest case.
\begin{question}\label{question:z2auto}
    If $\Gamma = \ZZ_2$, does there exist a non-trivial automorphism in $A_5?$
\end{question}
In this case we can identify the group $\prod_{\QQ_2} \Gamma$ with the group $\mathcal{P}(\QQ_2)$ of all subsets of $\QQ_2$ equipped with symmetric difference $\Delta$. The action $V \curvearrowright \mathcal{P}(\QQ_2)$ is the pointwise action $vX = \{v(x) \ | \ x \in X\}$ for all $v \in V$ and $X \subseteq \QQ_2.$ This lets us rephrase Question \ref{question:z2auto}:
\begin{question}\label{question:z2autorephrased}
    Is there a bijection $\kappa: \mathcal{P}(\QQ_2) \to \mathcal{P}(\QQ_2)$ satisfying the following?
    \begin{itemize}
        \item $\kappa(X \Delta Y) = \kappa(X) \Delta \kappa(Y)$,
        \item $\kappa(vX) = v\kappa(X)$,
        \item $\overline{\kappa(X)} = \overline{X}$,
        \item $\kappa(Z) = Z$,
    \end{itemize} for all $X, Y \subseteq \QQ_2, \ v \in V$ and finite subsets $Z \subseteq \QQ_2.$ Here the closure $\overline{X}$ is taken in $\cantor = \{0,1\}^{\mathbb{N}}$.
\end{question} 
Theorems \ref{thm:covariantrigidity}, \ref{thm:genrigidity}, \ref{thm:thinclassificationintro}, \ref{thm:weakclassification}, and \ref{thm:introautdecomp} all concern semidirect products in which Thompson's group $V$ acts. We do not consider the cases when the smaller Thompson's groups $F$ and $T$ are acting, but it would be interesting to see the extent to which our results hold in these cases.
\subsection{Other results on wreath products}
Recall that if $\Gamma$ and $B$ are groups, the restricted standard wreath product of $\Gamma$ by $B$ is the semidirect product $\oplus_{B} \Gamma \rtimes B$, where $B \curvearrowright B$ by left multiplication, and $B \curvearrowright \oplus_{B} \Gamma$ by translation. The corresponding unrestricted wreath product is defined using the direct product instead of direct sum. We call $\Gamma$ the bottom group, $\oplus_{B} \Gamma$ the base group, and $B$ the acting group. In \cite{neuwreath}, Neumann asks the question: ``when can two standard wreath products be isomorphic?" Neumann found a precise answer, that two standard wreath products, restricted or unrestricted, are isomorphic if and only if their bottom and acting groups are pairwise isomorphic.

In \cite{jonestechI}, Brothier proved that every isomorphism between two twisted permutational restricted wreath products $G = \oplus_{\QQ_2} \Gamma \rtimes V$, $\wt{G} = \oplus_{\QQ_2} \wt{\Gamma} \rtimes V$ is \textit{spatial}, i.e. preserves supports of elements of the base groups up to a homeomorphism of the Cantor space $\cantor = \{0,1\}^{\mathbb{N}}$ which stabilises the dyadic rationals $\QQ_2$. Thus, the image of a coordinate subgroup $\Gamma_x \leq \oplus_{\QQ_2} \Gamma$ (maps $\QQ_2 \to \Gamma$ supported at a fixed point $x \in \QQ_2$) is again a coordinate subgroup in $\oplus_{\QQ_2} \wt{\Gamma}$. This establishes an isomorphism $\Gamma \cong \wt{\Gamma}$, and with some work, a relation satisfied by the automorphisms of $\Gamma$ and $\wt{\Gamma}$ used to twist the actions $V \curvearrowright \oplus_{\QQ_2} \Gamma$, $\oplus_{\QQ_2} \wt{\Gamma}$. 

This is comparable to the approach of Neumann, who analysed images of coordinate subgroups in standard wreath products $\oplus_{B} \Gamma \rtimes B$ under isomorphisms. These isomorphisms are not necessarily spatial, so the image of a coordinate subgroup is not always a coordinate subgroup. Unhindered by this, from an isomorphism 
\[\theta: \oplus_{B} \Gamma \rtimes B \to \oplus_{\wt{B}} \wt{\Gamma} \rtimes \wt{B}\]
between two restricted standard wreath products, Neumann obtains an isomorphism $\Gamma \to \wt{\Gamma}$ as follows. Fix $b \in B$ and let $g \in \Gamma$, interpreted as an element of the coordinate subgroup of $\oplus_{B} \Gamma$ at $b$. Take the product of all of the non-trivial values of the map $\wt{B} \to \wt{\Gamma}$ obtained from $\theta(g)$ with respect to a fixed order. This defines an isomorphism $\Gamma \to \wt{\Gamma}$. It is striking that one doesn't need to take such products for the restricted permutational wreath products considered by Brothier in \cite{jonestechI}, due to the rigid structure of the isomorphisms between them.

To conclude that an isomorphism between \textit{unrestricted} standard wreath products descends to an isomorphism of the bottom groups, Neumann takes a different approach, using the fact that in an unrestricted standard wreath product $G = \prod_{B} \Gamma \rtimes B$, any two complements of the base group $\prod_{B} \Gamma$ are conjugate. This is no longer the case in general for the permutational wreath products that we consider (Remark \ref{rmk:autpermwreath}). However, by Theorem \ref{thm:genrigidity}, one can extract a \textit{spatial} isomorphism from any isomorphism $\theta: \prod_{\QQ_2} \Gamma \rtimes V \to \prod_{\QQ_2} \wt{\Gamma} \rtimes V$ between two unrestricted, twisted and permutational wreath products coming from Jones' technology. This lets us mimic the approach for restricted wreath products, considering images of coordinate subgroups to establish that $\Gamma \cong \wt{\Gamma}$ and obtain the classification of Theorem \ref{thm:thinclassificationintro}.

We emphasise that Neumann's results apply to standard (untwisted) wreath products constructed using arbitrary base and acting groups, while in \cite{jonestechI} and the present article, we leave our acting group fixed (Thompson's group $V$). Our wreath products are not standard (i.e. permutational), since we use the action $V \curvearrowright \QQ_2$, and we may twist the action of $V$ on the base group by an automorphism of the bottom group.

In \cite{autstandwreath}, Houghton decomposes the automorphism group $\aut(W)$ of an unrestricted standard wreath product $\prod_{B} \Gamma \rtimes B$, with $B \curvearrowright B$ via left multiplication, with an eye towards the diagonal subgroup $D \leq \prod_{B} \Gamma$. More precisely, Houghton gives the decomposition 
\[
\aut(W) = ( \left(A_{D, B} \rtimes \aut(\Gamma)\right) \cdot I) \rtimes \aut(B), \tag{$\clubsuit$}
\]
where $A_{D, B}$ denotes the automorphisms of $W$ which fix the diagonal subgroup $D \leq \prod_{B} \Gamma$ and $B$ pointwise, and $I$ denotes the inner automorphisms of $W$ induced by elements of $\prod_{B} \Gamma$. In \cite{autpermwreath}, Hassanabadi shows that a similar decomposition can be done for wreath products $W = \prod_{B/H} \Gamma \rtimes B$, where $H \leq B$, for a certain subgroup of $\mathcal{B}(W) \leq \aut(W)$ (see Remark \ref{rmk:autpermwreath}).

When one takes $B := V$ and $H := \mathrm{Stab}(x)$ for some $x \in \mathbb{Q}_2$, the group $W$ becomes the untwisted wreath product $G = \prod_{\QQ_2} \Gamma \rtimes V$ of Theorem \ref{thm:introautdecomp}. While the decomposition of $(\clubsuit)$ focuses on the diagonal subgroup of $\prod_{\QQ_2} \Gamma$, the decomposition of $\aut(G)$ in Theorem \ref{thm:introautdecomp} uses the fact that automorphisms of $G$ act nicely on the subgroup $\oplus_{\QQ_2} \Gamma \leq G$, which comes from Theorem \ref{thm:genrigidity}. In particular, one can use Theorem \ref{thm:introautdecomp} to give an alternative description of $\mathcal{B}(G)$ than \cite{autpermwreath} (Remark \ref{rmk:autpermwreath}).

In \cite{jonestechI}, Brothier completely describes the automorphism group of the restricted wreath product $G_0 := \oplus_{\QQ_2} \Gamma \rtimes V$ by identifying four subgroups $\wt{A}_1, \wt{A}_2, \wt{A}_3, \wt{A}_4 \leq \aut(G_0)$ that generate $\aut(G_0)$, whose elements are referred to as \textit{elementary} automorphisms. Brothier then constructs a group $Q = (\wt{A}_4 \times \wt{A}_3) \rtimes (\wt{A}_2 \times \wt{A}_1)$ and an epimorphism $\Xi : Q \to \aut(G_0)$ with a small kernel, so that every automorphism of $G_0$ can be expressed as a product of elementary automorphisms in an essentially unique way.

In Remark \ref{rmk:restrictedautembedding}, we mention that $\aut(G_0)$ can be decomposed in a similar way to Theorem \ref{thm:introautdecomp} as an iterated semidirect product
\[
\aut(G_0) =  ((C \rtimes \wt{A}_3)\rtimes \wt{A}_2)\rtimes \wt{A}_1,
\] where $C$ is a large subgroup of $\wt{A}_4$. Thanks to the fact that automorphisms of $G_0$ are spatial, originally proved in \cite{jonestechI} and recovered by Theorem \ref{thm:genrigidity}, every automorphism of $G_0$ can be extended to an automorphism of $G = \prod_{\mathbb{Q}_2} \Gamma \rtimes V$. This results in an embedding $\iota: \aut(G_0) \hookrightarrow \aut(G)$ that maps $\wt{A}_1$ to $A_1$, $\wt{A}_2$ to $A_2$, and embeds $C$ into $A_4$. The image of $\wt{A}_3$ under $\iota$ is not always contained in $A_3$, since elements of $A_3$ do not always restrict to automorphisms of $G_0$.

Since $\wt{A}_i$ can be identified with $A_i$ via the embedding $\iota$ for $i = 1, 2$, and the formulae for automorphisms in $A_3$ and $A_4$ closely resemble those of automorphisms in $\wt{A}_3$ and $\wt{A}_4$ respectively, we may regard the groups $A_1,\dots,A_4$ as consisting of elementary automorphisms of $G.$ We also consider the elements of $A_6$ as elementary due to their simple formulae. Question \ref{question:z2autorephrased} then asks if there are any ``exotic" automorphisms of $G$.

\subsection{Covariant vs. contravariant functors}
Applying Jones' technology to group morphisms $\alpha: \Gamma \to \Gamma^2$ and $\omega: \Gamma^2 \to \Gamma$ yields two classes of groups $G(\alpha) = K(\alpha) \rtimes V$ and $G(\omega) = K(\omega) \rtimes V$. These classes are the same; they are both the class of groups $K \rtimes V$ obtained from group isomorphisms $R: K \to K \times K$. However, the novelty of Jones' technology is to construct interesting examples of these groups $K \rtimes V$ easily. This is done by choosing group morphisms $\alpha$ and $\omega$ that are easy to write down, e.g. using an automorphism $\beta$ of a group $\Gamma$. In these situations, the kinds of groups $G(\alpha)$ and $G(\omega)$ one gets from Jones' technology are dual.

This is illustrated by fixing $\alpha: g \mapsto (g,e)$ and $\omega: (g,h) \mapsto g$. The group $G(\alpha)$ is a restricted wreath product $\oplus_{\QQ_2} \Gamma \rtimes V$ \cite{jonestechI, haagwreath}, while $G(\omega)$ is the corresponding unrestricted wreath product $\prod_{\QQ_2} \Gamma \rtimes V$ (Corollary \ref{cor:wreathiso}). The group $G(\omega)$ is always uncountable, thus never finitely generated, however $G(\alpha)$ is of type $\mathrm{F}_n$ if $\Gamma$ is, for all $n \geq 1$. Here $\mathrm{F}_1$ means finitely generated and $\mathrm{F}_2$ means finitely presented, the other $\mathrm{F}_n$ being stronger, higher-dimensional analogues \cite[Section 7]{geoghegan}. These good finiteness properties of $G(\alpha)$ follow from the work of Cornulier for $n \leq 2$ \cite[Theorem 1.1]{presentedwreath}, and Bartholdi-Cornulier-Kochloukova \cite[Proposition 4, Corollary 2]{fpwreath} for $n > 2$. They can also be obtained by adapting \cite{fscII} a different approach of Tanushevski \cite{tanthesis}, outlined by Witzel-Zaremsky \cite[Section 6]{wz}. This all fails for standard restricted wreath products $\Gamma \wr B$, which are never finitely presented as soon as $B$ is infinite, by a result of Baumslag \cite{baumwreath}.

In general, $K(\alpha)$ is a direct limit of powers of $\Gamma$, while $K(\omega)$ is an inverse limit of powers of $\Gamma$. If $\Gamma$ is finite, $K(\alpha)$ is discrete, while $K(\omega)$ is profinite. Thus, one can think of $G(\alpha)$ as a discrete object, and $G(\omega)$ as a locally compact object. There are also many examples of dense embeddings $G(\alpha) \hookrightarrow G(\omega)$, analagous to the embedding of a restricted wreath product into an unrestricted one (Remark \ref{rmk:functoriality}).

A nice feature of Theorem \ref{thm:genrigidity} is that it applies to both constructions of groups $G(\alpha)$ and $G(\omega)$, extending the settings in which we can establish rigidity properties, and study automorphism groups.
\subsection{Plan of the article}
Section \ref{sec:prelims} introduces the necessary background for Thompson's groups and Jones' technology. In Section \ref{sec:characteristic} we analyse the structure of isomorphisms between semidirect products $K \rtimes V$ coming from group isomorphisms $R: K \to K^2$, the main result being Theorem \ref{thm:isodecomp}. We conclude Section \ref{sec:characteristic} by constructing obvious isomorphisms between these groups.

In Subsection \ref{subsec:unipropfunc} we define the groups $G(\omega) = K(\omega) \rtimes V$ coming from group morphisms $\omega: \Gamma^2 \to \Gamma$, show that this construction is functorial, and give sufficient conditions for isomorphism, extending the results of Subsection \ref{subsec:obviousiso}. In Subsection \ref{subsec:wreathproducts}, we show that for certain morphisms $\omega$, the group $G(\omega)$ is an unrestricted twisted wreath product, and classify these wreath products up to isomorphism using Theorem \ref{thm:isodecomp}. Section \ref{sec:wreath} concludes with an extension of this classification to a larger class of morphisms $\omega$ in Subsection \ref{subsec:endtoaut}. 

Section \ref{sec:automorphisms} features a decomposition of the automorphism groups of these wreath products that are untwisted.

\newpage
\begin{center}{\textbf{Acknowledgements}}\end{center}
I thank Arnaud Brothier for the many meetings and invaluable feedback on drafts. I also thank Ian doust for his writing course and helpful comments on an early version of this article. I am grateful for useful exchanges with Ryan Seelig and Dilshan Wijesena, and for Renee Lim's proofreading. Most diagrams were made using TikZit, and I thank the contributors for developing this incredible software.

\section{Preliminaries}\label{sec:prelims}
Jones' technology takes a functor $\Phi : \catname{C} \to \catname{D}$, and produces a functor $\widehat{\Phi}: \fg{\catname{C}} \to \catname{D}$. Here $\fg{\catname{C}}$ is a groupoid formed by formally inverting the morphisms in $\catname{C}$, called the localisation of \cat{C}. The functor $\widehat{\Phi}$ then restricts to representations of the automorphism groups at the objects of $\fg{\catname{C}}$. Ideally, these automorphism groups are interesting (e.g. Thompson's groups), and functors that produce interesting representations are easy to construct.

The main points we want to illustrate are:
\begin{itemize}
    \item if \cat{C} = \cat{F}, the category of binary forests, and \cat{D} is a monoidal category, a monoidal functor $\cat{C} \to \cat{D}$ is encoded by a morphism $\alpha: a \to a \otimes a$ or $\omega: a \otimes a \to a$ in \cat{D}. 
    \item In this context, Jones' technology outputs the best approximation to such a morphism by an isomorphism $R: K \xrightarrow{\sim} K \otimes K$ in \cat{D}.
    \item The result is a representation of Thompson's group $F$ in \cat{D}, which has a straightforward description in terms of the isomorphism $R$. If \cat{D} is symmetric monoidal, this representation can be extended to Thompson's group $V$.
\end{itemize} 
\subsection{Localisation}
\begin{definition}\label{def:localisation}
A \textit{localisation} of a category \cat{C} is a groupoid $\fg{\catname{C}}$ and a functor $Q: \catname{C} \to \fg{\catname{C}}$, such that precomposition with $Q$ establishes a bijection between functors $\fg{\catname{C}} \to \catname{D}$, and functors $\catname{C} \to \catname{D}$ that send morphisms of $\catname{C}$ to isomorphisms.
\end{definition}
A localisation $(\fg{\cat{C}}, Q)$ of a category $\cat{C}$ always exists, and is unique up to isomorphism by the universal property of Definition \ref{def:localisation}. The following construction is due to Gabriel and Zisman \cite{gz}. The objects of $\fg{\cat{C}}$ are the objects of $\cat{C}$, and the morphisms of $\fg{\cat{C}}$ are equivalence classes of zigzags, which are diagrams in \catname{C} that look something like \[ \xrightarrow{f_1} \ \xleftarrow{f_2} \ \xrightarrow{f_3} \ \xrightarrow{f_4} \  \xleftarrow{f_5}. \] Equivalence of zigzags is specified by the following relations:
\begin{itemize}
    \item  any instance of $ \xrightarrow{f} \xrightarrow{g}  $ may be replaced with $ \xrightarrow{gf} $, and $ \xleftarrow{g}  \xleftarrow{f} $ with $ \xleftarrow{gf}$, for all composable morphisms $f$ and $g$ in \cat{C},
    \item $ \xleftarrow{f}  \xrightarrow{f} $ and $ \xrightarrow{f} \xleftarrow{f} $ may be removed for all morphisms $f$ of $\catname{C}$,
    \item identity morphisms pointing left or right may be removed.
\end{itemize} Composition is given by concatenation. The required functor $Q: \cat{C} \to \fg{\cat{C}}$ is the identity on objects, and maps a morphism $f$ of $\catname{C}$ to the equivalence class of the zigzag $ \xrightarrow{f}$. The inverse of a morphism in $\fg{\cat{C}}$ is obtained by reversing the order and direction of the arrows. In particular, for each morphism $f$ in $\cat{C},$ the equivalence class of the zigzag $\xleftarrow{f}$ is a formal inverse to $f$. 
    \begin{example}\label{eg:freegrp} If $M$ is the free monoid on an alphabet $X$, regarded as a category with one object, then $\fg{M}$ is the free group on $X$.
    \end{example}
    \begin{example}
    If $\cat{C}$ is already a groupoid, then $\fg{\cat{C}} \cong \cat{C},$ since $(\catname{C}, \mathrm{id}_C)$ is a localisation for $\cat{C}.$ 
    \end{example}
    \begin{remark}
    The localisation of a category $\catname{C}$ with respect to a subcategory $\cat{W}$ is constructed similarly; only morphisms from $\cat{W}$ point to the left. As a result, the localisation may not be a groupoid.
    \end{remark}
\subsection{Calculus of fractions}
Example \ref{eg:freegrp} shows that a morphism in $\fg{\cat{C}}$ can be arbitrarily long. There are conditions where each morphism of $\fg{\cat{C}}$ can be reduced to the form $\xrightarrow{f} \xleftarrow{g}$, thought of as a fraction $g^{-1} f$. We state the definitions and results that we need and defer to \cite{gz, fracrevisited} for detailed proofs.
    \begin{definition}\label{def:leftcalcfrac}
    A category $\catname{C}$ has a \textit{left calculus of fractions} if 
    \begin{enumerate}
        \item[(F1)] For all morphisms $f,g$ with common source, there are morphisms $p$ and $q$ such that $pf = qg$.
        \item[(F2)] If $f,g,h$ are morphisms with $gf = hf,$ there is a morphism $k$ such that $kg = kh.$
    \end{enumerate}
    \end{definition}
Fix a category $\cat{C}$ having a left calculus of fractions. A \textit{cospan} in $\cat{C}$ is a diagram of the form $\xrightarrow{f} \xleftarrow{g}$.
\begin{lemma} Define a relation $\sim$ on the cospans in $\cat{C}$ given by $\xrightarrow{f} \xleftarrow{g} \  \sim \ \xrightarrow{h} \xleftarrow{k}$ if and only if there are morphisms $p$ and $q$ such that $pf = qh$ and $pg = qk.$ Then $\sim$ is an equivalence relation.
\end{lemma}
We are now ready to give a construction of $\widehat{C}$ in terms of equivalence classes of cospans (fractions), rather than arbitrary zigzags. We will not distinguish between a cospan and its equivalence class in our notation.
\begin{proposition}
There is a category $\frc{\cat{C}}$, the fraction groupoid of \cat{C}, in which
\begin{itemize}
    \item the objects of $\frc{\cat{C}}$ are the objects of $\cat{C}.$
    \item The morphisms of $\frc{\cat{C}}$ are equivalence classes of cospans in \cat{C}, with respect to the equivalence relation $\sim$.
    \item The source of a morphism $\xrightarrow{f} \xleftarrow{g} \ \in \frc{\cat{C}}$ is the source of $f$, and the target is the source of $g$.
    \item Given morphisms $\xrightarrow{f} \xleftarrow{g}$ and $\xrightarrow{h} \xleftarrow{k}$ of $\frc{\cat{C}}$ with the source of $g$ equal to the source of $h$, the composition $(\xrightarrow{h} \xleftarrow{k})(\xrightarrow{f} \xleftarrow{g}) = \xrightarrow{pf} \xleftarrow{qk}$, where $p$ and $q$ are any morphisms such that $pg = qk.$
    \item The identity at an object $c$ of $\frc{\cat{C}}$ is $\xrightarrow{\mathrm{id}_c} \xleftarrow{\mathrm{id}_c}$.
\end{itemize}
\end{proposition}
\begin{proposition}\label{prop:locfunctorfrac}
The category $\frc{\cat{C}}$ along with the identity-on-objects functor $\cat{C} \to \frc{\cat{C}}$, mapping a morphism $f$ of $\cat{C}$ to $\xrightarrow{f}\xleftarrow{\mathrm{id}}$, is a localisation of $\cat{C}$.
\end{proposition}
\begin{notation}
    For each morphism $f \in \cat{C}$, we denote the fraction $\xrightarrow{f} \xleftarrow{\mathrm{id}} \ \in \frc{\cat{C}}$ by $f$, and the fraction $\xrightarrow{\mathrm{id}} \xleftarrow{f} \ \in \frc{\cat{C}}$ by $f^{-1}$. To avoid ambiguity, we will specify whether we are viewing $f$ and $f^{-1}$ as elements of $\cat{C}$ or $\frc{\cat{C}}$. Using this notation, we have the equality 
        \[
        \xrightarrow{f} \xleftarrow{g} \ = g^{-1} f
        \] 
    for all morphisms $\xrightarrow{f} \xleftarrow{g} \ \in \frc{\cat{C}}$.
\end{notation}
\begin{remark}
    The canonical isomorphism $ \frc{\cat{C}} \cong \fg{C}$ between the fraction groupoid $\frc{\cat{C}}$ and the Gabriel-Zisman localisation $\fg{C}$ maps a fraction $g^{-1} f$ in $\frc{\cat{C}}$ to the zigzag $\xrightarrow{g} \xleftarrow{f}$ in $\fg{C}$. Thus, conditions (F1) and (F2) of Definition \ref{def:leftcalcfrac} provide conditions for the reduction of each zigzag in $\fg{C}$ to a fraction.
\end{remark}
\begin{remark}
Reversing the arrows in Definition \ref{def:leftcalcfrac} yields the notion of a calculus of right fractions. If $\catname{D}$ is a category with a calculus of right fractions, the opposite category $\catname{D}\op$ has a calculus of left fractions. The category $\frc{\cat{D}} := \frc{\catname{D}\op}\op$ is then a localisation of $\cat{D}$. The morphisms of $\frc{\cat{D}}$ are equivalence classes of a cospans in $\cat{D}\op,$ i.e. spans in $\cat{D}$. The equivalence class of a span $\xleftarrow{f} \xrightarrow{g}$ can be thought of as a right fraction $gf^{-1}$. 
\end{remark}
\begin{example}
Suppose that $M$ is a commutative and cancellative monoid. Then $M$ has a calculus of left fractions, and the group $\frc{M}$ is the Grothendieck group of $M$. In particular, when $M = (\NN \cup \{0\}, +)$, we have $\frc{M} \cong (\ZZ, +)$, and if $M = (\ZZ \setminus \{0\}, \times)$, then $\frc{M} \cong (\QQ \setminus \{0\}, \times)$.
\end{example}
\begin{example}\label{eg:nonfaithfullocfunctor}
Consider the monoid $M = (\mathbb{N} \cup \{0\}, \times).$ Then $M$ has a calculus of left fractions, though the group $\frc{M}$ is trivial. Given a fraction $n^{-1} m \  \in \frc{M}$,
\begin{align*}
    n^{-1} m &= 0 0^{-1} n^{-1} m 0 0^{-1} \\ &= 0 \ (n0)^{-1} \ (m0) 0^{-1} \\ &= 00^{-1} 00^{-1},
\end{align*} which is the identity element of $\frc{M}$. In particular, the localisation functor $M \to \frc{M}$ is not faithful. 
\end{example}
\begin{lemma}\label{lem:faithfullocfunctor}
The localisation functor $Q: \cat{C} \to \frc{\cat{C}}$ is faithful as soon as \cat{C} is left cancellative, i.e. $fg = fh$ implies that $g = h$ for all compatible morphisms $f,g,h$ in \cat{C}.
\end{lemma}
\subsection{Thompson groupoids}\label{subsec:thompsongroupoids}
We now introduce the categories $\cat{F}$ of binary forests and \cat{SF} of symmetric forests, and see how their localisations contain Richard Thompson's groups $F$ and $V$.

The objects of the category \cat{F} are the natural numbers, and the morphisms are rooted, ordered, and planar binary forests. The source and target of a forest are its number of roots and leaves respectively. For example, the forest
\[
\tikzfig{figures/prelims/foresteg}
\] is a morphism $3 \to 6$ in $\cat{F}.$ We compose forests by stacking them vertically, so that $g f$ is the forest obtained by attaching the $i^{\text{th}}$ leaf of $f$ to the $i^{\text{th}}$ root of $g.$ E.g. if
\[
\tikzfig{figures/prelims/comp1}
\]
and
\[
\tikzfig{figures/prelims/comp2}
\] then
\[
\tikzfig{figures/prelims/comp3}
\]
A binary tree is a binary forest with one root. Let $\mathfrak{T}$ be the set of all binary trees, and for each $t \in \mathfrak{T}$, write $\Leaf(t)$ for the set of leaves of $t$ and define $\ell(t) := |\Leaf(t)|$ (the number of leaves of $t$). We identify each leaf $l \in \Leaf(t)$ with the geodesic path from the root of $t$ to $l$, written as a word in $0$ and $1$, from left to right. A $0$ means a left turn, and a $1$ means a right turn. For example, if 
\[
\tikzfig{figures/prelims/leaveseg}
\] then $\Leaf(t) = \{00, 010, 011, 10, 11\}$. We order $\Leaf(t)$ using the lexographical ordering, letting $l_t^i$ the $i^{\text{th}}$ leaf of $t.$ Sometimes we will identify $\Leaf(t)$ with the set $\{1,\dots, \ell(t)\}$ equipped with the usual linear order. Letting $\{0,1\}^*$ denote the monoid of finite words in $0$ and $1$, with $\epsilon$ denoting the empty word, the set of leaves of the tree $|$ with one root and one leaf is $\{\epsilon\}$ by convention.

Every binary forest $f$ is essentially a list $(f_1,\dots, f_n)$ of binary trees, where $n$ is the number of roots of $f$. We call $f_i$ the $i^{\text{th}}$ tree of $f$ for all $1 \leq i \leq n.$ The support of a forest is the set of indices $i$ for which $f_i$ is not the trivial tree $|$.

Introducing permutations to \cat{F} yields the category $\cat{SF}$. The objects of $\cat{SF}$ are again the natural numbers, and the morphisms are generated by the morphisms of $\catname{F}$ along with all permutations $\sigma \in S_n$ with $n \geq 1$, subject to some relations. A permutation $\sigma \in S_n$ is regarded as a morphism $n \to n,$ and is represented diagramatically as $n$ arrows joining two copies of $\{1,\dots,n\}$. For example, the diagram of the permutation $(123) \in S_3$ is
\[
\tikzfig{figures/prelims/perm1}
\] To keep our diagrams neat, we omit arrow heads and labels, instead writing
\[
\tikzfig{figures/prelims/perm2}
\] 

\pagebreak
Composition in $\catname{SF}$ is again vertical stacking and isotopy, subject to the relations:
\begin{itemize}
    \item permutations are composed in the usual way, and the identity of $S_n$ is the identity at the object $n \in \catname{SF}$,
    \item if $f: m \to n$ is a forest and $\sigma \in S_m,$ then $f \sigma = S(f,\sigma) \sigma(f),$ where $\sigma(f)$ is the binary forest satisfying $\sigma(f)_i := f_{\sigma(i)}$, and $S(f,\sigma) \in S_n$ is the permutation on $n$ letters obtained by replacing the $i^{\text{th}}$ line segment of $\sigma$ with $\ell(\sigma(f)_i)$ parallel line segments.
\end{itemize} 
The second kind of relation above specifies that a tree can slide down a line segment attached to its root, e.g. if
\[
\tikzfig{figures/prelims/sfsigmapre1}
\]
and $\sigma = (123) \in S_3,$ then
\[
\tikzfig{figures/prelims/sfsigma}
\]
These relations for the category \cat{SF} are those of a Brin-Zappa-Sz\'{e}p product of the category of binary forests \cat{F} and the permutation groupoid $\sqcup_{n} S_n$ (see \cite{brinstrand, wz, brinzappaszep} for more on this). Note that there is an identity-on-objects embedding $\cat{F} \hookrightarrow \cat{SF}$ mapping a forest $f \in \cat{F}$ to the symmetric forest $\sigma f$, where $\sigma$ is an identity permutation. This embedding identifies \cat{F} with the subcategory of \cat{SF} given by morphisms whose diagrams are planar.

We will now examine the fraction groupoids of \cat{F} and \cat{SF}. For each 
$n \geq 0$, let $t_n$ be the regular tree with $2^n$ leaves. Given forests $f,g \in \cat{F},$ we may find forests $p,q$ such that $pf = qg$, by completing each tree of $f$ and $g$ into a regular tree $t_n$. Thus, the category $\cat{F}$ satisfies condition $(F1)$ of Definition \ref{def:leftcalcfrac}, and a similar argument shows that the category $\cat{SF}$ also satisfies condition $(F1)$. 

It is visually apparent that the category \cat{F} also satisfies the condition $(F2)$, and it quickly follows that the category $\cat{SF}$ does as well. Thus, \cat{F} and \cat{SF} both admit a calculus of left fractions, yielding the fraction groupoids $\frc{\cat{F}}$ and $\frc{\cat{SF}}$. By Lemma \ref{lem:faithfullocfunctor}, the localisation functors $\cat{F} \to \frc{\cat{F}}$ and $\cat{SF} \to \frc{\cat{SF}}$ are faithful. These functors fit into a commutative diagram of embeddings
\[ 
\begin{tikzcd}
\cat{F} \arrow[rr, hook] \arrow[d, hook] &  & \cat{SF} \arrow[d, hook] \\
\frc{\cat{F}} \arrow[rr, hook]           &  & \frc{\cat{SF}}          
\end{tikzcd}.
\]
The embedding $\frc{\cat{F}} \hookrightarrow \frc{\cat{SF}}$ is the identity on objects, and maps a fraction $g^{-1} f \in \frc{\cat{F}}$ to the fraction $(\sigma g)^{-1} (\sigma f) \in \frc{\cat{SF}}$, where $\sigma$ is an identity permutation.

The morphisms of $\frc{\cat{F}}$ and $\frc{\cat{SF}}$ admit afford a powerful diagrammatic calculus. Each fraction $(\sigma f)^{-1} (\tau g)$ in $\frc{\cat{SF}}$ may be interpreted diagramatically as the diagram of $\sigma f$ placed upside down on top of the diagram of $\tau g$. For example, if
\[
\tikzfig{figures/prelims/strand1}
\]
and 
\[
\tikzfig{figures/prelims/strand2}
\]
then $(\sigma f)^{-1} (\tau g)$ becomes
\[
\scalebox{1.3}{\tikzfig{figures/prelims/strand3}}
\]
Composition of morphisms in $\frc{\cat{SF}}$ represented this way can be performed by vertical stacking and isotopy, the relations of \cat{SF}, and the relations
\[
\tikzfig{figures/prelims/caretrelation1}
\tag{$*$}\] and 
\[ 
\tikzfig{figures/prelims/caretrelation2} \tag{$**$}
\] The relations $(*)$ and $(**)$ assert that $\left(\caret\right)^{-1} = \fcaret$. For example, if 
\[\tikzfig{figures/prelims/symmetrictree}\]
then 
\[\tikzfig{figures/prelims/symmtreeinverse}\]
as evidenced by the computations
\[\tikzfig{figures/prelims/inversecomp}\] and \[\tikzfig{figures/prelims/inversecomp1}\]
The embedding $\frc{\cat{F}} \hookrightarrow \frc{\cat{SF}}$ identifies $\frc{\cat{F}}$ with the subcategory of planar diagrams in $\frc{\cat{SF}}$. These pictures of the morphisms in $\frc{\cat{SF}}$ are called strand diagrams \cite{belkF}, used by Brin in \cite{brinstrand} to represent elements of the braided Thompson group $BV$.

Strand diagrams with one vertex at the top and bottom, i.e. morphisms $1 \to 1$ in $\frc{\cat{SF}}$, form a group which is well-known to be isomorphic to Thompson's group $V$ \cite{belkF, brinstrand}. We take this to be our definition of Thompson's group $V$. Thompson's group $F$ consists of the planar diagrams in $V$.
\begin{definition}\label{def:thompsonsgroups}
    Define $V := \frc{\cat{SF}}(1,1)$ and $F := \frc{\cat{F}}(1,1)$.
\end{definition}
Each element $v \in V$ can be written as a fraction $(\tau s)^{-1} (\sigma t)$ for some trees $t,s$ with a common number of leaves $n$, and $\sigma, \tau \in S_n$, however we can write $v = s^{-1} \tau^{-1} \sigma t$, so that every element of $V$ is specified by two trees and a permutation. Restricting the embedding $\frc{\cat{F}} \hookrightarrow \frc{\cat{SF}}$ identifies $F$ with the subgroup of $V$ consisting of fractions $s^{-1} \sigma t$, where $\sigma$ is the identity permutation.

Since for each $m,n \in \NN$ there is a forest with $m$ roots and $n$ leaves, the groupoids $\frc{\cat{F}}$ and $\frc{\cat{SF}}$ are connected, and so they are each equivalent to their automorphism groups at the object $1 \in \NN$.
\subsection{Jones' technology}\label{subsec:jonesiso}
The category $\cat{SF}$ has a monoidal structure $\otimes$ given by addition of natural numbers, and horizontal concatenation of symmetric forests. For example,
\[
\tikzfig{figures/prelims/tensor}
\]
The monoidal structure of \cat{SF} restricts to a monoidal structure on \cat{F}. These lift to monoidal structures on the groupoids $\frc{\cat{F}}$ and $\frc{\cat{SF}}$ via the formula $g^{-1} f \otimes k^{-1} h := (g \otimes k)^{-1} (f \otimes g)$, for all suitable symmetric forests $f,g,h,k \in \cat{SF}$. In terms of strand diagrams, the tensor product $\otimes$ is again horizontal concatenation of diagrams.

Every planar forest in \cat{F} can be written as a composition of elementary forests \[ f_{k,n} := |^ {\otimes(k-1)} \ \otimes \ \caret \ \otimes \ |^{\otimes (n - k)},\] for example \[\tikzfig{figures/prelims/elementary}\] Thus, the morphisms of $\frc{\cat{F}}$ are generated by $\caret$ with respect to the operations of composition, $\otimes$, and $(-)^{-1}.$ Since every permutation $\sigma$ of a finite set can be written as a product of transpositions, the morphisms of $\frc{\cat{SF}}$ are generated by $\caret$ and the transposition $\sigma \in S_2$ as a monoidal groupoid. Thus, if $(\cat{D}, \otimes)$ is a monoidal category, then
\begin{itemize}
    \item any morphism of the form $\alpha: a \to a \otimes a$ yields a covariant monoidal functor $\Phi_\alpha : \cat{F} \to \cat{D},$  defined by $\Phi_\alpha(n) := a^{\otimes n}$ for all $n \in \NN$ and $\Phi_\alpha(\caret) := \alpha$. Similarly, a morphism $\omega: a \otimes a \to a \in \cat{D}$ defines a contravariant monoidal functor $\Phi_\omega: \cat{F} \to \cat{D}$ defined by $\Phi_\omega(\caret) := \omega$.
    \item If $R: K \to K \otimes K$ is an isomorphism in \cat{D}, $\Phi_R$ can be extended to a functor $\pi_R: \frc{\cat{F}} \to \cat{D}$ by setting $\pi_R(f^{-1}) := \Phi_R(f)^{-1}$ for all forests $f$.
    \item If $\cat{D}$ is symmetric monoidal, we may extend $\Phi_\alpha, \Phi_\omega$ to functors $\cat{SF} \to \cat{D}$, and extend $\Phi_R$ to a functor $ \pi_R: \frc{\cat{SF}} \to \cat{D}$. This is done by mapping the transposition $\sigma \in S_2$ to the canonical isomorphisms $a \otimes a \xrightarrow{\sim} a \otimes a$ for $\Phi_\alpha$ and $\Phi_\omega$, and $K \otimes K \xrightarrow{\sim} K \otimes K$ for $\pi_R$, using the symmetric structure of $\cat{D}$. Sometimes we will denote $\pi_R$ by $\Phi_R$.
\item Thus, if \cat{D} has a symmetric structure and $\sigma \in S_n$, $\Phi_\alpha(\sigma): a^{\otimes n} \to a^{\otimes n}$ is given by the action $S_n \curvearrowright a^{\otimes n}$, and $\Phi_\omega(\sigma) = \Phi_\alpha(\sigma)^{-1}$. Since $\pi_R :\frc{\cat{SF}} \to \cat{D}$ is covariant, $\pi_R(\sigma): K^{\otimes n} \to K^{\otimes n}$ is given by the action $S_n \curvearrowright K^{\otimes n}$.
\end{itemize}
\begin{example}
    If $(\cat{D}, \otimes) = (\cat{Set}, \times)$, the category of sets along with the cartesian product, $\alpha: X \to X \times X$ is a map of sets, and $\sigma \in S_n$, then $\Phi_\alpha(\sigma) : X^n \to X^n, (x_i) \mapsto (x_{\sigma^{-1}i})$.
\end{example}
\begin{remark}
    Given a morphism $\alpha: a \to a \otimes a$ in a monoidal category \cat{D}, the functor $\Phi_\alpha: \cat{F} \to \cat{D}$ has a nice visual description in terms of well-known graphical calculus for monoidal categories \cite{tensorcat}. Representing the morphism $\alpha$ diagramatically as a trivalent vertex
    \[
    \tikzfig{figures/prelims/trivalent}
    \]
    the functor $\Phi_\alpha: \cat{F} \to \cat{D}$ is given by replacing every instance of a caret $\caret$ with the trivalent vertex corresponding to $\alpha$. For example,
    \[
    \tikzfig{figures/prelims/trivalentmap}
    \]
   The functor $\Phi_\omega$ corresponding to a morphism $\omega: a \otimes a \to a$ in \cat{D} can be visualised similarly.
   
   If $R : K \to K \otimes K$ is an isomorphism in \cat{D}, we can represent $R^{-1}$ diagramatically as a $180$ degree rotation of the trivalent vertex corresponding to $R$:
    \[
    \tikzfig{figures/prelims/trivalentinverse}
    \]
    so that
    \[
    \tikzfig{figures/prelims/trivalentinversedemo1} \tag{I}
    \]
    and
    \[
    \tikzfig{figures/prelims/trivalentinversedemo2} \tag{II}
    \]
    Extending $\Phi_R$ to a functor $\pi_R: \frc{\cat{F}} \to \cat{D}$ and then restricting this functor to the group $F := \frc{\cat{F}}(1,1)$ yields a representation of $F$ in $\cat{D}$, for example
    \[
    \tikzfig{figures/prelims/trivalentFmap}
    \]
    If $\cat{D}$ is symmetric monoidal, we can extend this representation to the group $V$.

    For many choices of \cat{D}, Jones' technology mechanically assigns each morphism $\alpha: a \to a \otimes a$ or $\omega: a \otimes a \to a$ in \cat{D} an isomorphism $R: K \to K \otimes K$ via a colimit/limit construction, respectively (Remark \ref{rmk:constructionjonesiso}). I.e., any trivalent vertex $\alpha$ or $\omega$ is replaced by a trivalent vertex satisfying relations (I) and (II), in a canonical way, obtaining representations of Thompson's groups from easy data.
\end{remark}
\begin{example}\label{eg:vcantoraction}
    Let $\cat{D} := \cat{Set},$ $\otimes := \sqcup,$ and $\cantor := \{0,1\}^\NN,$ the Cantor space of infinite sequences in $0$ and $1.$ Consider the bijection $R: \mathfrak{C} \to \mathfrak{C} \sqcup \mathfrak{C}$ defined by the formula 
        \[ 
        x_1 x_2 x_3 \cdots \mapsto(x_1, x_2 x_3 \cdots).
        \] 
    This yields a monoidal functor $\Phi: \frc{\cat{SF}} \to \cat{Set}$ defined by $\Phi(\caret) := R$ which affords the following visual description. We view $\cantor$ as the boundary $\partial t_\infty$ of the rooted infinite regular binary tree $t_\infty.$ Thus, an element of $\cantor$ is viewed as an upwards path in $t_\infty$ consisting of left turns and right turns, where we use the convention that a left turn corresponds to $0$ and a right turn corresponds to $1$. For example, 
    \[\tikzfig{figures/prelims/cantorpath}\]
    The bijection $R$ is then given by deleting a caret $\caret$ at the bottom of diagrams in $\cantor$ \[ \tikzfig{figures/prelims/cantorbij}\] with $R^{-1}$ given by placing a caret at the bottom of a diagram in $\cantor \sqcup \cantor$ and joining the path to the root. Since $\Phi$ is a monoidal functor, given a tree $t \in \mathfrak{T},$ we have that $\Phi(t)$ is given by removing a copy of $t$ at the bottom of diagrams in $\cantor$. If $\sigma \in S_n$ is a permutation, then $\Phi(\sigma): \cantor^{\sqcup n} \to \cantor^{\sqcup n}$ is given by permuting copies of $\cantor$.
\end{example}
\begin{definition}\label{def:jonesiso}
Let \cat{D} be a monoidal category, and suppose that $\alpha: a \to a \otimes a$ is a morphism in \cat{D}. The \textit{Jones isomorphism} of $\alpha$ consists of an object $\joneso{\alpha} \in \cat{D},$ a morphism $\iota(\alpha): a \to \joneso{\alpha},$ and an isomorphism $\jonesm{\alpha}: \joneso{\alpha} \to \joneso{\alpha} \otimes \joneso{\alpha}$ making the diagram
\[
    \begin{tikzcd}
    K(\alpha) \arrow[r, "R(\alpha)"]                  & K(\alpha) \otimes K(\alpha)                                   \\
    a \arrow[r, "\alpha"'] \arrow[u, "\iota(\alpha)"] & a \otimes a \arrow[u, "\iota(\alpha) \otimes \iota(\alpha)"']
    \end{tikzcd}
\]
commute, while being universal with respect to this property: if $\wt{R}: \wt{K} \to \wt{K} \otimes \wt{K}$ is an isomorphism and $j: a \to \wt{K}$ are such that the diagram
\[\begin{tikzcd}
	{\wt{K}} & {\wt{K} \otimes \wt{K}} \\
	a & {a \otimes a}
	\arrow["{\wt{R}}", from=1-1, to=1-2]
	\arrow["\alpha"', from=2-1, to=2-2]
	\arrow["j", from=2-1, to=1-1]
	\arrow["{j \otimes j}"', from=2-2, to=1-2]
\end{tikzcd}\]
commutes, then there is a unique morphism $\psi: K(\alpha) \to \wt{K}$ making the diagram
\[\begin{tikzcd}
	{\wt{K}} && {\wt{K} \otimes \wt{K}} \\
	\\
	{K(\alpha)} && {K(\alpha) \otimes K(\alpha)} \\
	\\
	a && {a \otimes a}
	\arrow["{\iota(\alpha)}", from=5-1, to=3-1]
	\arrow["{\iota(\alpha) \otimes \iota(\alpha)}", from=5-3, to=3-3]
	\arrow["\alpha"', from=5-1, to=5-3]
	\arrow["{R(\alpha)}", from=3-1, to=3-3]
	\arrow["\psi", from=3-1, to=1-1, dashed]
	\arrow["{\psi \otimes \psi}"', from=3-3, to=1-3]
	\arrow["{\wt{R}}", from=1-1, to=1-3]
	\arrow["j", bend left=60, from=5-1, to=1-1]
	\arrow["{j \otimes j}"', bend right=60, from=5-3, to=1-3]
\end{tikzcd}
\] commute. Let $\pi(\alpha): \frc{\cat{F}} \to \cat{D}$ be the monoidal functor defined by $\pi(\alpha)(\caret) := R(\alpha)$, extending $\pi(\alpha)$ to a functor $\frc{\cat{SF}} \to \cat{D}$ if \cat{D} is symmetric monoidal.

The Jones isomorphism of a morphism $\omega: a \otimes a \to a$ in \cat{D} consists of an object $\joneso{\omega} \in \cat{D},$ a morphism $\prj{\omega}: \joneso{\omega} \to a$, and an isomorphism $\jonesm{\omega}: \joneso{\omega} \xrightarrow{\sim} \joneso{\omega} \otimes \joneso{\omega}$ making the diagram
\[
\begin{tikzcd}
K(\omega) \arrow[r, "R(\omega)"] \arrow[d, "p(\omega)"'] & K(\omega) \otimes K(\omega) \arrow[d, "p(\omega) \otimes p(\omega)"] \\
a                                                        & a \otimes a \arrow[l, "\omega"]                                               
\end{tikzcd}
\]
commute in a universal way: if $\wt{R}: \wt{K} \to \wt{K} \otimes \wt{K}$ and $q: \wt{K} \to a$ make the diagram 
\[\begin{tikzcd}
	{\wt{K}} & {\wt{K} \otimes \wt{K}} \\
	a & {a \otimes a}
	\arrow["q"', from=1-1, to=2-1]
	\arrow["{q \otimes q}", from=1-2, to=2-2]
	\arrow["{\wt{R}}", from=1-1, to=1-2]
	\arrow["\omega", from=2-2, to=2-1]
\end{tikzcd}\]
commute, then there is a unique morphism $\psi: \wt{K} \to K(\omega)$ making the diagram 
\[\begin{tikzcd}
	{\wt{K}} && {\wt{K} \otimes \wt{K}} \\
	\\
	{K(\omega)} && {K(\omega) \otimes K(\omega)} \\
	\\
	a && {a \otimes a}
	\arrow["{p(\omega)}"', from=3-1, to=5-1]
	\arrow["{p(\omega) \otimes p(\omega)}"', from=3-3, to=5-3]
	\arrow["{R(\omega)}", from=3-1, to=3-3]
	\arrow["\omega", from=5-3, to=5-1]
	\arrow["{\wt{R}}", from=1-1, to=1-3]
	\arrow["\psi"', dashed, from=1-1, to=3-1]
	\arrow["{\psi \otimes \psi}", from=1-3, to=3-3]
	\arrow["q"', bend right =60, from=1-1, to=5-1]
	\arrow["{q \otimes q}", bend left = 60, from=1-3, to=5-3]
\end{tikzcd}\]
commute. 

Denote by $\pi(\omega)$ the monoidal functor $\frc{\cat{F}} \to \cat{D}$ defined by $\pi(\omega)(\caret) := \jonesm{\omega}$, again extending to a functor $\frc{\cat{SF}} \to \cat{D}$ if \cat{D} is symmetric monoidal. We omit the dependence on morphisms in the notation for Jones isomorphisms, for example writing $K, R, p, \pi$ instead of $K(\omega), R(\omega), p(\omega), \pi(\omega)$, when possible.
\end{definition}
\begin{remark}
If \cat{E} is a category and $e$ is an object of $\cat{E}$, denote by $e / \cat{E}$ the category whose objects are morphisms $e \to x$ with $x$ an object of $\cat{E}$, a morphism $(e \to x) \to (e \to y)$ being a morphism $x \to y$ making the relevant diagram commute. The category $\cat{E}/e$ is defined similarly, with objects being morphisms $x \to e$.

Let \cat{D} be a monoidal category, and \cat{M} be the category whose objects are morphisms in \cat{D} of the form $\alpha: a \to a \otimes a$ or $\omega: a \otimes a \to a$, and whose morphisms are commutative diagrams such as
\[\begin{tikzcd}
	a & {a \otimes a} \\
	{\wt{a}} & {\wt{a} \otimes \wt{a}}
	\arrow["\alpha", from=1-1, to=1-2]
	\arrow["{\wt{\omega}}", from=2-2, to=2-1]
	\arrow["\psi"', from=1-1, to=2-1]
	\arrow["{\psi \otimes \psi}", from=1-2, to=2-2]
\end{tikzcd} \ . \tag{$*$}\]
The diagram $(*)$ is a morphism $\alpha \to \wt{\omega}$ in $\cat{M},$ and the remaining morphisms in \cat{M} are obtained via changing the directions of the horizontal arrows in $(*).$ Let \cat{I} be the full subcategory of \cat{M} whose objects are the isomorphisms of the form $K \xrightarrow{\sim} K \otimes K$. We may then rephrase the universal properties of Definition \ref{def:jonesiso} as follows: given an object $\alpha: a \to a \otimes a$ in \cat{M}, a Jones isomorphism of $\alpha$ if it exists is an initial object of $\alpha / \cat{I},$ and is thus defined up to unique isomorphism in $\alpha / \cat{I}$. Similarly, if $\omega: a \otimes a \to a \in \cat{M},$ a Jones isomorphism of $\omega$ is a terminal object in $\cat{I} / \omega.$ This describes Jones isomorphisms as Kan extensions of their underlying monoidal functors $\cat{F} \to \cat{D}$, along the localisation functor $\cat{F} \to \frc{\cat{F}}$.
\end{remark}
\begin{example}\label{eg:cantordyadicjonesiso}
    Let $\cat{D} := \cat{Set}$ and $\otimes := \sqcup$, the disjoint union of sets. Consider a singleton set $\{\bullet\}$, the unique map $\omega : \{\bullet\} \sqcup \{\bullet\} \to \{\bullet\}$, and let $\alpha: \{\bullet\} \to \{\bullet\} \sqcup \{\bullet\}, \bullet \mapsto (0, \bullet)$. Then the bijection $R: \cantor \to \cantor \sqcup \cantor$ of Example \ref{eg:vcantoraction} along with the unique map $\cantor \to \{\bullet\}$ is the Jones isomorphism for $\omega$. Restricting $R$ to $ \QQ_2 := \{0,1\}^{(\NN)}$, the finitely supported elements of $\cantor$, recovers the Jones isomorphism of $\alpha$ (the required map $\{\bullet\} \to \QQ_2$ maps $\bullet \mapsto 00\cdots$).
\end{example}
\begin{remark}\label{rmk:constructionjonesiso}
    When a monoidal category \cat{D} has enough colimits and limits (and under a couple of technical assumptions), Jones isomorphisms for morphisms $\alpha: a \to a \otimes a$ and $\omega: a \otimes a \to a$ in \cat{D} (i.e. monoidal functors $\cat{F} \to \cat{D}$) are certain direct limits and inverse limits in \cat{D} over the directed set $\mathfrak{T}$ of binary trees, respectively. This is true for all of the cases we are interested in, including Example \ref{eg:cantordyadicjonesiso}. These are special cases of colimit/limit constructions for arbitrary functors $\Phi: \cat{C} \to \cat{D}$, that lead to actions of $\frc{\cat{C}}$ when $\cat{C}$ is a category with a calculus of fractions. See \cite{nogo} for the colimit construction and \cite{bsgauge} for the limit construction when \cat{D} is a concrete category. As such, the group $K(\omega)$ (Definition \ref{def:komega}) assigned to a group morphism $\omega: \Gamma \times \Gamma \to \Gamma$ is secretely an inverse limit over the set of trees $\mathfrak{T}$.
\end{remark}
\subsection{Thompson's groups}
 Recall the bijection $R: \cantor \to \cantor \sqcup \cantor$ of Example \ref{eg:vcantoraction}, and the resulting functor $\Phi: \frc{\cat{SF}} \to \cat{Set}$ defined by $\Phi(\caret) := R$. We then obtain an action $V \curvearrowright \cantor$ by restricting $\Phi$ to $V$ (Definition \ref{def:thompsonsgroups}).
\begin{lemma}\label{lem:prefixswap}
    Suppose that $v = s^{-1} \sigma t \in V, \ 1 \leq i \leq \ell(t),$ and $x \in \cantor.$ Then \[ \Phi(v)( l_t^i x) = l_s^{\sigma(i)} x.\] I.e., $V$ acts on $\cantor$ via prefix replacement.
\end{lemma}
\begin{proof}
    This follows from the visual description of the functor $\Phi$ given in \ref{eg:cantordyadicjonesiso}. We have that \[\Phi(t)(l_t^i x) = (i, x) \in \cantor^{\sqcup \ell(t)}\] since $\Phi(t)(l_t^i x)$ is given by removing a copy of $t$ from the bottom of the diagram corresponding to $l_t^i x.$ Thus, \[\Phi(\sigma t)(l_t^i x) = (\sigma(i), x), \] and so \[ \Phi(s^{-1} \sigma t)(l_t^i) = l_s^{\sigma(i)} x,\] since $\Phi(s)^{-1}$ is given by placing a copy of $s$ at the bottom of diagrams in $\cantor^{\sqcup \ell(s)}.$
\end{proof} An immediate consequence of Lemma \ref{lem:prefixswap} is that the action $V \curvearrowright \cantor$ is faithful. If $v = s^{-1} \sigma t \in V$ acts trivially on $\cantor,$ then for all $1 \leq i \leq \ell(t)$ we have that \[\Phi(v)(l_t^i 00 \cdots) = l_s^{\sigma(i)} 00 \cdots = l_t^{i} 00 \cdots, \] and so 
\[ l_t^i = l_{s}^{\sigma(i)}. \tag{$*$} \] This implies that $\Leaf(t) = \Leaf(s),$ and so $t = s.$ But then $(*)$ implies that $\sigma = \mathrm{id}$, so $v = e_V.$

We only needed the fact that $\Phi(v)$ acts trivially on $\QQ_2 := \{0,1\}^{(\NN)}$, so we also get that the action $V \curvearrowright \QQ_2$ is faithful.
\begin{definition}
    Given a word $u \in \{0,1\}^*,$ define \[ \cantor_u := \{u \cdot x \ | \ x \in \cantor\}.\] Such a subset of $\cantor$ is called a \textit{standard dyadic interval}, or sdi for short. The $\cantor_u, \ u \in \{0,1\}^*$ are clopen subsets of $\cantor$ forming a basis for the product topology on $\cantor$ after equipping $\{0,1\}$ with the discrete topology. A standard dyadic partition, or sdp, is a finite partition of $\cantor$ into standard dyadic intervals. Given an sdi $I = \cantor_u,$ we denote by $I_0$ and $I_1$ the first and second halves of $I,$ i.e. \[I_0 := \cantor_{u0}, \ I_1 := \cantor_{u1},\] so that $I = I_0 \sqcup I_1$.
\end{definition}
Every tree $t \in \mathfrak{T}$ yields an sdp $\{\cantor_{l}\}_{l \in \Leaf(t)}$ of $\cantor,$ and so the formula of Lemma \ref{lem:prefixswap} completely specifies the map $\Phi(v)$ for each $v \in V.$ Indeed, $\Phi(v)$ is a homeomorphism of $\cantor$ for each $v \in V,$ and since the action $V \curvearrowright \cantor$ is faithful, we may view $V$ as a subgroup of $\text{Homeo}(\cantor)$, the group of homeomorphisms of $\cantor$. As such, we write $v(x)$ instead of $\Phi(v)(x)$ for all $v \in V$ and $x \in \cantor$.
\begin{definition}\label{def:slope}
    Suppose that $v = s^{-1} \sigma t \in V$ and that $x \in \cantor,$ and write $x = l_t^i y$ for some unique $1 \leq i \leq \ell(t)$ and $y \in \cantor.$ Then $v(x) = l_s^{\sigma(i)} y,$ and we define the slope of $v$ at $x,$ denoted by $v'(x)$ to be \[ v'(x) := \frac{2^{|l_t^i|}}{2^{|l_s^{\sigma(i)}|}}. \]
\end{definition}
\begin{remark}
    Consider the binary expansion map $S: \cantor \to [0,1]$ defined by the formula \[S(x) = \sum_{k=1}^\infty \frac{x_k}{2^k}, \ x \in \cantor.\] Given an sdi $I = \cantor_u,$ a measure of the length of $I$ is the quantity $S(u11\cdots) - S(u00\cdots) = 2^{-|u|}.$ Thus, if $v \in V,$ the slope of $v$ at $x \in \cantor$ is the ratio of the length of the sdi $\cantor_l, \ l \in \Leaf(s)$ containing $v(x)$, to the length of the sdi $\cantor_{\sigma^{-1}(l)}$ containing $x$.

    The map $S$ restricts to a bijection between the elements of $\cantor$ with infinitely many zeros and the unit interval $[0,1) \subseteq \RR$. If $v \in V$ and $x \in \cantor$ has infinitely many zeros, so does $v(x)$, so the map $S$ establishes an isomorphism between $V$ and a subgroup of the piecewise linear right-continuous bijections of the unit interval $[0,1),$ which is essentially the definition of $V$ given in \cite{cfp}. For example, the element
    \[
    \tikzfig{figures/prelims/velement}
    \] corresponds to the bijection
    \[
    \tikzfig{figures/prelims/01bijn}
    \]
    of $[0,1)$, with a black dots emphasising that a point is on the graph, and white dot emphasising that a point is not on the graph. Note that the tree at the bottom of the diagram of the element of $V$ corresponds to the domain (horizontal axis), and the tree on the top corresponds to the range (vertical axis).
    
    The slope $v'(x)$ with $v \in V$ and $x \in \cantor \setminus \{11\cdots\}$ is then equal to the right-slope of the graph of $v$ at $S(x),$ and $v'(11\cdots)$ is the limit of the slope of the graph of $v$ at $x \in [0,1)$ as $x \to 1$.
\end{remark}
Given $x \in \cantor,$ one can use the definition of slopes to show that the map $V \to 2^{\ZZ}, v \mapsto v'(x)$ satisfies the chain rule, and we will use this fact freely. 
    \begin{definition}\label{def:nvcstabn}
        Viewing Thompson's group $V$ as a subgroup of $\mathrm{Homeo}(\cantor)$, and identifying the dyadic rationals $\QQ_2$ with the finitely supported sequences in $\cantor$, define 
            \[
            \nvc := \{ \varphi \in \text{Homeo}(\cantor) \ | \ \ad_{\varphi}(v) := \varphi v \varphi^{-1} \in V\}
            \]
            and
            \[
            \stabn := \{\varphi \in \nvc \ | \ \varphi(\QQ_2) = \QQ_2\}.
            \]
    \end{definition}
The following proposition is crucial for our main results, and follows from a theorem of Rubin \cite{rub96}, the details of which can be found in \cite[Section 3]{chameleon}.
    \begin{proposition}\label{prop:autv}
        The map \[\nvc \to \aut(V), \varphi \mapsto \ad_\varphi: v \mapsto \varphi v \varphi^{-1}\] is an isomorphism of groups.
    \end{proposition}
We will also need the following facts about elements of $\nvc$ and $\stabn$, both of which are proved in \cite[Section 1]{jonestechI}.
\begin{lemma}\label{lem:phisdi}
    Suppose that $\varphi \in \nvc$ and that $I$ is an sdi. Then $\varphi(I)$ is a finite union of sdis.
\end{lemma}
\begin{proposition}\label{prop:homeoslope}
    Let $v \in V, \ x \in \cantor$ with $v(x) = x,$ and suppose that $\varphi \in \stabn.$ Then \[(ad_\varphi(v))'(\varphi(x)) = v'(x).\]
\end{proposition}
\begin{remark}
    The formula of Proposition \ref{prop:homeoslope} follows immediately from the chain rule in the case that $\varphi \in V$. This formula no longer holds in general when $\varphi \notin \stabn$ \cite[Remark 1.6]{jonestechI}, but can be generalised to accomodate any homeomorphism $\varphi \in \nvc$ \cite[Proposition 1.2]{jonestechII}.
\end{remark}

\section{Groups constructed from a Jones action}\label{sec:characteristic}
In this section we study the properties of groups $G = K \rtimes V$ coming from groups $K$ with an isomorphism $R: K \to K \times K$, focusing on isomorphisms between such groups $G$.
\begin{notation}\label{not:Rsemidirect}
If $R: K \to K \times K$ is an isomorphism of groups, let $\pi(R): \frc{\cat{SF}} \to \cat{Grp}$ the resulting functor defined by $\pi(R)(\caret) := R$. Restricting $\pi(R)$ to Thompson's group $V$ yields an action $V \curvearrowright K$, and thus a semidirect product $G(R) := K \rtimes V$. We will often drop the dependence on $R$ in our notations, writing $G$ instead of $G(R)$ and $\pi$ instead of $\pi(R)$ if the context allows.

When considering two group isomorphisms $R: K \to K^2$ and $\wt{R}: \wt{K} \to \wt{K}^2,$ we will often write $\wt{G}$ and $\wt{\pi}$ instead of $G(\wt{R})$ and $\pi(\wt{R})$.
\end{notation}
\begin{definition}\label{def:gensupp}
Suppose that $\Gamma$ is a group, and that $\alpha: \Gamma \to \Gamma^2, g \mapsto (\alpha_0(g), \alpha_1(g))$ is a group morphism. For each word $u = u_0 u_1 \cdots u_m \in \{0,1\}^*,$ define $\alpha_u := \alpha_{u_m} \circ \cdots \circ \alpha_{u_0}$ (notice the reversal of the letters of $u$). We use the convention that $\alpha_\epsilon := \text{id}_\Gamma.$

For each $g \in \Gamma,$ we define 
\[
\supp_{\alpha}(g) :=  \{x = x_0x_1\cdots \in \cantor \ | \ \alpha_{x_0\cdots x_m}(g) \neq e_\Gamma \text{ for all } m \geq 0\},
\] writing $\supp(g)$ instead when the context is clear.
\end{definition}
Fix a group isomorphism $R: K \to K^2.$ We regard each $a \in K$ as a map $\{0,1\}^* \to K$ by defining $a(u) := R_u(a)$ for all $u \in \{0,1\}^*$. In particular, $a(\epsilon) = a$. The infinite regular binary tree $t_\infty$ has vertex set $\{0,1\}^*$, so the elements $a \in K$ may be visualised as decorations of $t_\infty$ as follows: 
\[
\tikzfig{figures/characteristic/decorations}
\]
The isomorphism $R: K \to K^2$ is then given by deleting a caret at the bottom the diagram of an element of $K$. I.e., for each $a \in K$, the diagram of $R(a) = (R_0(a), R_1(a))$ is
\[
\tikzfig{figures/characteristic/decorationsmap}
\]
Similarly, for each $n \geq 1,$ we identify the elements of $K^n$ with diagrams consisting of $n$ decorations of $t_\infty$ corresponding to elements of $K$ arranged next to each other. If $t \in \mathfrak{T}$ is a tree, since $R = \pi(\caret)$ and $\pi$ is a monoidal functor, we have that $\pi(t) : K \to K^{\ell(t)}$ is given by removing a copy of $t$ from the bottom of a diagram in $K.$ For example, if 
\[
\tikzfig{figures/characteristic/egtree}
\]
then 
\[
\tikzfig{figures/characteristic/treemapping}
\]
In general, if $a \in K$, $t$ is a tree and $l_1,\dots,l_n$ are the leaves of $t$, we will have that 
\[
\pi(t)(a) = (a(l_1),\dots,a(l_n)).
\]
If $\sigma \in S_n$ for some $n \in \NN,$ then $\pi(\sigma)$ permutes the positions of elements of tuples $(a_i) \in K^n,$ so that $\pi(\sigma)(a_i) = (a_{\sigma^{-1}(i)}).$ This results in a formula for the action $V \curvearrowright K$.
\begin{lemma} \label{lem:Kactionformula}
Suppose that $v = s^{-1} \sigma t \in V,$ and that $a \in K.$ Then \[\pi(v)(a)(l_s^i u) = a(l_t^{\sigma^{-1}(i)}u), \ u \in \{0,1\}^*, \ 1 \leq i \leq \ell(t). \]
\end{lemma}

Let $\Phi: \frc{\cat{SF}} \to \cat{Set}$ be the functor of Example \ref{eg:vcantoraction}, where we recall that the action $V \curvearrowright \cantor$ is obtained by restricting $\Phi$ to $V$. If $a_1,\dots,a_n \in K,$ we define \[\supp(a_1,\dots,a_n) := \bigsqcup_{i=1}^n \supp(a_i). \]
\begin{lemma}\label{lem:functorsupp}
    Suppose that $v: m \to n$ is a morphism of $\frc{\cat{SF}},$ and that $a \in K^m.$ Then \[\supp(\pi(v)(a)) = \Phi(v)(\supp(a)).\]
\end{lemma}
\begin{proof}
     The fact that \[\supp(\pi(t)(a)) = \Phi(t)(\supp(a)) \text{ for all } t \in \mathfrak{T}, \ a \in K\] and \[\supp(\pi(\sigma)(a_1,\dots,a_n)) = \Phi(\sigma)(\supp(a_1,\dots,a_n)), \ \sigma \in S_n, \ a_1,\dots,a_n \in K\] follows from the visual description of the functor $\Phi$ given in Example \ref{eg:vcantoraction}, and the visual description of $\pi$ given above. Since the functors $\Phi$ and $\pi$ are monoidal and every forest $f: m \to n$ in $\cat{F}$ can be written as a tensor product of trees, we have that \[\supp(\pi(f)(a)) = \Phi(f)(\supp(a)), \ a \in K^{m}, \tag{$*$}\] and \[\supp(\pi(f)^{-1}(a)) = \Phi(f)^{-1}(\supp(a)), \ a \in K^{\ell(f)}. \tag{$**$}\] The proof is completed by combining $(*)$ and $(**)$ with the fact that every morphism of $\frc{\cat{SF}}$ can be written in the form $f^{-1} \sigma g$ with $f,g$ forests and $\sigma$ a permutation.
\end{proof}
\begin{lemma}\label{lem:suppproperties}
For all $a,b \in K$ and $v \in V$ we have the following:
\begin{enumerate}
    \item[(1)] $a = e_K$ if and only if $\supp(a) = \emptyset$,
    \item[(2)] $\supp(a) = \supp(a^{-1})$,
    \item[(3)] $\supp(\ad_{a}(b)) = \supp(b)$,
    \item[(4)] $\supp(ab) \subseteq \supp(a) \cup \supp(b)$,
    \item[(5)] If $\supp(a)$ and $\supp(b)$ are disjoint, then $ab = ba$,
    \item[(6)] $\supp(\pi(v)(a)) = v(\supp(a))$.
\end{enumerate}
\end{lemma}
\begin{proof}
     If $a \in K$ is non-trivial, then $R_i(a) \neq e_\Gamma$ for some $i \in \{0,1\},$ since $R$ is an isomorphism. But then $R_{ij}(a) = R_j \circ R_i (a) \neq e_\Gamma$ for some $j \in \{0,1\}.$ Continuing this process indefinitely produces an element $x = x_1 x_2 \cdots \in \cantor$ with the property that $R_u(a) \neq e_K$ for all finite truncations $u = x_1 \cdots x_m$ of $x,$ and so $x \in \supp(a).$ Of course if $a = e_K$ then $\supp(a) = \emptyset,$ so we have proven $(1)$.
     
     $(2)$ through $(4)$ follow immediately from Definition \ref{def:gensupp}. If $a,b \in K,$ then $\supp([a,b]) \subseteq \supp(a) \cap \supp(b)$ from $(2),$ $(3)$ and $(4),$ so $(5)$ then follows immediately from $(1).$ $(6)$ is a special case of Lemma \ref{lem:functorsupp}.
\end{proof}
\begin{remark}\label{rmk:covariantmonoidalexample}
    Jones' technology provides many examples of semidirect products $K \rtimes V$ arising from group isomorphisms from a group $K$ to its direct square $K^2$. In \cite{jonestechI, jonestechII}, each group morphism of the form $\alpha: \Gamma \to \Gamma^2, g \mapsto (\alpha_0(g), \alpha_1(g))$ is assigned a group $K(\alpha)$ and an action $ \rho: V \curvearrowright K(\alpha)$. The elements of $K(\alpha)$ are equivalence classes $[t,g]$ of pairs $(t,g)$ with $t \in \mathfrak{T}$ a binary tree, and $g \in \Gamma^{\ell(t)}$ a tuple of $\ell(t)$ elements of $\Gamma$. Such a pair $(t,g)$ can be interpreted as a diagram consisting of the elements of the tuple $g$ decorating the leaves of $t$, for example
    \[
    \tikzfig{figures/appendix/treedec1}
    \] where each $g_i \in \Gamma$. The equivalence relation on the pairs $(t,g)$ with $t$ a binary tree and $g \in \Gamma^{\ell(t)}$ is generated by $(t,g) \sim (t', g')$, where $t'$ is a tree obtained by placing a caret on top of $t$, say on the $k^{\text{th}}$ leaf, and $g'$ is obtained by bifurcating the $k^{\text{th}}$ entry of $g$ into the pair $(\alpha_0(g_k), \alpha_1(g_k))$. We may thus interpret elements $[t,g] \in K(\alpha)$ as equivalence classes of diagrams, for example having the equality
    \[
    \tikzfig{figures/appendix/treedec2}
    \] as elements of $K(\alpha)$, where the $g_i$ are elements of $\Gamma$. If $[t,g], [s,h] \in K(\alpha)$, then the product $[t,g] \cdot [s,h] = [t,gh]$ is given by multiplying $g$ and $h$ pointwise, after ensuring that $t = s$ using the equivalence relation $\sim$.
    
    We then obtain a group isomorphism $R(\alpha): K(\alpha) \to K(\alpha)^2$ given by removing a caret at the bottom of diagrams in $K(\alpha)$. For example, $R(\alpha)$ maps
        \[
        \tikzfig{figures/appendix/treedec3}
        \]
     The group morphism $R(\alpha)$ along with the inclusion $\Gamma \to K(\alpha), g \mapsto [ \ | \ ,g]$ can be shown to yield a Jones isomorphism for $\alpha$ (Definition \ref{def:jonesiso}).
     
     We obtain a functor $\pi(\alpha) : \frc{\cat{SF}} \to \cat{Grp}$ defined by $\pi(\alpha)(\caret) := R(\alpha)$, which restricts to an action $V \curvearrowright K(\alpha)$. This is precisely the action $\rho: V \curvearrowright K(\alpha)$ defined in \cite{jonestechI, jonestechII}. We denote the resulting semidirect product by $G(\alpha) := K(\alpha) \rtimes V.$ It was shown in \cite{jonestechI} that the groups $G(\alpha)$ are precisely those considered by Tanushevski in \cite{tan16}, in which the lattice of normal subgroup s of $K(\alpha) \rtimes F \subseteq G(\alpha)$ was described. These groups $G(\alpha)$ are also examples of cloning systems, introduced by Witzel-Zaremsky in \cite{wz}, implying that they have good finiteness properties. The focus of \cite{jonestechI, jonestechII} and the present article is on the structure of isomorphisms between these groups.
     
     If $\alpha_0 \in \aut(\Gamma)$ and $\alpha_1(g) = e_\Gamma$, the resulting group $G(\alpha)$ is isomorphic to a restricted twisted permutational wreath product $\oplus_{\QQ_2} \Gamma \rtimes V$ \cite{jonestechI}. In \cite{jonestechI}, every isomorphism between two of these wreath products was shown to be \textit{spatial}, i.e. support-preserving up to a homeomorphism $\varphi$ of the Cantor space $\cantor$, leading to a complete classification of these wreath products up to isomorphism in terms of the input data $(\Gamma, \alpha_0)$.
     
     If $\alpha_0, \alpha_1 \in \aut(\Gamma)$, then $G(\alpha)$ is a semidirect product $L \Gamma \rtimes V$, where $L\Gamma$ denotes the set of continuous maps $\cantor \to \Gamma$, the latter equipped with the discrete topology \cite{jonestechII}. The action $V \curvearrowright L\Gamma$ is induced by the action $V \curvearrowright \cantor$, and is twisted using the automorphisms $\alpha_0, \alpha_1$. The isomorphisms $\theta: G \to \wt{G}$ between these semidirect products $G = L \Gamma \rtimes V$, $\wt{G} = L \wt{\Gamma} \rtimes V$ are also spatial, up to a multiple of a group morphism $\zeta: G \to Z \wt{G}$ \cite{jonestechII}. 
     
     In Theorem \ref{thm:isodecomp}, we extend these results and show that any isomorphism $\theta: G \to \wt{G}$ between any groups $G = K \rtimes V$, $\wt{G} = \wt{K} \rtimes V$ coming from isomorphisms $K \xrightarrow{\sim} K^2$, $\wt{K} \xrightarrow{\sim} \wt{K}^2$ is spatial, up to a multiple of a group morphism $\zeta: G \to Z \wt{G}$, using the notion of support in Definition \ref{def:gensupp}. This is indeed a generalisation; the class of groups $G(\alpha)$ coming from group morphisms $\alpha: \Gamma \to \Gamma^2, g \mapsto (\alpha_0(g), \alpha_1(g))$ with $\alpha_0 \in \aut(\Gamma)$ and $\alpha_1 \in \aut(\Gamma)$ or $\alpha_1 : g \mapsto e_\Gamma$ does not recover the full class of groups $G(R) = K \rtimes V$, with $R: K \xrightarrow{\sim} K^2$. For such $\alpha$, the group $K(\alpha)$ is never finitely generated, however there exist isomorphisms $\wt{R}: \wt{K} \to \wt{K}^2$ with $\wt{K}$ finitely generated \cite{directhopf, fgk, fgp}. An isomorphism $G(\alpha) \cong \wt{K} \rtimes V$ would restrict to an isomorphism $K(\alpha) \cong \wt{K}$ by Theorem \ref{thm:characteristic}, a contradiction.
     
     In \cite[Section 1.2.2]{jonestechII}, Brothier introduced a notion of support for elements $a \in K(\alpha)$ ($\alpha$ can be arbitrary) which we will denote here by $\wt{\supp}(a)$. Given a tree $t \in \mathfrak{T}$ and a tuple $g = (g_l)_{l \in \Leaf(t)},$ we may define a map $\kappa_t : \cantor \to \Gamma$ mapping $l x$ to $g_l$ for each leaf $l \in \Leaf(t)$ and $x \in \cantor.$ Denote by $\supp(\kappa_t(g))$ the support of the map $\kappa_t: \cantor \to \Gamma.$ For each $a = [t,g] \in K(\alpha)$, \[\wt{\supp}(a) := \bigcap_{s,h} \supp(\kappa_s(h)),\] where the intersection runs over all pairs $(s,h)$ with $s$ a tree and $h : \Leaf(s) \to \Gamma$ a map such that $a = [s,h].$ One may then wonder if $\wt{\supp}(a)$ has anything to do with Definition \ref{def:gensupp}. Indeed, one can verify that $\wt{\supp}(a) = \supp_{R(\alpha)}(a)$.
\end{remark}
\begin{definition}
Let $\mathcal{I}$ be the set of all finite unions of sdis. For each $I \in \mathcal{I}$ and subgroup $H \leq K,$ we let \[H_I := \{a \in H: \ | \ \supp(a) \subseteq I\}\] and \[H^R := \{a \in H : R_0(a) = R_1(a) = a\}.\] We refer to elements of $K^R$ as $R$-invariant. We also let
\[ \fixv(I) := \{v \in V \ | \ v(x) = x \text{ for all } x \in I\} \] and \[ \stabv(I) := \{ v \in V \ | \ v(I) = I\}.\]
\end{definition}
\begin{lemma}\label{lem:aIwelldef}
    Let $a \in K$ and $ \ u \in \{0,1\}^*$. If $t$ is a tree with $u \in \Leaf(t)$, define an element $a_{u,t} \in K^{\ell(t)}$ by $a_{u,t}(u) := a$, and $ a_{u,t}(l) = e_K$ for all $l \in \Leaf(t)$ not equal to $u$. Then the element \[\pi(t)^{-1}(a_{u,t}) \in K\] is independent of the choice of $t$, and is an element of $K_I.$
\end{lemma} 
\begin{proof}
   Suppose that $t$ is a tree having $u$ as a leaf. The fact that $\supp(\pi(t)^{-1}(a_{u,t})) \subseteq I$ follows immediately from Lemma \ref{lem:functorsupp}. Let $f := f_{k,\ell(t)}$ be an elementary forest with $k\neq i,$ where $l_t^i = u.$ Thus, $u$ is a leaf of $ft$, and
    \begin{align*}
        \pi(ft)\pi(t)^{-1}(a_{u,t}) = \pi(f)(a_{u,t}).
    \end{align*} Since $\pi(f)(a_{u,t})(u) = a$ and $\pi(f)(a_{u,t})(l) = e_K$ for all leaves $l$ of $ft$ not equal to $u,$ so we have that \[\pi(f)(a_{u,t}) = a_{ft},\] yielding that $\pi(ft)^{-1}(a_{ft}) = \pi(t)^{-1}(a_{u,t}).$ Thus, for every forest $f$ with $\ell(t)$ roots not supported at the leaf $u$ of $t,$ we have that $\pi(ft)^{-1}(a_{ft}) = \pi(t)^{-1}(a_{u,t}).$

     We finish with the observation that for each $u \in \{0,1\}^*,$ there is a smallest tree $t_u$ among trees $t$ having $u$ as a leaf. More precisely, if $t$ is a tree having $u$ as a leaf, then there is a forest $f$ not supported at the leaf $u$ of $t_u$ such that $t = f t_u.$
\end{proof} 
Lemma \ref{lem:aIwelldef} allows us to make the following definition.
\begin{definition}
    Suppose that $I = \cantor_u$ is an sdi, and that $a \in K.$ Using the notation of Lemma \ref{lem:aIwelldef}, we define \[a_I := \pi(t)^{-1}(a_{u,t})\] where $t$ is any tree having $u$ as a leaf. Furthermore, define \[a^{(u)} := R_u(a)_{I}.\]
\end{definition}
\begin{example}
    Suppose that $a \in K,$ and let $I := \cantor_{01}$. The diagram of $a_I$ is
    \[
     \tikzfig{figures/characteristic/decorationinterval}
    \]
    and the diagram of $a^{(01)}$ is
    \[
    \tikzfig{figures/characteristic/decorationcomponent}
    \] Here we have left some vertices unlabelled for convenience.
\end{example}
\begin{lemma}\label{lem:vaI}
    Suppose that $I = \cantor_u$ and $J = \cantor_m$ are sdis, and that $v \in V$ satisfies $v(ux) = mx$ for all $x \in \cantor.$ Then $v(a_I) = a_J.$
\end{lemma}
\begin{proof}
    The hypothesis $v(ux) = mx$ for all $x \in \cantor$ is equivalent to the fact that there exist trees $t,s \in \mathfrak{T}$ and a permutation $\sigma$ such that $u = l_t^i,$ $m = l_s^{\sigma(i)},$ and $v = s^{-1} \sigma t.$ Thus, using the notation of Lemma \ref{lem:aIwelldef},
    \begin{align*}
        \pi(v)(a_I) &= \pi(s^{-1}\sigma t)(\pi(t^{-1})(a_{u,t})) \\ &= \pi(s^{-1} \sigma) (a_{u,t}) \\
        &= \pi(s^{-1})(a_s) \\
        &= a_J.
    \end{align*}
\end{proof}
\begin{lemma}\label{lem:aIhalves}
    If $I$ is an sdi, then $a_I = R_{0}(a)_{I_0} \cdot R_1(a)_{I_1}.$
\end{lemma}
\begin{proof}
Let $t$ be a tree having $u$ as a leaf, and let $i \in \{1,\dots,\ell(t)\}$ be such that $u = l_t^i.$ Let $f:= f_{i,\ell(t)}.$ Using the notation of Lemma \ref{lem:aIwelldef}, we have that 
\begin{align*}
    a_I &= \pi(t)^{-1}(a_{u,t}) \\
        &= \pi(ft)^{-1} \pi(f) (a_{u,t}) \\
        &= \pi(ft)^{-1} (R_0(a)_{u0, ft} \cdot R_1(a)_{u1,ft}) \\
        &= R_{0}(a)_{I_0} \cdot R_1(a)_{I_1}.
\end{align*}
\end{proof}
\begin{lemma}\label{lem:suppdecomp}
    If $a \in K$ and $t \in \mathfrak{T}$ is a tree, then \[a = \prod_{l \in \Leaf(t)} a^{(l)}, \tag{$*$}\] and the product can be taken in any order. In particular, if $(I^j)_{j=1}^n$ is an sdp, then the element $a \in K$ can be written uniquely as a product of elements $a_1,\dots, a_n \in K$ with $\supp(a_j) \subseteq I^j$ for all $1 \leq j \leq n.$
\end{lemma}
\begin{proof}
   We start by noting that the product in $(*)$ is well-defined and invariant under permuting terms since the $a^{(l)}$ have disjoint supports. Let $a \in K,$ and suppose that $u \in \{0,1\}^*.$ Then 
    \begin{align*}
        a^{(u0)}\cdot a^{(u1)} &:= R_{u0}(a)_{\cantor_{u0}} \cdot R_{u1}(a)_{\cantor_{u1}} \\ &= R_0\left(R_u(a)\right)_{\cantor_{u0}} \cdot R_1\left(R_u(a)\right)_{\cantor_{u1}} \\ &= R_u(a)_{\cantor_u} \\ &=: a^{(u)},
    \end{align*} where the second last line follows from Lemma \ref{lem:aIhalves}.
    
    Clearly $a = \prod_{l \in \Leaf(t)} a^{(l)}$ if $t = |$ is the trivial tree with one leaf. Suppose that $(*)$ holds for all trees $t$ with $n$ leaves. If $t$ is a tree with $n+1$ leaves, then $t = f_{k,n} s$ for some tree $s$ with $n$ leaves, and $1 \leq k \leq n.$ Letting $f := f_{k,n}$ we have that 
    \begin{align*}
        \prod_{l \in \Leaf(t)} a^{(l)} &=  \prod_{l \in \Leaf(fs) }a^{(l)} \\ &=  \left(\prod_{l \in \Leaf(s) \setminus \{l_s^k\}}  a^{(l)} \right) \cdot a^{(l_s^k \cdot 0)} \cdot a^{(l_s^k \cdot 1)} \\ &= \prod_{l \in \Leaf(s)} a^{(l)} \\ &= a.
    \end{align*} Thus, $(*)$ holds for every $a \in K$ and $t \in \mathfrak{T}$ by induction. 
    
    The remainder of the lemma follows from Lemma \ref{lem:suppproperties}, and the fact that if $a \in K, \
     t \in \mathfrak{T},$ and $l \in \Leaf(t),$ then $\supp(a^{(l)}) \subseteq \cantor_l$.  
\end{proof} 
\begin{corollary}
    If $u \in \{0,1\}^*$ and $a \in K_I,$ where $I := \cantor_u,$ then $a = a^{(u)}.$
\end{corollary}

\subsection{Isomorphism structure}
In this subsection we fix two group isomorphisms $R: K \to K^2, \ \wt{R}: \wt{K} \to \wt{K}^2$ and study the structure of isomorphisms between the resulting semidirect products $G = K \rtimes V, \wt{G} = \wt{K} \rtimes V.$ The following theorem has essentially the same proof as \cite[Theorem 4.1]{fscII}, which concerns isomorphisms between certain restricted permutational wreath products $\Gamma \wr V.$ We reproduce the proof using our terminology.
\begin{theorem}\label{thm:characteristic}
Every isomorphism $\theta: G \to \wt{G}$ restricts to an isomorphism $K \to \wt{K}.$ In particular, the normal subgroup $K \nsg G$ is characteristic.
\end{theorem}
\begin{proof}
For each subset $S \subseteq G,$ let $\nc(S)$ denote the normal closure of $S$ in $G.$ If a normal subgroup $L \nsg G$ can be written as a product $L = L_0 \times L_1,$ with $\nc(L_0) = L = \nc(L_1),$ we say that $L$ has the decomposability property, using a similar definition for normal subgroups $\wt{L} \nsg \wt{G}$. The approach is to show that $K \nsg G$ is the largest normal subgroup of $G$ having the decomposable property.

Firstly, $K$ has the decomposability property since $K = K_0 \times K_1$ via the isomorphism $K \cong K \times K,$ and $K_0$ and $K_1$ are conjugate in $G$ via the element $v_{(12)} := (\caret)^{-1} \circ (12) \circ \caret \in V.$ 

Suppose then that $L = L_0 \times L_1 \nsg G$ has the decomposability property, and assume that $L \subsetneq K,$ in search of a contradiction. Let $q: G \surj G/K = V, av \mapsto v$ be the canonical projection, and note that $q(L) \nsg V,$ since $L \nsg G$ and $q$ is surjective. Since $L \subsetneq K,$ there is a non-trivial $v \in V$ such that $av \in L$ for some $a \in K,$ and so $q(av)$ is a non-trivial element of $q(L).$ Since $V$ is simple, it must be the case that $q(L) = V.$

Let $p: L \surj V$ be the restriction of $q$ to $L,$ and note that $p(L_0), p(L_1) \nsg V.$ For $i \in \{0,1\},$ if $L_i$ were contained in $K,$ then so would its normal closure, implying that $L \subseteq K,$ contradicting our assumption. Thus, both $L_0$ and $L_1$ are not contained in $K,$ yielding that $p(L_0) = p(L_1) = V.$

But then if $v, w \in V,$ we can write $v = p(l_0)$ and $w = p(l_1)$ for some $l_0 \in L_0, \ l_1 \in L_1$, from which we obtain $vw = wv.$ We conclude that $V$ is abelian, obtaining our desired contradiction. Thus, $L \subseteq K.$

The fact that $\theta(K) = \wt{K}$ follows from the fact that $\theta(K)$ and $\wt{K}$ are both the largest normal subgroup of $\wt{G}$ having the decomposability property.

Taking $\wt{K} = K$ yields that $K \nsg G$ is characteristic.
\end{proof}

\begin{lemma}\label{lem:weakiso}
Suppose that $\theta: G \to \wt{G}$ is an isomorphism. Then there is a unique triple $(\kappa, c, \varphi),$ with $\kappa: K \to \wt{K}$ an isomorphism, $c: V \to \wt{K}$ a map, and $\varphi \in N_\cantor(V),$ such that
\[ 
\theta(av) = \kappa(a) \cdot c_v \cdot \ad_{\varphi(v)} \text{ for all } av\in G
\] and \[ c_{vw} = c_v \cdot  \pi(\ad_{\varphi}(v))(c_w) \text{ for all } v,w \in V.\]
\end{lemma}
\begin{proof}
For each $v \in V$, we can write $\theta(v)$ uniquely as a product $\theta(v) = c_v \cdot \phi_v$ with $c_v \in K$ and $\phi_v \in V$. The map $\phi: v \mapsto \phi_v$ then defines an automorphism of $V$, so by Proposition \ref{prop:autv}, there is a unique $\varphi \in N_\cantor(V)$ satisfying $\phi = \ad_{\varphi}: v \mapsto \varphi v \varphi^{-1}$. Furthermore,
\begin{align*}
    \theta(vw) &= \theta(v) \theta(w) = c_{vw} \phi_{vw} \\ &= c_v \phi_v c_w \phi_w \\
    &= c_v \pi(\phi_v)(c_w) \phi_v \phi_w.
\end{align*} Letting $\kappa$ be the restriction of $\theta$ to $K,$ by Theorem \ref{thm:characteristic}, we have proved the existence of the required tuple $(\kappa, c, \varphi)$. Uniqueness is clear.
\end{proof}
\begin{lemma}\label{lem:gencentre}
The centre $ZG$ of $G$ is equal to $(ZK)^R,$ the group of elements of $ZK$ that are $R$-invariant, and similarly $Z\wt{G} = (Z \wt{K})^{\wt{R}}.$
\end{lemma}
\begin{proof}
Suppose that $g = av \in ZG.$ Then for each $w \in V$ we have that \[g w g^{-1} = w,\] and so \[a v w v^{-1} a^{-1} = a \cdot \pi(vwv^{-1})(a^{-1}) \cdot vwv^{-1} = w,\] and so $vwv^{-1} = w.$ This implies that $v \in ZV,$ and since $V$ is a simple non-abelian group we get that $v = e.$

Thus, $ g = a \in ZK,$ so it remains to show that $a$ is $R$-invariant. Let $u$ be a non-empty word in $0$ and $1$ and let $v \in V$ be such that $v(0x) = ux$ for all $x \in \cantor,$ so that by Lemma \ref{lem:Kactionformula} we have that \[\pi(v)(a)(0) = a(u).\] But since $a \in ZG$ we have that $\pi(v)(a) = a,$ and thus $a(u) = a(0).$ Notice then that $a(0)$ is $R$-invariant, since
\begin{align*}
    a(0) &= R^{-1}(a(00), a(01)) \\
         &= R^{-1}(a(0), a(0)).
\end{align*} We finish by noting that
\begin{align*}
    a &= a(\epsilon) \\
      &= R^{-1}(a(0), a(1)) \\
      &= R^{-1}(a(00), a(01)) \\
      &= a(0).
\end{align*}

Conversely, if $a \in (ZK)^R,$ then $\pi(v)(a) = a$ for all $v \in V$ by Lemma \ref{lem:Kactionformula}. Since $a$ commutes with every $b \in K,$ we have that $a \in ZG.$
\end{proof}
\begin{lemma}\label{lem:thetasupp}
Let $\theta: G \to \wt{G}, av \mapsto \kappa(a) c_v \ad_{\varphi}(v)$ be an isomorphism, and $I \in \mathcal{I}$ be a finite union of sdis. Then \[ \theta(K_I) \subseteq \wt{K}_{\varphi(I)} \cdot Z \wt{G}.  \]
\end{lemma}
\begin{proof}
We start with the case in which $I$ is an sdi. The case $I = \cantor$ is clear. Suppose then that $I \subsetneq \cantor.$ Let $J \subseteq \varphi(\cantor \setminus I)$ be an sdi, and suppose that $a \in K_I, \ \wt{b} \in \wt{K}_{J}.$

Write $\wt{b} = \wt{b}_0 \cdot \wt{b}_1$ where each $\supp(\wt{b}_i) \subseteq J_i,$ and let $d$ be the element of $K$ satisfying $[\theta(a), \wt{b}_0] = \theta(d).$ We then have that $\supp(d) \subseteq I$ and $\supp(\theta(d)) \subseteq J_0.$ Since $\varphi(I)$ and $J$ are disjoint, we may consider an element $v \in \fixv(\varphi(I))$ interchanging $J_0$ and $J_1.$ Letting $\phi := \ad_{\varphi},$ we have that $\phi^{-1} v \in \fixv(I),$ and so $\pi(\phi^{-1} v)(d) = d.$ Thus,
\begin{align*}
    \theta(d) &= \theta(\pi(\phi^{-1} v)(d)) \\ &= \ad(c_{\phi^{-1}v})(\wt{\pi}(v)(\theta(d))),
\end{align*} yielding that 
\[
\supp(\theta(d)) = v(\supp(\theta(d)).
\] Since $\supp(\theta(d)) \subseteq J_0$ and $v(\supp(\theta(d)) \subseteq J_1,$ we obtain that $\supp(\theta(d)) = \emptyset,$ allowing us to conclude that $[\theta(a), \wt{b}_0] = e.$ An identical argument shows that $[\theta(a), \wt{b}_1] = e,$ and we immediately obtain that $[\theta(a), \wt{b}] = e.$

Since $\varphi(I)^c$ is a finite union of sdis, each $\wt{b} \in \wt{K}_{\varphi(I^c)} $ can be written as a finite product $\wt{b}^1 \cdots \wt{b}^m$ with each $\wt{b}^i$ supported in an sdi $J^i \subseteq \varphi(I^c),$ and so $[\theta(a), \wt{b}] = e_\Gamma.$ 

This shows that $\theta(a)(u) \in Z \wt{K}$ for all words $u \in \{0,1\}^*$ corresponding to an sdi contained in $\varphi(I^c).$ Let $J, J'$ be sdis contained in $\varphi(I)^c$ with corresponding words $u$ and $u'$, and let $v \in \fixv(\varphi(I))$ with $v(J) = J'.$ Then 
\begin{align*}
    \theta(a) &= \theta(\pi(\phi^{-1}v)(a))\\
              &= \ad(c_{\phi^{-1}v})\left(\wt{\pi}(v)(\theta(a))\right).
\end{align*} Evaluating the above at the word $u',$ since $\theta(a)(u') \in Z \wt{K},$ we have that \[\theta(a)(u') = \wt{\pi}(v)(\theta(a))(u') = \theta(a)(u).\] We can now let $\wt{c} := \theta(a)(u)$, where $u \in \{0,1\}^*$ is any word corresponding to an sdi contained in $\varphi(I^c)$. If $u$ is such a word, then
\begin{align*}
    \wt{c} &= \theta(a)(u) \\
           &= \wt{R}^{-1}(\theta(a)(u0), \theta(a)(u1)) \\
           &= \wt{R}^{-1}(\wt{c}, \wt{c}),
\end{align*} showing that $\wt{c}$ is $R$-invariant. We thus have $\wt{c} \in Z\wt{G}$, obtaining the decomposition \[\theta(a) = \left( \theta(a) \cdot \wt{c}^{-1} \right) \cdot \wt{c} \in \wt{K}_{\varphi(I)} \cdot Z \wt{G}. \] Thus, $\theta(K_I) \subseteq \wt{K}_{\varphi(I)} \cdot Z\wt{G}$. 

If $I$ is a finite union of sdis, we can write $I = \bigsqcup_{j = 1}^n I^j,$ with the $I^j$ sdis, so that $K_I = K_{I^1} \times \cdots \times K_{I^n}.$ But then if $a \in K_I,$ we can write  \[\theta(a) = \wt{a}^1 \cdot \wt{b}^1 \cdots \wt{a}^n \cdot \wt{b}^n = \wt{a}^1 \cdots \wt{a}^n \cdot \wt{b}^1 \cdots \wt{b}^n\] with each $\wt{a}^j \in K_{\varphi(I^j)}$ and $\wt{b}^j \in Z \wt{G},$ so that $\theta(a) \in \wt{K}_{\varphi(I)} \cdot Z \wt{G}.$
\end{proof} 
\begin{lemma}\label{lem:centremorphism}
Suppose that $\theta: G \to \wt{G}$ is an isomorphism, and $\zeta : G \to Z \wt{G}$ is a group morphism. Then the map $\theta \cdot \zeta : G \to \wt{G}, g \mapsto \theta(g) \cdot \zeta(g)$ is an isomorphism of groups.

Moreover,
\[ ZG \subseteq G' \subseteq \ker \zeta,\] where $G'$ denotes the derived subgroup $[G,G]$ of $G$.
\end{lemma} 
\begin{proof}
The proof is essentially the same as that of \cite[Proposition 2.7]{jonestechII}. Multiplicativity of $\theta \cdot \zeta$ is clear. We have that
\begin{align*}
    \ker(\theta \cdot \zeta) &= \{ av \in G: \theta(av) = \zeta(av)^{-1} \} \\ &= \{a \in ZG: \theta(a) = \zeta(a)^{-1}\},
\end{align*} which follows from the fact that $\theta$ is an isomorphism and $ZG \subseteq K.$ Therefore, if we can show that $ZG \subseteq \ker \zeta,$ we may conclude that $\theta \cdot \zeta$ is injective.

The inclusion $G' \subseteq \ker \zeta$ follows from the fact that $Z\wt{G}$ is abelian. Let $a \in ZG,$ and $I \subsetneq \cantor$ be an sdi. Let $v \in V$ and $J$ be an sdi disjoint from $I,$ such that $v(J_0) = J$ and $v(J_1) = I,$ so that $v(J) = I \sqcup J.$ Then 
\begin{align*}
    [v, a_J ] &= \pi(v)(a_J) \cdot a_J^{-1} \\ 
              &= \pi(v)(a_J^{(0)} \cdot a_J^{(1)}) \cdot a_J^{-1} \\
              &= R_0(a)_J \cdot R_1(a)_{I} \cdot a_J^{-1} \\
              &= a_J \cdot a_I \cdot a_J^{-1} \\
              &= a_I.
\end{align*} Thus, $a_I \in G'.$ Writing $a = a_{\cantor_0} \cdot a_{\cantor_1},$ we obtain that $a \in G'.$ Thus, $\theta \cdot \zeta$ is injective.

Surjectivity of $\theta \cdot \zeta$ follows from the fact that the image of the element $\theta^{-1}(\wt{g}) \cdot \theta^{-1} \circ \zeta \circ \theta^{-1}(\wt{g}^{-1}) \in G$ under $\theta \cdot \zeta$ is $\wt{g},$ for each $\wt{g} \in \wt{G}.$ Here one needs that $ZG \subseteq \ker \zeta.$
\end{proof}

\begin{theorem}\label{thm:isodecomp}
If $\theta: G \to \wt{G}$ is an isomorphism, then there is a unique tuple $(\kappa^0, \zeta, c, \varphi)$ with $\kappa^0: K \to \wt{K}$ an isomorphism, $\zeta: G \to Z\wt{G}$ a group morphism, $c: V \to \wt{K}$ a map, and $\varphi \in N_\cantor(V)$ a homeomorphism, satisfying the following properties:
\begin{enumerate}
\item[(1)] $\theta(av) = \kappa^0(a) \cdot \zeta(av) \cdot c_v \cdot \ad_{\varphi}(v)$ for all $av \in G.$
    \item [(2)]  $c_{vw} = c_v \cdot \wt{\pi}(\ad_{\varphi}(v))(c_w) $ for all $v,w \in V.$
    \item [(3)] $\supp(\kappa^0(a)) = \varphi(\supp(a))$ for all $a \in K.$
\end{enumerate} 
\end{theorem}
\begin{proof}
Let $\theta: G \to \wt{G}$ be an isomorphism. Using the notation of Lemma \ref{lem:weakiso}, write $\theta(av) = \kappa(a) c_v \ad_{\varphi}(v)$ for all $av \in G.$

For each $I \in \mathcal{I}$ properly contained in $\cantor,$ the subgroups $\wt{K}_{\varphi(I)} , Z \wt{G} \leq \wt{K}$ intersect trivially and commute, so by Lemma \ref{lem:thetasupp}, there are unique group morphisms $\kappa^I: K_I \to \wt{K}_{\varphi(I)}$ and $\zeta^I: K_I \to Z \wt{G}$ satisfying $\kappa(a) = \kappa^I(a) \cdot \zeta^I(a)$ for all $a \in K_I.$ 

If $u$ is a non-empty word in $0$ and $1,$ then for each $a \in K_{\cantor_u},$ we have that
\begin{align*}
    \kappa(a) &= \kappa^{\cantor_u}(a) \cdot \zeta^{\cantor_u}(a) = \kappa(a^{(u0)}) \kappa(a^{(u1)}) \\
              &= \kappa^{\cantor_{u0}}(a^{(u0)}) \cdot \kappa^{\cantor_{u1}}(a^{(u1)}) \cdot \zeta^{\cantor_{u0}}(a^{(u0)}) \cdot \zeta^{\cantor_{u1}}(a^{(u1)}).
\end{align*} Here we have written $\kappa(a)$ as an element of $\wt{K}_{\varphi(I)} \cdot Z \wt{G}$ in two different ways, and so \[\zeta^{\cantor_u}(a) = \zeta^{\cantor_u}(a^{(u)}) = \zeta^{\cantor_u}(a^{(u0)}) \cdot \zeta^{\cantor_u}(a^{(u1)}).\] This implies that for each tree $t \in \mathfrak{T}$ with $|\Leaf(t)| \geq 2,$ we have that the map
\[
\zeta^t(a) := \prod_{l \in \Leaf(t)} \zeta^{\cantor_l}(a^{(l)}), \ a \in K
\] is independent of the choice of $t,$
and we denote the map $\zeta^t$ by $\zeta^\cantor.$

\textbf{Claim 1:} If $I \in \mathcal{I}$ is properly contained in $\cantor,$ then for all $a \in K_I$ we have that $\zeta^I(a) = \zeta^\cantor(a).$

If $I \subsetneq \cantor$ is a finite union of sdis, we can find sdis $I^1, \dots, I^n$ and a non-trivial tree $t$ such that $I = \bigsqcup_{j=1}^n I^j,$ and $I^j = \cantor_{u^j}$ for some leaf $u^j$ of $t$, for each $\ 1 \leq j \leq n.$ Then for all $a \in K_I$
\begin{align*}
    \zeta^t(a) &= \prod_{u' \in \Leaf(t)} \zeta^{\cantor_{u'}}(a^{(u')}) \\
     &= \prod_{j=1}^n \zeta^{\cantor_{u^j}}(a^{(u^j)}) \\
     &= \prod_{j=1}^n \zeta^{I^j}(a^{(u^j)}) \\
     &= \zeta^I(a),
\end{align*} where the second line follows from the fact that $a$ is supported in $I,$ and the final line can be seen from writing $a = \prod_{i = 1}^n a^{(u^j)}$ and writing $\theta(a)$ as an element of $\wt{K}_{\varphi(I)} \times Z\wt{G}$ in two different ways:
\begin{align*}
   \theta(a) &= \kappa^I(a) \cdot \zeta^I(a) \\ 
             &= \prod_{j=1}^n \kappa^{I^j}(a^{(u^j)}) \cdot \prod_{j = 1}^n  \zeta^{I^j}(a^{(u^j)}).
\end{align*}

\textbf{Claim 2:} For all $a \in K$ and $v \in V$ we have that $\zeta^\cantor(\pi(v)(a)) = \zeta^\cantor(a).$

Let $a \in K, v \in V.$ Then 
\begin{align*}
   \kappa\left(\pi(v)\left(a^{(0)}\right)\right) &= \ad(c_v) \wt{\pi}(\varphi v \varphi^{-1}) \left(\kappa^{I_0}\left(a^{(0)}\right) \cdot \zeta^{I_0}\left(a^{(0)}\right)\right) \\
                   &= \ad(c_v) \wt{\pi}(\varphi v \varphi^{-1}) \left(\kappa^{I_0}\left(a^{(0)}\right) \cdot \zeta^{I_0}\left(a^{(0)}\right)\right) \\
                   &= \kappa^{v(I_0)}\left(\pi(v)\left(a^{(0)}\right)\right) \cdot \zeta^{v(I_0)}\left(\pi(v)\left(a^{(0)}\right)\right), 
\end{align*} and so $\zeta^{I_0}\left(a^{(0)}\right) = \zeta^{v(I_0)}\left(\pi(v)\left(a^{(0)}\right)\right).$ From Claim 1, we have that $\zeta^{I_0} \left(a^{(0)}\right) = \zeta^{\cantor}\left(a^{(0)}\right)$ and $\zeta^{v(I_0)}\left(\pi(v)\left(a^{(0)}\right)\right) = \zeta^\cantor\left(\pi(v)\left(a^{(0)}\right)\right),$ yielding $\zeta^{\cantor}\left(a^{(0)}\right) = \zeta^{\cantor}\left(\pi(v)\left(a^{(0)}\right)\right).$ An identical argument shows that $\zeta^{\cantor}\left(a^{(1)}\right) = \zeta^{\cantor}\left(\pi(v)\left(a^{(1)}\right)\right),$ immediately obtaining the desired equality $\zeta^{\cantor}(a) = \zeta^{\cantor}(\pi(v)(a)).$

Claim 2 allows us to extend $\zeta^{\cantor}$ to a morphism $\zeta: G \to Z \wt{G}$ via the formula $\zeta(av) := \zeta^{\cantor}(a), av \in G.$ Since $\zeta$ is valued in $Z \wt{G},$ the map $\zeta^\dagger: G \to Z \wt{G}, g \mapsto \zeta(g)^{-1}$ is also a group morphism, and by Lemma \ref{lem:centremorphism}, the map $\theta^0 := \theta \cdot \zeta^\dagger$ is an isomorphism. Denote by $\kappa^0 : K \to \wt{K}$ the restriction of $\theta^0$ to $K.$ 

\textbf{Claim 3:} We have that $\supp(\kappa^0(a)) = \varphi(\supp(a))$ for all $a \in K.$

We start by proving the inclusion $\supp(\kappa^0(a)) \subseteq \varphi(\supp(a)).$ The case in which $\supp(a) = \cantor$ is clear. Suppose then that $a \in K$ with $\supp(a) \subsetneq \cantor.$ If $x \notin \supp(a),$ then we may consider an sdi $I \neq \cantor$ such that $x \in I \subseteq \cantor \setminus \supp(a).$ But then $a \in K_{\cantor \setminus I},$ so $\theta(a) = \kappa^{\cantor \setminus I} (a) \cdot \zeta(a)$ by Claim 1, yielding $\kappa^0(a) = \kappa^{\cantor \setminus I}(a).$ Thus, $\varphi(x) \notin \supp(\kappa^0(a)).$ This establishes the inclusion $\supp(\kappa^0(a)) \subseteq \varphi(\supp(a))$ for all $a \in K.$ The reverse inclusion is obtained by applying the same considerations to $\theta^{-1},$ obtaining for all $\wt{a} \in \wt{K}$ that $\supp((\kappa^0)^{-1}(\wt{a})) \subseteq \varphi^{-1}(\supp(\wt{a})).$ In particular, we have that $\supp(a) \subseteq \varphi^{-1}(\supp(\kappa^0(a)))$ for all $a \in K.$

So far, we have found a tuple $(\kappa^0, \zeta, c, \varphi)$ satisfying properties $(1)$ through $(3).$ Uniqueness of $c$ and $\varphi$ follows from Lemma \ref{lem:weakiso}. If $\kappa': K \xrightarrow{\sim} \wt{K}, \zeta' : G \to \wt{G}$ satisfy $\kappa = \kappa' \cdot \zeta'$ with $\supp(\kappa')(a) = \varphi(\supp(a))$ for all $a \in K,$ then if $a$ is an element of $K$ with support strictly contained in $\cantor$ we have that 
\begin{align*}
    \kappa(a) &= \kappa^0(a) \cdot \zeta(a) \\ 
              &= \kappa'(a) \cdot \zeta'(a).
\end{align*} Since $\supp(\kappa^0(a)) = \supp(\kappa'(a)) \subsetneq \cantor,$ we obtain that $\zeta(a) = \zeta'(a),$ and thus $\kappa^0(a) = \kappa'(a).$ One then obtains that $\zeta(a) = \zeta'(a)$ and $\kappa^0(a) = \kappa'(a)$ for all $a \in K$ via decomposing $a = a^{(0)} \cdot a^{(1)},$ noting that $\supp(a^{(i)}) \subseteq \cantor_i.$ Thus, $\kappa^0 = \kappa',$ and $\zeta = \zeta'$ since $\zeta(v) = \zeta'(v) = e$ for all $v \in V.$
\end{proof}

\subsection{Obvious isomorphisms}\label{subsec:obviousiso}
In this subsection we will see some sufficient conditions for two semidirect products $G = K \rtimes V, \ \wt{G} = \wt{K} \rtimes V$ to be isomorphic, where $G$ and $\wt{G}$ are obtained from isomorphisms $K \xrightarrow{\sim} K^2$ and $ \wt{K} \xrightarrow{\sim} \wt{K}^2$ respectively.
\begin{remark}
Retaining the notation of Remark \ref{rmk:covariantmonoidalexample}, in \cite[Proposition 2.4]{jonestechII}, Brothier gives some sufficient conditions for groups $G(\alpha) = K(\alpha) \rtimes V$ with $\alpha_0$, $\alpha_1 \in \aut(\Gamma)$ to be isomorphic, in terms of the automorphisms $\alpha_0$, $\alpha_1$. One such sufficient condition is that if $\wt{\alpha} : \Gamma \to \Gamma^2, g \mapsto (\alpha_1(g), \alpha_0(g))$, then $G(\alpha) \cong G(\wt{\alpha})$. 

Apart from operating within a larger class of groups (Remark \ref{rmk:covariantmonoidalexample}), we are more focused on determining the isomorphism class of a group $G(R) = K \rtimes V$ in terms of the isomorphism $R: K \xrightarrow{\sim} K^2$. For groups $G(\alpha)$ with $\alpha: \Gamma \to \Gamma^2$ and $\alpha_0, \alpha_1 \in \aut(\Gamma)$, we are concerned with how the isomorphism class of $G(\alpha)$ varies with the ``large-scale" isomorphism $R(\alpha)$, rather than the ``small-scale" morphism $\alpha$. Interestingly, changes to $\alpha$ such as swapping $\alpha_0$ and $\alpha_1$ can be detected at the level of the isomorphism $R(\alpha)$, so Proposition \ref{prop:sufficientiso} extends \cite[Proposition 2.4]{jonestechII}. We will not prove this here, since the same principle applies to groups $G(\omega)$ with $\omega: \Gamma^2 \to \Gamma$ (Definition \ref{def:komega}), and is easily adapted to the groups $G(\alpha)$. More precisely, Proposition \ref{prop:sufficientomegaiso} follows from Proposition \ref{prop:sufficientiso}.
\end{remark}
\begin{lemma}\label{lem:functoriso}
    Let $R : K \to K^2$ and $\wt{R}: \wt{K} \to \wt{K}^2$ be group isomorphisms, and let $\kappa: K \to \wt{K}$ be a group morphism such that the diagram 
    \[ 
    \begin{tikzcd} 
    K \arrow[r, "R"] \arrow[d, swap, "\kappa"] & K \times K \arrow[d, "\kappa \times \kappa"] \\
    \wt{K} \arrow[r, swap, "\wt{R}"] & \wt{K} \times \wt{K}
    \end{tikzcd}
    \] commutes. Then $\kappa$ is equivariant with respect to the actions $V \curvearrowright K, \wt{K},$ and thus extends to a group morphism $\theta: G \to \wt{G}, av \mapsto \kappa(a) \cdot v.$ If $\kappa$ is an isomorphism, so is $\theta$.
\end{lemma}
\begin{proof}
We have that $\kappa R_i = \wt{R}_i \kappa, \ i \in \{0,1\},$ which implies that $\kappa R_u = \wt{R}_u \kappa$ for all $u \in \{0,1\}^*.$ Thus, if $t \in \mathfrak{T}$ is a tree, then for all $a \in K,$
\begin{align*}
\wt{\pi}(t) \circ \kappa (a) &= (\wt{R}_l \circ \kappa (a))_{l \in \Leaf(t)} \\ &= (\kappa \circ R_l(a))_{l \in \Leaf(t)} \\ &= \kappa^{\ell(t)} \circ \pi(t)(a).
\end{align*} Hence, we have that \[\wt{\pi}(t) \circ \kappa = \kappa^{\ell(t)} \circ \pi(t)\] for all trees $t \in \mathfrak{T},$ which quickly implies that \[\wt{\pi}(v) \circ \kappa = \kappa \circ \pi(v)\] for all $v \in V.$ The fact that $\theta$ is an isomorphism if $\kappa$ is an isomorphism follows immediately from the formula of $\theta$.
\end{proof}
\begin{lemma}\label{lem:inneriso}
    We have that $G(R \circ \ad(b)) \cong G(R)$ for all $b \in K$.
\end{lemma}
\begin{proof}
    Let $\wt{R} := R \circ \ad(b)$, and $\wt{\pi} := \pi(\wt{R})$. For each word $u = u_0 u_1 \cdots u_n  \in \{0,1\}^*$, define $b_u^R := R_{u_n}(b) \cdot R_{u_{n-1} u_n}(b) \cdots R_u(b)$.

    \textbf{Claim:} For all trees $t \in \mathfrak{T}$ we have that $\wt{\pi}(t) = \ad\left(b_t\right) \circ \pi(t),$ where $b_t := (b_u^R)_{u \in \Leaf(t)}.$

    It is quickly verified that the claim is true when $t = \caret$. Suppose that $t \in \mathfrak{T}$ satisfies the claim, and let $f$ be an elementary forest, with the caret in the $i^{\text{th}}$ position. Then 
    \begin{align*}
    \wt{\pi}(ft) &= \wt{\pi}(f) \wt{\pi}(t) \\ &= \wt{\pi}(f) \ad(b_t) \pi(t) \\
            &= \left(\text{id}^{i - 1} \times \wt{\pi}(\caret) \times \text{id}^{\ell(t) - i}\right) \circ \ad(b_t) \pi(t) \\
            &= \ad(d) \circ \pi(ft)
    \end{align*} where $d \in K^{\ell(t)},$ with for all $1 \leq j \leq \ell(t)$
    \[d_j = \begin{cases}
    b_{u_j}^R & j < i, \\
    b_0^R \cdot R_0(b_{u_i}^R) & j = i, \\ 
    b_1^R \cdot R_1 (b_{u_i}^R) & j = i+1, \\
    b_{u_{j-1}}^R & j > i.
    \end{cases}\] Thus, $d = b_{ft},$ proving the claim.

    This claim implies the identity $\wt{\pi}(f) = \ad(b_f) \pi(f)$ for all forests $f,$ where $b_f := (b_{f_i})_{i=1}^n$ after writing $f$ as a tensor product of trees $f_1, \dots, f_n.$

    Let $v = s^{-1} \sigma t \in V.$ Then for all $a \in K,$
    \begin{align*}
        \wt{\pi}(v)(a) &= \wt{\pi}(s)^{-1} \wt{\pi}(\sigma) \wt{\pi}(t) (a) \\                        &= \pi(s)^{-1} \ad(b_s^{-1}) \pi(\sigma) \ad(b_t) \pi(t)(a) \\
                       &= \pi(s)^{-1}\left(b_s^{-1} \cdot \pi(\sigma)(b_t) \cdot \pi(\sigma t) (a) \cdot \pi(\sigma)( b_t)^{-1} \cdot b_s\right) \\
                       &= \ad\left(\pi(s)^{-1}(b_s^{-1} \cdot \pi(\sigma)(b_t))\right) \circ \pi(v)(a). \tag{$*$}
    \end{align*} 
    The value of $\pi(s)^{-1}(b_s^{-1} \cdot \pi(\sigma)(b_t))$ does not depend on the representative $t,\sigma,s$ of $v,$ and so we may define 
    \begin{align*}
        c_v &:= \left(\pi(s)^{-1}(b_s^{-1} \cdot \pi(\sigma)(b_t))\right)^{-1} \\ &= \pi(s)^{-1}(\pi(\sigma)(b_t^{-1}) \cdot b_s).
    \end{align*} From $(*),$ we have that $\pi(v) = \ad(c_v) \wt{\pi}(v)$ for all $v \in V,$ and a direct computation shows that for all $v, w \in V$ we have that $c_{vw} = \pi(v)(c_w) \cdot c_v.$ Thus, from $(*),$ we have that $c: V \to K$ is a cocycle in the sense that the identity $c_{vw} = c_v \wt{\pi}(v) (c_w)$ holds for all $v, w \in V.$ 
    
    Therefore, the map $\theta: G(R) \to G(\wt{R}), av \mapsto a \cdot c_v \cdot v$ is a group morphism, and is easily shown to have the map $av \mapsto a \cdot c_v^{-1} \cdot v$ as an inverse.
\end{proof}
\begin{lemma}\label{lem:morphismembedding}
Suppose that $f: n \to n \otimes n = 2n$ is a morphism in $\frc{\cat{SF}},$ so that $\pi(f): K^n \to K^n \times K^n$ is an isomorphism. Then there is an embedding $G(\pi(f)) \hookrightarrow G(R).$
\end{lemma}
\begin{proof}
For each morphism $t: 1 \to n \in \frc{\cat{SF}},$ define a map $\theta = \theta(f,t): G(\pi(f)) \to G(R)$ defined by the formula 
\[
\theta(av) := \pi(t^{-1})(a) \cdot t^{-1} \Psi(v) t, \ av \in G(\pi(f)),
\] where $\Psi: \frc{\cat{SF}} \to \frc{\cat{SF}}$ is the functor defined by mapping $1 \mapsto n$ and $\caret \mapsto f.$ A quick computation shows that $\theta$ is a group morphism, and the injectivity of $
\theta$ follows from the fact that the functor $\Psi$ is faithful.
\end{proof}
\begin{remark}
The embedding $\theta(f,t)$ in the proof of Lemma \ref{lem:morphismembedding} is an isomorphism if and only if the functor $\Psi: \frc{\cat{SF}} \to \frc{\cat{SF}}, 1 \mapsto n, \caret \mapsto f$ restricts to an isomorphism $V \to V_n := \frc{\cat{SF}}(n,n).$ There are cases in which the functor $\Psi$ restricted to $V$ is not a surjection onto $V_p.$

If
\[
\tikzfig{figures/characteristic/groupoidcaret}
\]
then for all $v \in V,$ as noted by Zaremsky \cite{zaremskyoverflow}, one can show that $\Psi(v) \in V$ maps elements of the cantor space with prefix $11$ to an element of the cantor space with $11$ as a subword. Thus, not every element of $V$ is of the form $\Psi(v)$ with $v \in V$, so the embedding $\theta: G(\pi(f)) \hookrightarrow G(R)$ may be proper.
\end{remark}
\begin{corollary}\label{cor:isoperm}
Consider the isomorphism $\wt{R}: K \to K^2, a \mapsto (R_1(a), R_0(a)).$ Then $G(\wt{R}) \cong G(R).$
\end{corollary}
\begin{proof}
Let $f := \sigma \caret \in \frc{\cat{SF}},$ where $\sigma$ is the transposition of $S_2.$ Then the embedding $\theta: G(\pi(f)) = G(\wt{R}) \hookrightarrow G(R)$ defined in the proof of Lemma \ref{lem:morphismembedding} is surjective, since the functor $\Psi: \frc{\cat{SF}} \to \frc{\cat{SF}}$ mapping $1 \mapsto 1$ and $\caret \mapsto f$ satisfies $\Psi \circ \Psi = \text{id}.$
\end{proof}
Lemmas \ref{lem:functoriso}, \ref{lem:inneriso}, and Corollary \ref{cor:isoperm} yield the following.
\begin{proposition}\label{prop:sufficientiso}
Suppose that $R: K \to K^2$ and $\wt{R}: \wt{K} \to \wt{K}^2$ are group isomorphisms. If there is an isomorphism $\kappa: K \xrightarrow{\sim} \wt{K}$, a permutation $\sigma \in S_2$ and an element $\wt{b} \in \wt{K}$ such that 
\[
\wt{R}_{\sigma(i)} =  \kappa R_i \kappa^{-1} \circ  \ad(\wt{b}) \text{ for $i = 0$ and $i = 1$,}
\] then $G \cong \wt{G}$.
\end{proposition}

\section{Contravariant Jones' technology}\label{sec:wreath}
We will now assign a group isomorphism $R(\omega): K(\omega) \to K(\omega)^2$ to each group morphism $\omega: \Gamma^2 \to \Gamma$, which is the best approximation to $\omega$ by an isomorphism (Definition \ref{def:jonesiso}). Thus, a group morphism $\omega: \Gamma^2 \to \Gamma$ will produce a group $G(\omega) := K(\omega) \rtimes V$.

In Subsection \ref{subsec:unipropfunc}, we verify that $R(\omega): K(\omega)^2 \to K(\omega)^2$ satisfies the required universal property, show that the construction of $G(\omega)$ is functorial, and construct obvious isomorphisms between these groups. Next, in Subsection \ref{subsec:wreathproducts}, we restrict our focus to group morphisms of the form $\omega: \Gamma^2 \to \Gamma, (g,h) \mapsto \alpha(g)$ with $\alpha \in \aut(\Gamma)$ and show that these groups $G(\omega)$ are unrestricted twisted permutational wreath products. We then classify these wreath products up to isomorphism in Subsection \ref{subsec:thinclassification}. We finish with Subsection \ref{subsec:endtoaut}, showing that every group $G(\omega)$ coming from a group morphism of the form $\omega: \Gamma^2 \to \Gamma, (g,h) \mapsto \alpha(g)$ with $\alpha \in \text{End}(\Gamma)$ is isomorphic to one of these wreath products, resulting in a weaker classifcation of the groups in this broader class.
\subsection{Functoriality and obvious isomorphisms}\label{subsec:unipropfunc}
Fix a group $\Gamma$ and a group morphism $\omega: \Gamma^2 \to \Gamma.$ 
\begin{definition}\label{def:komega}
Define $K(\omega)$ to be the subgroup of $\prod_{\{0,1\}^*} \Gamma$ consisting of the maps $a: \{0,1\}^* \to \Gamma$ satisfying the following condition:
\[ 
a(u) = \omega(a(u0), a(u1)) \text{ for all } u \in \{0,1\}^*.
\]
\end{definition}
As noted in Section \ref{sec:characteristic}, the set $\{0,1\}^*$ is in bijection with the vertices of the infinite binary tree $t_\infty$, so the elements of $K(\omega)$ can be thought of as decorations of $t_\infty$. We identify each $a \in K(\omega)$ with the diagram
\[
\tikzfig{figures/wreath/decorationwreath}
\]
We obtain a map $R(\omega): K(\omega) \to K(\omega)^2$ given by deleting a caret at the bottom of diagrams of elements of $K(\omega)$. Visually, if $a \in K$, then $R(\omega)(a)$ has diagram
\[
\tikzfig{figures/wreath/wreathdecmap}
\] which we interpret as as an element of $K(\omega)^2$. The inverse of $R(\omega)$ is given by placing a caret at the bottom of a pair $(a,a') \in K(\omega)^2$, and decorating the bottom of this caret with the element $\omega(a(0),a'(0)) \in \Gamma$.
\begin{lemma}\label{lem:uniprop}
The isomorphism $R(\omega) : K(\omega) \to K(\omega)^2$ along with the map $K(\omega) \to \Gamma$ given by evaluation at $\epsilon \in \{0,1\}^*$ forms the Jones isomorphism for $\omega: \Gamma \to \Gamma^2$ (Definition \ref{def:jonesiso}).
\end{lemma}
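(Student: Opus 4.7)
The plan is to split the verification into three conceptual pieces: that $R(\omega)$ is a well-defined group isomorphism, that the evaluation map $\mathrm{ev}_\epsilon: K(\omega) \to \Gamma$ intertwines $R(\omega)$ with $\omega$, and that this pair satisfies the Jones universal property.

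For the first piece, I would note that if $a \in K(\omega)$, then the two halves $a_0(u) := a(0u)$ and $a_1(u) := a(1u)$ inherit the coherence condition directly from $a$, so $R(\omega)$ takes values in $K(\omega)^2$; conversely, the prescribed inverse builds a decoration whose new root value is forced by coherence at the root and whose remaining structure is inherited from the input pair. Both maps are group homomorphisms because multiplication in $K(\omega)$ is pointwise. The intertwining identity is then immediate: for $a \in K(\omega)$,
\[
\omega(\mathrm{ev}_\epsilon(a_0), \mathrm{ev}_\epsilon(a_1)) = \omega(a(0), a(1)) = a(\epsilon)
\]
by the defining relation.

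The main work, and the step I expect to be the principal obstacle, is the universal property. Given another triple $(H, \phi, \psi)$ consisting of a group $H$, an isomorphism $\phi: H \to H^2$, and a homomorphism $\psi: H \to \Gamma$ with $\psi = \omega \circ (\psi \times \psi) \circ \phi$, I would define the comparison map $F: H \to K(\omega)$ by $F(h)(u) := \psi(\phi_u(h))$, where $\phi_u: H \to H$ is defined recursively by $\phi_\epsilon := \mathrm{id}_H$ and $\phi_{ui} := \pi_i \circ \phi \circ \phi_u$ for $i \in \{0,1\}$. The coherence condition for $F(h)$ then reduces, after unravelling $\phi_{u0}$ and $\phi_{u1}$, to the hypothesized intertwining identity applied to $\phi_u(h)$. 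That $F$ is a homomorphism and is compatible with $\mathrm{ev}_\epsilon$ and $R(\omega)$ is immediate from the definitions.

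Uniqueness is the cleanest step: any compatible map must send $h \mapsto a$ with $a(\epsilon) = \psi(h)$, after which compatibility with $R(\omega)$ forces the formula at every deeper node by induction on $|u|$. The expected sticking point is purely bookkeeping around the recursive definition of $\phi_u$ and tracking the direction of the recursion consistently with the tree convention; the group-theoretic content is carried entirely by the single intertwining identity.
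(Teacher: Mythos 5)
Your proposal is correct and takes essentially the same approach as the paper: your comparison map $F(h)(u) := \psi(\phi_u(h))$ is precisely the paper's map $u \mapsto p \circ \widetilde{R}_u(\widetilde{a})$, and the coherence check and uniqueness-by-induction are the same verifications the paper leaves as "quick to check." The preliminary verification that $R(\omega)$ is a well-defined isomorphism intertwined by evaluation at $\epsilon$ is established in the discussion preceding the lemma rather than in its proof, but including it does not change the argument.
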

\begin{proof}
Suppose that $\wt{R} : \wt{K} \xrightarrow{\sim} \wt{K}^2$ is an isomorphism of groups, and that $p: \wt{K} \to \Gamma$ is group morphism making the diagram
    \[
    \begin{tikzcd}
    \wt{K} \arrow[d, "p"'] \arrow[r, "\wt{R}"] & \wt{K}^2 \arrow[d, "p \times p"] \\
    \Gamma                                     & \Gamma^2 \arrow[l, "\omega"]    
    \end{tikzcd}
    \]
commute. Define a map $\psi: \wt{K} \to K(\omega)$ sending each $\wt{a} \in K$ to the map $\psi(\wt{a}): \{0,1\}^* \to \Gamma, u \mapsto p \circ \wt{R}_u(\wt{a}).$ It is quick to check that $\psi$ is the unique group morphism making the required diagram commute.
\end{proof}
If $\omega: \Gamma^2 \to \Gamma$ is already an isomorphism, Lemma \ref{lem:uniprop} ensures that $K(\omega)$ is canonically isomorphic to $\Gamma$. Following the notation of Definition \ref{def:jonesiso} and Section \ref{sec:characteristic}, we denote by $\pi(\omega): \frc{\cat{SF}} \to \cat{Grp}$ the functor defined by $\pi(\omega)(\caret) := R(\omega)$ and define $G(\omega) := K(\omega) \rtimes V.$ By definition, we have that $G(\omega) = G(R(\omega))$. 

Since $\pi(\omega): \frc{\cat{SF}} \to \cat{Grp}$ is monoidal, for each $t \in \mathfrak{T}$ and $a \in K$, the isomorphism of groups $\pi(\omega)(t): K(\omega) \to K(\omega)^{\ell(t)}$ is given by deleting a copy of $t$ at the bottom of each diagram in $K(\omega)$. As a result letting $l_1,\dots,l_n$ be the leaves of $t$, we have that
\[ \pi(\omega)(t)(a) = (a_1,\dots,a_n),\] where for each $1 \leq i \leq n$, \[a_i(u) = a(l_i u) \text{ for all $u \in \{0,1\}^*$.}\]
We immediately obtain a formula for the action $V \curvearrowright K(\omega).$
\begin{lemma}\label{lem:contrajonesformula}
Suppose that $a \in K(\omega), \ v = s^{-1} \sigma t \in V,$ and $1 \leq i \leq \ell(t).$ Let $l_t^i$ and $l_s^{\sigma^{-1}(i)}$ be the $i^{\text{th}}$ leaf of $t$ and the $\sigma^{-1}(i)^{\text{th}}$ leaf of $s,$ respectively. Then \[\pi(v)(a)(l_s^i u) = a(l_t^{\sigma^{-1}(i)} u), \ u \in \{0,1\}^*.\]
\end{lemma}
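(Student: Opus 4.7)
The plan is to unpack $\pi(v)$ by writing $v = s^{-1}\sigma t$ as a composition in $\frc{\cat{SF}}$ and applying the monoidal functor $\pi(\omega)$ termwise. Concretely, I would decompose
\[
\pi(v) \;=\; \pi(\omega)(s)^{-1} \,\circ\, \sigma_* \,\circ\, \pi(\omega)(t),
\]
where $\sigma_* : K(\omega)^n \to K(\omega)^n$ (with $n = \ell(t) = \ell(s)$) is the coordinate permutation sending $(b_1,\dots,b_n)$ to $(b_{\sigma^{-1}(1)}, \dots, b_{\sigma^{-1}(n)})$. This factorization is immediate from the monoidality of $\pi(\omega)$ and the presentation of $V$ as tree-permutation-tree triples.

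From here the computation is a three-step substitution. First, the displayed leaf-restriction formula stated immediately before the lemma gives $\pi(\omega)(t)(a) = (a_1,\dots,a_n)$ with $a_j(u) = a(l_t^j u)$. Second, $\sigma_*$ replaces the $i$-th entry of this tuple by $a_{\sigma^{-1}(i)}$. Third, applying $\pi(\omega)(s)^{-1}$ — equivalently, the same displayed formula read in reverse for $s$ — produces the unique $b \in K(\omega)$ whose restriction below $l_s^i$ is $a_{\sigma^{-1}(i)}$, so that
\[
b(l_s^i u) \;=\; a_{\sigma^{-1}(i)}(u) \;=\; a(l_t^{\sigma^{-1}(i)} u),
\]
which is the claimed identity.

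The main obstacle is pure bookkeeping: fixing the convention for how $\sigma$ permutes the factors of $K(\omega)^n$ so that the indices $l_s^i$ and $l_t^{\sigma^{-1}(i)}$ line up as in the statement. Once this convention is pinned down — and it is essentially forced by the standard action of $V$ on binary sequences through tree-permutation-tree triples — there is no further content beyond the three-step chain above, and no subtlety involving the group morphism $\omega$ itself (which enters only through the definition of $K(\omega)$, already built into the formula for $\pi(\omega)(t)$).
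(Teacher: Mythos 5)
Your proposal is correct and is exactly the argument the paper has in mind: the paper gives no explicit proof, stating only that the lemma follows ``immediately'' from the displayed leaf-restriction formula for $\pi(\omega)(t)$, and your three-step factorization $\pi(v) = \pi(\omega)(s)^{-1} \circ \sigma_* \circ \pi(\omega)(t)$ with the convention $\sigma_*(b_1,\dots,b_n) = (b_{\sigma^{-1}(1)},\dots,b_{\sigma^{-1}(n)})$ is precisely the bookkeeping being suppressed. Your index chase lands on the stated identity, so nothing is missing.
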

\begin{lemma}\label{lem:contrafunctoriality}
   Suppose that $\wt{\omega}: \wt{\Gamma}^2 \to \wt{\Gamma}$ is a group morphism and that $\beta: \Gamma \to \wt{\Gamma}$ is a group morphism making the diagram
  \[\begin{tikzcd}
	{\Gamma^2} & \Gamma \\
	{\wt{\Gamma}^2} & {\wt{\Gamma}}
	\arrow["\omega", from=1-1, to=1-2]
	\arrow["{\wt{\omega}}"', from=2-1, to=2-2]
	\arrow["{\beta^2}"', from=1-1, to=2-1]
	\arrow["\beta", from=1-2, to=2-2]
\end{tikzcd}\]
   commute. Then the map $\kappa: K \to \wt{K}$ defined by the formula \[\kappa(a)(u) := \beta(a(u)), \ u \in \{0,1\}^*\] is an equivariant group morphism, and thus extends to a group morphism $\theta: G \to \wt{G}, av \mapsto \kappa(a) v.$
\end{lemma}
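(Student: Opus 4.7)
The plan is to verify three things about $\kappa$ in turn: that it actually lands in $\wt{K} = K(\wt{\omega})$, that it is a group morphism, and that it is $V$-equivariant. Once those are established, the extension to a morphism $\theta : G \to \wt{G}$ is a formal consequence of the universal property of semidirect products, since $V$ acts on both sides and $\kappa$ intertwines the actions.

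For well-definedness I would pick an arbitrary $a \in K(\omega)$ and $u \in \{0,1\}^*$ and compute
\[
\kappa(a)(u) = \beta(a(u)) = \beta\bigl(\omega(a(u0), a(u1))\bigr) = \wt{\omega}\bigl(\beta(a(u0)), \beta(a(u1))\bigr) = \wt{\omega}\bigl(\kappa(a)(u0), \kappa(a)(u1)\bigr),
\]
where the second equality uses that $a \in K(\omega)$ and the third is precisely the commutativity of the given square. So $\kappa(a)$ satisfies the defining relation of $K(\wt{\omega})$.

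That $\kappa$ is a group morphism is immediate since the group law on both $K(\omega)$ and $K(\wt{\omega})$ is pointwise and $\beta$ is a group morphism. For $V$-equivariance, I would invoke Lemma \ref{lem:contrajonesformula}: for $v = s^{-1}\sigma t \in V$ and a leaf $l_s^i$ of $s$,
\[
\kappa(\pi(\omega)(v)(a))(l_s^i u) = \beta\bigl(\pi(\omega)(v)(a)(l_s^i u)\bigr) = \beta\bigl(a(l_t^{\sigma^{-1}(i)} u)\bigr) = \kappa(a)(l_t^{\sigma^{-1}(i)} u) = \pi(\wt{\omega})(v)(\kappa(a))(l_s^i u),
\]
where the last equality again uses Lemma \ref{lem:contrajonesformula}, applied in $\wt{K}$ this time. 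Since every element of $\{0,1\}^*$ has the form $l_s^i u$ for some $i$ and some $u$, this shows $\kappa \circ \pi(\omega)(v) = \pi(\wt{\omega})(v) \circ \kappa$.

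Finally, with $\kappa$ an equivariant group morphism between the normal factors, the map $\theta(av) := \kappa(a) v$ is a well-defined group morphism $G \to \wt{G}$; multiplicativity follows from equivariance in the usual way, namely $\theta(av \cdot a'v') = \theta(a \cdot \pi(\omega)(v)(a') \cdot vv') = \kappa(a)\kappa(\pi(\omega)(v)(a')) vv' = \kappa(a) \pi(\wt{\omega})(v)(\kappa(a')) vv' = \theta(av)\theta(a'v')$. I do not anticipate any real obstacle: every step is forced by the hypotheses, and the only nontrivial input is the commutative square, which is used exactly once to check that $\kappa$ takes values in $\wt{K}$.
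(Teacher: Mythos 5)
Your proof is correct, but it takes a genuinely different route from the paper. The paper does not verify anything by hand: it applies the universal property established in Lemma \ref{lem:uniprop}, obtaining a unique group morphism $K(\omega) \to K(\wt{\omega})$ compatible with $R(\omega)$, $R(\wt{\omega})$ and the evaluation maps, observes that this morphism is exactly $\kappa$, and then cites a general result (Lemma \ref{lem:functoriso}) to get $V$-equivariance and the extension to $G \to \wt{G}$ for free. You instead check directly that $\kappa$ lands in $\wt{K}$ (using the commutative square once, exactly as you say), that it is a pointwise group morphism, and that it intertwines the actions via the explicit formula of Lemma \ref{lem:contrajonesformula}, finishing with the standard semidirect-product computation. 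Your version is more elementary and self-contained; the paper's is shorter and makes visible that $\kappa$ is the canonical map produced by the Jones machinery, which is what makes the functoriality statements of Remark \ref{rmk:functoriality} routine. One small imprecision in your equivariance step: it is not true that every word in $\{0,1\}^*$ has the form $l_s^i u$ --- the proper prefixes of the leaves of $s$ (the interior vertices and the root) do not. This is harmless, because an element of $K(\wt{\omega})$ is determined by its values on the words $l_s^i u$ via the defining recursion $a(w) = \wt{\omega}(a(w0), a(w1))$, so two elements agreeing on all such words agree everywhere; but you should say that rather than the literal claim you made.
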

\begin{proof}
    By Lemma \ref{lem:uniprop}, there is a unique group morphism $K(\omega) \to K(\wt{\omega})$ making the diagram 
        \[
        \begin{tikzcd}
        \Gamma \arrow[ddd, "\beta"', bend right]                     & \Gamma^2 \arrow[l, "\omega"'] \arrow[ddd, "\beta \times \beta", bend left] \\
        K(\omega) \arrow[d, dashed] \arrow[r, "R(\omega)"] \arrow[u] & K(\omega)^2 \arrow[d] \arrow[u]                                            \\
        K(\wt{\omega}) \arrow[r, "R(\wt{\omega})"'] \arrow[d]        & K(\wt{\omega})^2 \arrow[d]                                                 \\
        \wt{\Gamma}                                                  & \wt{\Gamma}^2 \arrow[l, "\wt{\omega}"]                                    
        \end{tikzcd}
        \]
    commute. Here the maps $K(\omega) \to \Gamma$ and $K(\wt{\omega}) \to \wt{\Gamma}$ are the evaluation maps at $\epsilon \in \{0,1\}^*$, and the maps $K(\omega)^2 \to \Gamma^2$, $K(\wt{\omega})^2 \to \wt{\Gamma}^2$ are the squares of these maps.

    The group morphism $K(\omega) \to K(\wt{\omega})$ corresponding to the dashed line in the above diagram is exactly $\kappa$. This map is precisely $\kappa: K(\omega) \to K(\wt{\omega})$. By Lemma \ref{lem:functoriso}, $\kappa$ is equivariant with respect to the actions $V \curvearrowright K, \wt{K}$, and thus extends to a group morphism $G \to \wt{G}$ which acts as the identity on $V$.
\end{proof}
\begin{corollary}\label{cor:omegafunctoriso}
    If $\wt{\Gamma}$ is a group and $\beta: \Gamma \to \wt{\Gamma}$ is a group isomorphism, then $G(\omega) \cong G(\wt{\omega}),$ where $\wt{\omega} := \beta^{-1} \omega (\beta \times \beta).$
\end{corollary}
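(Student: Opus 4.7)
The plan is to apply Lemma \ref{lem:contrafunctoriality} twice — once with $\beta$ and once with $\beta^{-1}$ — and then observe that the two resulting group morphisms are mutually inverse. First I would verify that $\beta : \Gamma \to \wt{\Gamma}$ satisfies the hypothesis of Lemma \ref{lem:contrafunctoriality}, namely the identity $\beta \circ \omega = \wt{\omega} \circ (\beta \times \beta)$, which is immediate from the definition of $\wt{\omega}$ after rearranging the $\beta^{\pm 1}$ factors. Lemma \ref{lem:contrafunctoriality} then produces a group morphism $\theta : G(\omega) \to G(\wt{\omega})$ whose restriction to $K(\omega)$ is pointwise postcomposition of decorations with $\beta$ and which is the identity on $V$.

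Next I would repeat the argument with the roles of $\omega$ and $\wt{\omega}$ swapped, using $\beta^{-1} : \wt{\Gamma} \to \Gamma$ in place of $\beta$; the corresponding commuting square follows formally from the previous one by conjugating with $\beta^{\pm 1}$ on each side. Lemma \ref{lem:contrafunctoriality} then yields a group morphism $\theta' : G(\wt{\omega}) \to G(\omega)$ that acts as the identity on $V$ and as pointwise postcomposition with $\beta^{-1}$ on decorations.

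The last step is to check that $\theta$ and $\theta'$ are mutually inverse. On the $V$ factor this is immediate, and on decorations the two compositions reduce to postcomposing pointwise with $\beta^{-1} \circ \beta$ or $\beta \circ \beta^{-1}$, each of which is the identity. I do not anticipate any real obstacle: the corollary is a formal consequence of the functoriality established in Lemma \ref{lem:contrafunctoriality}, with only the bookkeeping of the two commuting squares and the verification of mutual inversion on generators requiring attention.
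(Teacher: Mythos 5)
Your proposal is correct and is exactly the argument the paper intends: the corollary is stated without proof as an immediate consequence of Lemma \ref{lem:contrafunctoriality}, obtained by applying that lemma to the commuting squares for $\beta$ and $\beta^{-1}$ and noting that the two induced morphisms (postcomposition of decorations with $\beta^{\pm 1}$, identity on $V$) are mutually inverse. The only point needing care is the direction of $\beta$ in the formula for $\wt{\omega}$ (as written it only type-checks if read as $\beta\,\omega\,(\beta\times\beta)^{-1}$), which you implicitly handle when verifying the commuting square.
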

\begin{remark}\label{rmk:functoriality}
    Suppose that we have a commutative diagram of groups
        \[
        D = \begin{tikzcd}
        \Gamma \arrow[r, "\alpha"] \arrow[d, swap,  "\psi"] & \Gamma^2 \arrow[d, "\psi \times \psi"]                     \\
        \wt{\Gamma}                          & \wt{\Gamma}^2 \arrow[l, "\wt{\omega}"]
        \end{tikzcd} \tag{I}
        \]
 Retaining the notation of Remark \ref{rmk:covariantmonoidalexample}, let $\pi, \wt{\pi}: \frc{\cat{SF}} \to \cat{Grp}$ be the functors obtained from the isomorphisms $R(\alpha): K(\alpha) \to K(\alpha)^2$ and $R(\wt{\omega}): K(\wt{\omega}) \to K(\wt{\omega})^2$ respectively. For each $[t,g] \in K(\alpha),$ we may consider the element  \[K(D)([t,g]) := \wt{\pi}(t)^{-1}(a_1, \dots, a_{\ell(t)}) \in K(\wt{\omega}),\] where the $a_i \in K(\wt{\omega})$ are defined by the formula \[ a_i(u) := \psi(\alpha_u(g_i)), \ u \in \{0,1\}^*,  1 \leq i \leq \ell(t).\] One can check that the assignment $K(D): K(\alpha) \to K(\wt{\omega}), [t,g] \mapsto K(D)([t,g])$ is a well-defined group morphism that is equivariant with respect to the actions $V \curvearrowright K(\alpha), K(\wt{\omega}).$ Thus, we may extend $K(D)$ to a group morphism $G(D): G(\alpha) \to G(\wt{\omega}).$

Similarly, a diagram 
\[
D = \begin{tikzcd}
\Gamma \arrow[d, swap, "\psi"]                     & \Gamma^2 \arrow[l, swap, "\omega"] \arrow[d, "\psi \times \psi"] \\
\wt{\Gamma} \arrow[r, swap, "\wt{\alpha}"] & \wt{\Gamma}^2                         
\end{tikzcd} \tag{II}
\]
induces a group morphism $G(D): G(\omega) \to G(\wt{\alpha})$ via the assigment \[K(\omega) \to K(\wt{\alpha}), a \mapsto [\ |,  \psi(a(\epsilon))], \] where we note that \[ \left[t, \psi^{\ell(t)} \left( a_l \right)_{l \in \text{Leaf}(t)}\right] = [ \ | , \psi(a(\epsilon))]\] for all $t \in \mathfrak{T}.$

In Lemma \ref{lem:contrafunctoriality}, we saw that a commutative diagram 
\[
D = 
\begin{tikzcd}
\Gamma \arrow[d, swap, "\psi"] & \Gamma^2 \arrow[l, "\omega"'] \arrow[d, "\psi \times \psi"] \\
\wt{\Gamma}              & \wt{\Gamma}^2 \arrow[l, "\wt{\omega}"]                     
\end{tikzcd} \tag{III}
\] yields a map $G(\omega) \to G(\wt{\omega}),$ which we denote by $G(D).$ Finally, it was observed in \cite{tan16} that a commutative diagram 
\[
D =
\begin{tikzcd}
\Gamma \arrow[d, swap, "\psi"] \arrow[r, swap, "\alpha"'] & \Gamma^2  \arrow[d, "\psi \times \psi"] \\
\wt{\Gamma} \arrow[r, swap, "\wt{\alpha}"]               & \wt{\Gamma}^2                     
\end{tikzcd} \tag{IV}
\] yields a map $G(D): G(\alpha) \to G(\wt{\alpha})$ defined by the formula \[ G(D)[t,g] = [t, \psi^{\ell(t)}(g)].\] Consider now the category $\cat{M}$ whose objects are either morphisms of the form $\alpha: \Gamma \to \Gamma^2$ or $\omega: \Gamma^2 \to \Gamma$ in $\cat{Grp}$, and whose morphisms are commutative squares such as (I) through (IV). Tanushevski proves that the map of categories $G: \cat{M} \to \cat{Grp}$ restricts to a functor on the subcategory of $\cat{M}$ given by diagrams $D$ such as (IV); similar computations verify that $G$ is indeed a functor on all of $\cat{M}$. I.e., the assignment $\alpha: \Gamma \to \Gamma^2 \mapsto G(\alpha)$ of Tanushevski and Brothier along with the map $\omega: \Gamma^2 \to \Gamma \mapsto G(\omega)$ assemble into a functor $\cat{M} \to \cat{Grp}$, with $G(R) \cong G(R^{-1}) \cong K \rtimes V$ when $R: K \to K^2$ is a group isomorphism.

If $\beta$ is an automorphism of a group $\Gamma$, and $\alpha: \Gamma \to \Gamma^2, g \mapsto (\beta(g), e)$,  $\omega: \Gamma^2 \to \Gamma$, $(g,h) \mapsto \beta^{-1}(g)$, then the diagram (I) commutes after letting $\wt{\omega} = \omega$ and taking $\psi = \text{id}_\Gamma$. By applying the functor $G$, we obtain an embedding $G(\alpha) \hookrightarrow G(\omega)$, which identifies $G(\alpha)$ with the restricted twisted permutational wreath product $\oplus_{\QQ_2} \Gamma \rtimes V$ after identifying $G(\omega)$ with the corresponding unrestricted wreath product (Corollary \ref{cor:wreathiso}).

If instead $\alpha(g) = (g,\beta(g))$ and $\omega(g,h) = g$ for all $g,h \in \Gamma$, the functor $G$ yields an embedding $G(\alpha) \hookrightarrow G(\omega)$. In \cite{jonestechII}, Brothier showed that $G(\alpha)$ is isomorphic to a semidirect product $L \Gamma \rtimes V$, where $L \Gamma$ are the continouous maps $\cantor \to \Gamma$, the latter equipped with the discrete topology. The action $V \curvearrowright L \Gamma$ is given by translation, and twisting by the automorphism $\beta$. However, Corollary \ref{cor:wreathiso} identifies $G(\omega)$ with an untwisted wreath product $\prod_{\QQ_2} \Gamma \rtimes V$, so this embedding is somewhat surprising. This embedding ``untwists" the action $V \curvearrowright L \Gamma$, at the cost of the image of a map of $L \Gamma$ in $\prod_{\QQ_2} \Gamma$ no longer being continuous in general.
\end{remark}
\begin{lemma}\label{lem:omegainneriso}
    Suppose that $h \in \Gamma$ and that there is a map $b: \{0,1\}^* \to \Gamma$ satisfying the formula \[ b(u) = h \cdot \omega(b(u0), b(u1)), \text{ for all $ u \in \{0,1\}^*$.} \tag{$*$}\] Then $G(\omega) \cong G(\ad(h) \circ \omega).$
\end{lemma}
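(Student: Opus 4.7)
The plan is to build an explicit isomorphism $\Phi \colon G(\omega) \to G(\ad(h) \circ \omega)$ by ``conjugating pointwise by $b$'' and then correcting the resulting failure of $V$-equivariance with a $1$-cocycle. Throughout, write $\omega' := \ad(h) \circ \omega$.

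First, I define a $K$-level map $\phi \colon K(\omega) \to K(\omega')$ by $\phi(a)(u) := b(u)\,a(u)\,b(u)^{-1}$. Equation $(*)$ is equivalent to $\omega(b(u0), b(u1)) = h^{-1} b(u)$; inserting this together with the multiplicativity of $\omega$ on each factor of $\Gamma \times \Gamma$ gives, by a single substitution, $\phi(a)(u) = h\,\omega(\phi(a)(u0), \phi(a)(u1))\,h^{-1}$, so $\phi(a) \in K(\omega')$. Pointwise conjugation is a group homomorphism, and the symmetric calculation shows $c \mapsto b^{-1}\,c\,b$ is a two-sided inverse, so $\phi$ is a group isomorphism.

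Because $b$ is not $V$-invariant, $\phi$ fails to be $V$-equivariant, which I correct with a cocycle $z \colon V \to K(\omega')$ defined, for $v = s^{-1} \sigma t \in V$, by
\[
 z(v)(l_s^i u) := b(l_s^i u) \cdot b(l_t^{\sigma^{-1}(i)} u)^{-1}, \qquad 1 \leq i \leq \ell(s),\ u \in \{0,1\}^*,
\]
and extended to strict prefixes of the $l_s^i$ via the $K(\omega')$-relation. The identity $\omega(b(w0), b(w1)) = h^{-1} b(w)$ shows that the displayed formula is already compatible with the $K(\omega')$-relation wherever it is directly defined, so $z(v) \in K(\omega')$; a standard refinement check then shows independence of the representative $(s, \sigma, t)$. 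Reading the formula as $z(v)(u) = b(u) \cdot b(v^{-1}(u))^{-1}$, the cocycle identity $z(vv') = z(v) \cdot (v \cdot z(v'))$ is a one-line telescoping using $b(v^{-1}(u))^{-1} \cdot b(v^{-1}(u)) = e$, and the twisted-equivariance identity $\phi(v \cdot a) = z(v) \cdot (v \cdot \phi(a)) \cdot z(v)^{-1}$ falls out from the same cancellation of neighbouring $b$-factors.

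The cocycle relation makes $v \mapsto z(v) \cdot v$ a homomorphism $V \to G(\omega')$, and combined with the twisted equivariance it shows that $\Phi(av) := \phi(a) \cdot z(v) \cdot v$ is a group homomorphism $G(\omega) \to G(\omega')$. It is bijective since it restricts to the isomorphism $\phi$ on the $K$-parts and projects to the identity on $V$. The main obstacle in this plan is the first verification in the preceding paragraph — that $z(v)$ genuinely defines an element of $K(\omega')$ independent of the chosen tree-pair representative of $v$ — and this is the one step where the explicit equation $(*)$ really has to be propagated through the tree-indexed bookkeeping.
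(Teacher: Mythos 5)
Your proof is correct and takes essentially the same route as the paper: the paper likewise conjugates pointwise by $b$ to get an isomorphism $K(\omega) \to K(\ad(h)\circ\omega)$ and records the resulting failure of equivariance via the elements $d^i(u) = b(u)\cdot b(iu)^{-1}$, which are exactly the generators of your cocycle $z(v)(u) = b(u)\cdot b(v^{-1}(u))^{-1}$. The only difference is that the paper delegates the cocycle bookkeeping to its general Proposition \ref{prop:sufficientiso}, whereas you carry out that verification explicitly.
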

\begin{proof}
    Let $\wt{\omega} := \ad(h) \circ \omega.$ Consider the group isomorphism $\theta: K \to \wt{K}$ given by the formula \[ \theta(a)(u) = \ad(b(u))(a(u)), \ a \in K, \ u \in \{0,1\}^*.\] Here the condition $(*)$ ensures that $\theta$ takes values in $\wt{K}.$ Define maps $d^0, d^1: \{0,1\}^* \to \Gamma$ given by the formulae \[ d^i(u) := b(u) \cdot b(iu)^{-1}, \ u \in \{0,1\}^*, \ i \in \{0,1\}. \] Then $d^0, d^1 \in \wt{K},$ and a quick computation shows that \[ \theta R_i \theta^{-1} = \ad(d^i) \wt{R}_i, \ i \in \{0,1\}. \] Letting $R' := (\theta \times \theta) \circ R \circ \theta^{-1},$ we then have that $R' = \ad(d^0,d^1) \circ \wt{R},$ and so by Proposition \ref{prop:sufficientiso} it follows that $G(R) \cong G(R') \cong G(\wt{R}).$
\end{proof} If $\omega: \Gamma^2 \to \Gamma$ is surjective, we can recursively build a map $b: \{0,1\}^* \to \Gamma$ satisfying condition $(*)$ of Lemma \ref{lem:omegainneriso}, satisfying $b(\epsilon) := e_\Gamma$ and $\omega(b(u0), b(u1)) = h^{-1} \cdot b(u)$ for all $u \in \{0,1\}^*.$
\begin{corollary}\label{cor:surjinneriso}
    If $\omega: \Gamma^2 \to \Gamma$ is surjective, then $G(\ad(h) \circ \omega) \cong G(\omega)$ for all $h \in \Gamma.$
\end{corollary}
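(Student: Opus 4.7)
The plan is to reduce this corollary to Lemma \ref{lem:omegainneriso}: that lemma produces the isomorphism $G(\omega) \cong G(\ad(h) \circ \omega)$ as soon as there exists a map $b: \{0,1\}^* \to \Gamma$ satisfying condition $(*)$. So the entire content of the corollary is the construction of such a $b$ under the extra hypothesis that $\omega$ is surjective.

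I would construct $b$ by recursion on the length of $u$. In the base case, set $b(\epsilon) := e_\Gamma$. For the inductive step, assume $b$ has been defined on all words of length at most $n$, and let $u$ have length $n$. Since $\omega$ is surjective, the fibre $\omega^{-1}(h^{-1} \cdot b(u))$ is nonempty, so one may pick any pair $(g_0, g_1) \in \Gamma^2$ with $\omega(g_0, g_1) = h^{-1} \cdot b(u)$ and declare $b(u0) := g_0$ and $b(u1) := g_1$. Doing this for every word of length $n$ extends $b$ to all words of length $n+1$, and by construction $b(u) = h \cdot \omega(b(u0), b(u1))$ for every $u \in \{0,1\}^*$, which is exactly condition $(*)$; countably many choices are made, so the argument rests on a very mild form of the axiom of dependent choice.

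There is no real obstacle here: condition $(*)$ constrains $b(u)$ only through the two values immediately below it in the tree, so the top-down recursion never needs to revisit earlier choices, and surjectivity of $\omega$ guarantees that the recursion never gets stuck. Feeding the resulting $b$ into Lemma \ref{lem:omegainneriso} then yields the desired isomorphism $G(\omega) \cong G(\ad(h) \circ \omega)$, completing the proof.
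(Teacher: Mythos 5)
Your proof is correct and matches the paper's argument exactly: the paper likewise notes (in the paragraph preceding the corollary) that surjectivity of $\omega$ lets one recursively build $b$ with $b(\epsilon) = e_\Gamma$ and $\omega(b(u0), b(u1)) = h^{-1}\cdot b(u)$, and then invokes Lemma \ref{lem:omegainneriso}. Your write-up just makes the top-down recursion and the (mild) use of choice explicit.
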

\begin{lemma}\label{lem:omegasubgroup}
    There is a subgroup $\Gamma_\omega \leq \Gamma$ such that $\omega: \Gamma \times \Gamma \to \Gamma$ restricts to a surjective group morphism $\omega_{\mathrm{res}}: \Gamma_\omega \times \Gamma_\omega \to \Gamma_\omega$, and $G(\omega) \cong G(\omega_{\mathrm{res}})$.
\end{lemma}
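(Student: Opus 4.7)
The plan is to take $\Gamma_\omega$ to be the image of the evaluation map at the root, namely $\Gamma_\omega := \{a(\epsilon) : a \in K(\omega)\}$. Since evaluation at $\epsilon$ is a group morphism $K(\omega) \to \Gamma$, this is automatically a subgroup of $\Gamma$. I would then verify three things in order: that $\omega$ maps $\Gamma_\omega \times \Gamma_\omega$ into $\Gamma_\omega$; that the induced restriction $\omega_{\mathrm{res}}$ is surjective; and, crucially, that $K(\omega) = K(\omega_{\mathrm{res}})$ as subgroups of $\prod_{\{0,1\}^*} \Gamma$. Once this equality holds, the conclusion $G(\omega) \cong G(\omega_{\mathrm{res}})$ is immediate, since the action of $V$ from Lemma \ref{lem:contrajonesformula} only permutes values of decorations and so transfers unchanged under the identification.

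For closure, given $a, b \in K(\omega)$ I would build a decoration $c$ by grafting the diagrams of $a$ and $b$ onto a new root labelled $\omega(a(\epsilon), b(\epsilon))$; explicitly, $c(\epsilon) := \omega(a(\epsilon), b(\epsilon))$, $c(0u) := a(u)$ and $c(1u) := b(u)$. A direct check against Definition \ref{def:komega} shows $c \in K(\omega)$, so $\omega(a(\epsilon), b(\epsilon)) = c(\epsilon) \in \Gamma_\omega$. For surjectivity, any $g = a(\epsilon) \in \Gamma_\omega$ satisfies $g = \omega(a(0), a(1))$ by the defining relation, and the two shifted maps $a_i : v \mapsto a(iv)$ visibly lie in $K(\omega)$, placing $a(0), a(1) \in \Gamma_\omega$.

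The content of the lemma then lies in showing that every $a \in K(\omega)$ already takes values in $\Gamma_\omega$. For any $u \in \{0,1\}^*$, the shifted decoration $a_u : v \mapsto a(uv)$ again satisfies the defining relation of Definition \ref{def:komega}, hence lies in $K(\omega)$, so $a(u) = a_u(\epsilon) \in \Gamma_\omega$. Combined with the trivial inclusion $K(\omega_{\mathrm{res}}) \subseteq K(\omega)$ coming from $\Gamma_\omega \subseteq \Gamma$, this yields $K(\omega) = K(\omega_{\mathrm{res}})$, and the desired isomorphism follows.

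I do not anticipate a serious obstacle: the whole argument is driven by the observation that subtree decorations of elements of $K(\omega)$ are themselves elements of $K(\omega)$, once $\Gamma_\omega$ is correctly identified as the image of root evaluation. The only mild choice is to define $\Gamma_\omega$ this way rather than by some intrinsic recipe on $\Gamma$ (such as the intersection of iterated images of $\omega$), which would make the key equality $K(\omega) = K(\omega_{\mathrm{res}})$ less transparent.
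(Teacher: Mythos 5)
Your proof is correct, but it reaches the conclusion by a genuinely different construction of $\Gamma_\omega$ than the paper's. The paper defines $\Gamma_\omega$ intrinsically on $\Gamma$: writing $\omega(g,h) = \omega_0(g)\cdot\omega_1(h)$ with $\omega_0, \omega_1 \in \mathrm{End}(\Gamma)$, it sets $\omega_t := \Phi(t)$ for the contravariant monoidal functor $\Phi$ with $\Phi(\caret) = \omega$, and takes $\Gamma_\omega := \bigcap_{t \in \mathfrak{T}} \mathrm{im}(\omega_t)$; closure under $\omega$ then falls out of the functorial identity $\omega \circ (\omega_t \times \omega_s) = \omega_{(t \otimes s) \circ \caret}$, and the containment $a(u) \in \Gamma_\omega$ for $a \in K(\omega)$ is read off from Definition \ref{def:komega}. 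You instead take $\Gamma_\omega$ to be the image of the root-evaluation morphism $K(\omega) \to \Gamma$, prove closure by grafting two decorations onto a caret, and prove $a(u) \in \Gamma_\omega$ by shifting (your shift $a_u$ is exactly $R(\omega)_u(a)$, so this is consistent with the paper's machinery). Your subgroup is contained in the paper's but need not obviously coincide with it: an element of $\bigcap_t \mathrm{im}(\omega_t)$ admits compatible decorations of every finite depth, but producing an infinite decoration from these would require a compactness-type argument when $\Gamma$ is infinite. The trade-off is the one you anticipated, though it cuts slightly differently than you suggest: the paper's intrinsic definition makes the closure identity structurally transparent, while yours makes surjectivity of $\omega_{\mathrm{res}}$ immediate via $a(\epsilon) = \omega(a_0(\epsilon), a_1(\epsilon))$ --- a point the paper's proof does not address explicitly, and which for the larger subgroup $\bigcap_t \mathrm{im}(\omega_t)$ is not entirely obvious. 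Either choice yields $K(\omega) = K(\omega_{\mathrm{res}})$ and hence $G(\omega) = G(\omega_{\mathrm{res}})$ on the nose, since $R(\omega)$ and $R(\omega_{\mathrm{res}})$ are literally the same caret-deletion map, so both prove the lemma as stated.
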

\begin{proof}
    For each $g \in \Gamma$, write $\omega_0(g) := \omega(g,e)$ and $\omega_1(g) := \omega(e,g)$, obtaining endomorphisms $\omega_0, \omega_1 \in \mathrm{End}(\Gamma)$ such that $\omega(g,h) = \omega_0(g) \cdot \omega_1(h)$ for all $g, h \in \Gamma$. If $u = u_1 \cdots u_n \in \{0,1\}^*$, define $\omega_u := \omega_{u_1} \circ \cdots \circ \omega_{u_n}$. Notice that the order of the letters of $u$ is preserved, unlike in Definition \ref{def:gensupp}. For each tree $t \in \mathfrak{T}$, define a group morphism $\omega_t : \Gamma^{\ell(t)} \to \Gamma$ defined by the formula 
    \[
    \omega_t (g) := \prod_{l \in \Leaf(t)} \omega_l(g_l), \ g: \Leaf(t) \to \Gamma,
    \]
    noting that the order in which the product is taken is immaterial, since the images of $\omega_0$ and $\omega_1$ commute. We note that if $\Phi: \cat{F} \to \cat{Grp}$ is the contravariant monoidal functor defined by $\Phi(\caret) := \omega$, then $\omega_t = \Phi(t)$ for all trees $t$.

    Let $\Gamma_\omega := \bigcap_{t \in \mathfrak{T}} \mathrm{im}(\omega_t)$. The fact that $\omega(g,h) \in \Gamma_\omega$ when $g, h \in \Gamma_\omega$ follows immediately from the equality of maps
    \[
    \omega \circ (\omega_t \times \omega_s) = \omega_{f} : \Gamma^{\ell(t) + \ell(s)} \to \Gamma,
    \] where $f := (t \otimes s) \circ \caret$. By the definition of $K(\omega)$ (Definition \ref{def:komega}), we have that $a(u) \in \Gamma_\omega$ for all $a \in K(\omega)$ and $u \in \{0,1\}^*$. Thus, $K(\omega) = K(\omega_{\mathrm{res}})$, where $\omega_{\mathrm{res}}$ is the restriction of $\omega$ to $\Gamma_\omega \times \Gamma_\omega$, and so $G(\omega) = G(\omega_{\mathrm{res}})$.
\end{proof}
\begin{remark}
    Lemma \ref{lem:omegasubgroup} says that by passing to a subgroup of $\Gamma$, we may assume that $\omega$ is surjective. This is dual to a result of Tanushevski (Proposition 3.2 of \cite{tan16}). Recalling the notation of Remark \ref{rmk:covariantmonoidalexample}, Tanushevski proves that for each group morphism $\alpha: \Gamma \to \Gamma \times \Gamma$, there is a normal subgroup $N \vartriangleleft \Gamma$ such that the map 
        \[
        \overline{\alpha}: \Gamma/N \to \Gamma/N \times \Gamma/N, gN \mapsto (\alpha_0(g)N, \alpha_1(g)N)
        \]
    is well-defined, injective, and satisfies $G(\alpha) \cong G(\overline{\alpha})$. I.e., by passing to a quotient of $\Gamma$, we can assume that $\alpha$ is injective.
\end{remark}
\begin{lemma}\label{lem:omegaperm}
Let $\wt{\omega}: \Gamma^2 \to \Gamma$ be defined by the formula $\wt{\omega}(g,h) := \omega(h,g)$ for all $g, h \in \Gamma$. Then $G(\wt{\omega}) \cong G(\omega)$.
\end{lemma}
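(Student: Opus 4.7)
The plan is to exploit the ``flip every bit'' involution
\[
\tau : \{0,1\}^* \to \{0,1\}^*, \quad u_1 \cdots u_n \mapsto (1-u_1) \cdots (1-u_n),
\]
which systematically swaps the two children at every internal node of $t_\infty$. I would first define $\kappa : K(\omega) \to K(\wt{\omega})$ by $\kappa(a)(u) := a(\tau(u))$. Using the identity $\tau(ui) = \tau(u)(1-i)$ together with the defining recursion for $K(\omega)$, a direct computation yields
\[
\kappa(a)(u) = \omega\bigl(a(\tau(u)0), a(\tau(u)1)\bigr) = \omega\bigl(\kappa(a)(u1), \kappa(a)(u0)\bigr) = \wt{\omega}\bigl(\kappa(a)(u0), \kappa(a)(u1)\bigr),
\]
so $\kappa(a) \in K(\wt{\omega})$; involutivity of $\tau$ then makes $\kappa$ a group isomorphism.

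Second, $\tau$ extends to a homeomorphism of $\cantor$ that preserves the prefix order and hence takes finite antichains and cones to finite antichains and cones. Thus $\tau$ normalizes $V \leq \mathrm{Homeo}(\cantor)$, and $\Phi : V \to V,\ v \mapsto \tau v \tau^{-1}$ is an automorphism of $V$. For $v = s^{-1}\sigma t$ in tree normal form, one can write explicitly $\Phi(v) = (\tau(s))^{-1}\,(\rho_s\,\sigma\,\rho_t^{-1})\,\tau(t)$, where $\tau(t)$ is the mirror tree (obtained by swapping children recursively) and $\rho_t \in S_{\ell(t)}$ is the permutation determined by $\tau(l_t^i) = l_{\tau(t)}^{\rho_t(i)}$.

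Third, I would verify the equivariance $\kappa \circ \pi_\omega(v) = \pi_{\wt{\omega}}(\Phi(v)) \circ \kappa$ using Lemma \ref{lem:contrajonesformula}. Writing a generic input as $w = l_{\tau(s)}^j u'$, one has $\tau(w) = l_s^{\rho_s^{-1}(j)}\,\tau(u')$, so the formula of that lemma gives
\[
\kappa(\pi_\omega(v)(a))(w) \;=\; \pi_\omega(v)(a)(\tau(w)) \;=\; a\bigl(l_t^{\sigma^{-1}\rho_s^{-1}(j)}\,\tau(u')\bigr),
\]
while $\pi_{\wt{\omega}}(\Phi(v))(\kappa(a))(w)$ unfolds to the same expression once the mirror-tree leaves are cancelled against $\rho_t, \rho_s$. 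Combining, the pair $(\kappa, \Phi)$ assembles into the desired isomorphism $G(\omega) \to G(\wt{\omega})$, $av \mapsto \kappa(a)\,\Phi(v)$.

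The main obstacle is the index bookkeeping in the equivariance step: mirroring a tree reverses the left-to-right order of its leaves recursively, so the permutation component of $\Phi(v)$ is a nontrivial conjugate of $\sigma$ by $\rho_t, \rho_s$, and one must confirm that the $\tau$-shift on leaf addresses in $K(\omega)$ exactly compensates for this twist. A cleaner alternative that avoids handling $\Phi$ explicitly would be to invoke Lemma \ref{lem:uniprop} to show that $(K(\omega),\ \mathrm{swap} \circ R(\omega),\ \mathrm{ev}_\epsilon)$ satisfies the Jones property for $\wt{\omega}$ (since $a(\epsilon) = \omega(a(0), a(1)) = \wt{\omega}(a(1), a(0))$), yielding $G(\wt{\omega}) \cong G(\mathrm{swap} \circ R(\omega))$, and then to absorb the coordinate swap into $V$ via a permutation-analogue of Proposition \ref{prop:sufficientiso} to obtain $G(\mathrm{swap} \circ R(\omega)) \cong G(R(\omega)) = G(\omega)$.
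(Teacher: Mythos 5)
Your proposal is correct and rests on the same key map as the paper: the bit-flipping involution $\kappa(a)(u) := a(\neg \, u)$. In fact the paper takes exactly your ``cleaner alternative'': it verifies $\wt{R}_{\sigma(i)} = \kappa R_i \kappa^{-1}$ for $i = 0,1$ and concludes directly from Proposition \ref{prop:sufficientiso} (which already accommodates the coordinate swap), so the explicit mirror automorphism $\Phi$ of $V$ and the attendant leaf-index bookkeeping in your primary route, while workable, are unnecessary.
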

\begin{proof}
    Let $\kappa: K(\omega) \to K(\wt{\omega})$ be the isomorphism of groups defined by the formula 
        \[
        \kappa(a)(u) := a(\neg \ u) \text{ for all  $u \in \{0,1\}^*$},
        \] where $\neg \ 0 := 1$, $\neg \ 1 := 0$, and $\neg \ u$ is the word in $\{0,1\}^*$ given by applying $\neg$ to each digit of $u$. It is quickly checked that 
        \[
        \wt{R}_{\sigma(i)} = \kappa R_i \kappa^{-1} \text{ for $i = 0, 1$.}
        \] Thus, $G \cong \wt{G}$ by Proposition \ref{prop:sufficientiso}.
\end{proof}
Corollaries \ref{cor:omegafunctoriso}, \ref{cor:surjinneriso} and Lemmas \ref{lem:omegainneriso}, \ref{lem:omegasubgroup}, \ref{lem:omegaperm} yield the following.
\begin{proposition}\label{prop:sufficientomegaiso}
    Suppose that $\wt{\omega}: \wt{\Gamma}^2 \to \wt{\Gamma}$ is a group morphism satisfying 
    \[
    \wt{\omega}\left(\widetilde{g}_1,\widetilde{g}_2\right) = \beta \circ \ad(g) \circ \omega \circ (\beta \times \beta)^{-1}\left(\wt{g}_{\sigma(1)},\wt{g}_{\sigma(2)}\right) \text{ for all $\wt{g}_1, \wt{g}_2 \in \wt{\Gamma}$,}
    \] for some $\sigma \in S_2$, $g \in \Gamma_\omega$ and an isomorphism $\beta: \Gamma \to \wt{\Gamma}$. Then $G \cong \wt{G}$.
\end{proposition}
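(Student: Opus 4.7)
The plan is to chain together the preceding results to peel off, in turn, the isomorphism $\beta$, the permutation $\sigma$, and the inner automorphism $\ad(g)$ from the formula defining $\wt{\omega}$.

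First, I would apply Corollary~\ref{cor:omegafunctoriso} to the isomorphism $\beta \colon \Gamma \to \wt{\Gamma}$, replacing $\wt{\omega}$ by its transport
\[
\omega_1(g_1,g_2) := \beta^{-1}\bigl(\wt{\omega}(\beta(g_1),\beta(g_2))\bigr) = \ad(g)\circ \omega\bigl(g_{\sigma(1)}, g_{\sigma(2)}\bigr),
\]
so that $G(\wt{\omega}) \cong G(\omega_1)$. The equality is a direct substitution using the hypothesis and the cancellation $\beta^{-1}\beta = \id$.

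Second, I would strip off $\sigma$. If $\sigma$ is nontrivial, Lemma~\ref{lem:omegaperm} gives $G(\omega_1) \cong G(\omega_2)$, where $\omega_2(g_1,g_2) := \omega_1(g_{\sigma(1)}, g_{\sigma(2)}) = \ad(g) \circ \omega(g_1,g_2)$; if $\sigma$ is trivial, one sets $\omega_2 := \omega_1$. Either way, $\omega_2 = \ad(g) \circ \omega$.

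Third, I would strip off $\ad(g)$ to produce $\omega$. The subtle point is that Corollary~\ref{cor:surjinneriso} does not apply directly, since $\omega$ need not be surjective. Instead, I would use Lemma~\ref{lem:omegasubgroup} to obtain the surjective restriction $\omega_{\mathrm{res}} \colon \Gamma_\omega^2 \to \Gamma_\omega$, and then recursively construct $b \colon \{0,1\}^* \to \Gamma_\omega \subseteq \Gamma$ satisfying $b(\epsilon) := e$ and $\omega(b(u0),b(u1)) = g^{-1} b(u)$ for every $u \in \{0,1\}^*$. This recursion succeeds precisely because $g \in \Gamma_\omega$ keeps $g^{-1}b(u)$ inside $\Gamma_\omega$, and $\omega_{\mathrm{res}}$ is surjective onto $\Gamma_\omega$. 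This $b$ satisfies condition $(*)$ of Lemma~\ref{lem:omegainneriso} with $h = g$, so that lemma yields $G(\omega) \cong G(\ad(g) \circ \omega) = G(\omega_2)$. Chaining the three isomorphisms, $G(\wt{\omega}) \cong G(\omega_1) \cong G(\omega_2) \cong G(\omega)$, completes the proof.

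The main obstacle is the third step: a direct appeal to Corollary~\ref{cor:surjinneriso} fails because $\omega$ need not be surjective, and it is not immediate that $\Gamma_{\ad(g)\circ \omega} = \Gamma_\omega$, which would be needed to identify the two restrictions given by Lemma~\ref{lem:omegasubgroup}. Bypassing these difficulties by producing $b$ explicitly inside $\Gamma_\omega$ and invoking the stronger Lemma~\ref{lem:omegainneriso} rather than its corollary is the cleanest route.
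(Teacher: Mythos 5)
Your proof is correct and follows exactly the route the paper intends: the paper's entire ``proof'' is the sentence preceding the proposition, which simply cites Corollaries \ref{cor:omegafunctoriso}, \ref{cor:surjinneriso} and Lemmas \ref{lem:omegainneriso}, \ref{lem:omegasubgroup}, \ref{lem:omegaperm}, and your three steps are the natural way to chain them. Your handling of the third step is the right way to fill in the one detail the paper glosses over: since $\omega$ need not be surjective, one constructs $b$ recursively inside $\Gamma_\omega$ using the surjectivity of $\omega_{\mathrm{res}}$ and invokes Lemma \ref{lem:omegainneriso} directly, which is precisely why the hypothesis requires $g \in \Gamma_\omega$.
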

\subsection{Wreath products}\label{subsec:wreathproducts}
In this subsection we will demonstrate that certain choices of group morphisms $\omega: \Gamma^2 \to \Gamma$ yield a description of $G(\omega)$ as an unrestricted twisted permutational wreath product $\Gamma \wr V.$

Fix a group $\Gamma$, an automorphism $\alpha \in \aut(\Gamma),$ and define a group morphism $\omega: \Gamma^2 \to \Gamma$ mapping $(g,h) \mapsto \alpha(g).$
\begin{proposition}\label{prop:wreath} The map $\kappa: \prod_{\QQ_2} \Gamma \to K(\omega)$ sending a map $f : \QQ_2 \to \Gamma$ to the map $\{0,1\}^* \to \Gamma, u \mapsto \alpha^{-|u|} (f(u00\cdots))$, is an isomorphism of groups.

Thus, we may view an element $a \in K(\omega)$ as a map $\QQ_2 \to \Gamma$, writing $a(x) \in \Gamma$ for the image of $x \in \QQ_2$ under $\kappa^{-1}(a)$.
\end{proposition}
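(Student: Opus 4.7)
The plan is to unpack the defining relation of $K(\omega)$ and then build an explicit inverse to $\kappa$. Since $\omega(g,h) = \alpha(g)$, membership in $K(\omega)$ collapses to the single relation $a(u) = \alpha(a(u0))$; equivalently $a(u0) = \alpha^{-1}(a(u))$, while $a(u1)$ is completely unconstrained by $a(u)$. Iterating yields $a(v 0^k) = \alpha^{-k}(a(v))$ for all $v \in \{0,1\}^*$ and $k \geq 0$, which is to say that the ``rescaled'' function $u \mapsto \alpha^{|u|}(a(u))$ is invariant under appending $0$s on the right.

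The next step is to recognize the natural surjection $\phi : \{0,1\}^* \twoheadrightarrow \QQ_2 \cap [0,1)$, $\phi(u) = \sum_{i=1}^{|u|} u_i 2^{-i}$, as the quotient by exactly this equivalence: the fibers of $\phi$ are the classes $\{v 0^k : k \geq 0\}$ for the canonical (shortest) representative $v$, and under the identification of $\phi(u)$ with the eventually-zero sequence $u00\cdots$, the value $f(u00\cdots)$ depends on $u$ only through $\phi(u)$. The invariance just noted is thus precisely the compatibility needed for $u \mapsto \alpha^{|u|}(a(u))$ to descend along $\phi$, yielding a candidate inverse $\kappa^{-1}(a) : \QQ_2 \to \Gamma$ defined by $\phi(u) \mapsto \alpha^{|u|}(a(u))$.

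With the map and candidate inverse in hand, the remaining checks are routine. First, $\kappa(f) \in K(\omega)$ follows from $\alpha(\kappa(f)(u0)) = \alpha \cdot \alpha^{-|u|-1}(f((u0)00\cdots)) = \alpha^{-|u|}(f(u00\cdots)) = \kappa(f)(u)$, using that $(u0)00\cdots = u00\cdots$. Second, $\kappa^{-1} \circ \kappa$ and $\kappa \circ \kappa^{-1}$ are the identity by direct cancellation of $\alpha^{|u|}$ against $\alpha^{-|u|}$. Third, $\kappa$ is a group morphism since multiplication on both sides is pointwise and each $\alpha^{-|u|}$ is an automorphism of $\Gamma$. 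I expect the only real difficulty to be bookkeeping around the non-injective identification $\{0,1\}^* \twoheadrightarrow \QQ_2$ by trailing-zero collapse; once one sees that the $K(\omega)$ relation is precisely the compatibility required to descend through this quotient, the argument is essentially forced.
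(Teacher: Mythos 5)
Your proof is correct and takes essentially the same approach as the paper: the paper also defines the inverse by $x = u00\cdots \mapsto \alpha^{|u|}(a(u))$ and verifies well-definedness via the iterated relation $a(u0^n) = \alpha^{-n}(a(u))$, which is exactly your observation that $u \mapsto \alpha^{|u|}(a(u))$ descends along the trailing-zero quotient $\{0,1\}^* \twoheadrightarrow \QQ_2$. Your write-up is in fact slightly more explicit than the paper's about the checks it dismisses as ``quick to verify.''
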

\begin{proof}
It is quick to verify that the map $\kappa : \prod_{\QQ_2} \Gamma \to \prod_{\{0,1\}^*} \Gamma$ is a group morphism taking values in $K(\omega)$. The inverse of $\kappa$ maps an element $a \in K(\omega)$ to the map $\kappa^{-1}(a): \QQ_2 \to \Gamma$, given by the formula 
\[
\kappa^{-1}(a)(x) = \alpha^{|u|}(a(u)), \ x \in \QQ_2, \ u \in \{0,1\}^* \text{ with } x = u 00 \cdots.
\] If $x \in \QQ_2$ and $u, u'$ are two words in $\{0,1\}^*$ satisfying $x = u00\cdots = u'00\cdots$, then without loss $u' = u0^{n}$ for some $n \geq 0$, so
    \begin{align*}
        \alpha^{|u'|}(a(u')) &= \alpha^{|u| + n}(\alpha^{-n}(a(u))) \\
                             &= \alpha^{|u|}(a(u)).
    \end{align*} Thus, our formula for $\kappa^{-1}(a)$ indeed defines a map $\QQ_2 \to \Gamma$.
\end{proof}
\begin{remark}
The isomorphism $\kappa : \prod_{\QQ_2} \Gamma \xrightarrow{\sim} K(\omega)$ in Proposition \ref{prop:wreath} might not be the most obvious one. Instead, we could take an element $a \in K(\omega)$, and assign it the map $f: \QQ_2 \to \Gamma$ defined by the formula $f(x) = a(u_x),$ where $u_x$ is the longest prefix of $x$ not ending in $0$ ($u_{00\cdots} = \epsilon$ by convention). From $f$ we can recover $a: \{0,1\}^* \to \Gamma$ via the formula 
\[
    a(u) = \alpha^{-n}(f(u00\cdots)), u \in \{0,1\}^*,
\] where $u = u_{x} 0^n$, and $x = u00\cdots$.

We favour the isomorphism of Proposition \ref{prop:wreath} due to the convenient formula for the action $V \curvearrowright \prod_{\QQ_2} \Gamma$, induced by the Jones action $V \curvearrowright K(\omega)$ and the isomorphism $\kappa$ given in Proposition \ref{prop:wreathaction}.
\end{remark}
\begin{proposition}\label{prop:wreathaction} Let $v \in V$ and $a \in K(\omega).$ Viewing the elements of $K(\omega)$ as maps $\QQ_2 \to \Gamma,$ for all $x \in \QQ_2$ we have that
\[ \pi(v)(a)(x) = \alpha^{- \log_2(v'(v^{-1}x))}(a(v^{-1}x)). \]
\end{proposition}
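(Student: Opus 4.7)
The plan is to unpack both sides using the isomorphism $\kappa$ from Proposition \ref{prop:wreath} together with Lemma \ref{lem:contrajonesformula}, and then identify the resulting exponent of $\alpha$ as the derivative of $v$. Fix a tree diagram representative $v = s^{-1}\sigma t$, and let $l_s^i$ be the leaf of $s$ whose dyadic cell contains $x$. Then $x = l_s^i \, w \, 00\cdots$ for some $w \in \{0,1\}^*$, so that $u := l_s^i w$ satisfies $x = u 00 \cdots$.

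By the inverse formula in Proposition \ref{prop:wreath}, the left-hand side equals $\alpha^{|l_s^i| + |w|}\bigl( \pi(v)(a)(l_s^i w) \bigr)$, which by Lemma \ref{lem:contrajonesformula} equals $\alpha^{|l_s^i| + |w|}\bigl( a(l_t^{\sigma^{-1}(i)} w) \bigr)$. For the right-hand side, $v^{-1}x = l_t^{\sigma^{-1}(i)} w 00\cdots$, so applying the same formula gives $a(v^{-1}x) = \alpha^{|l_t^{\sigma^{-1}(i)}| + |w|}\bigl( a(l_t^{\sigma^{-1}(i)} w) \bigr)$. Inverting and substituting, both sides become the same element acted on by $\alpha$ to a fixed power, and the claim reduces to the exponent identity $|l_s^i| - |l_t^{\sigma^{-1}(i)}| = -\log_2 v'(v^{-1}x)$.

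This last identity is purely a statement about the piecewise linear action of $V$: the element $v$ maps the dyadic interval of $l_t^{\sigma^{-1}(i)}$, of length $2^{-|l_t^{\sigma^{-1}(i)}|}$, affinely onto the dyadic interval of $l_s^i$, of length $2^{-|l_s^i|}$, so its derivative at the interior point $v^{-1}x$ is $2^{|l_t^{\sigma^{-1}(i)}| - |l_s^i|}$. I do not anticipate a substantive obstacle; the only thing worth checking is well-definedness. Swapping the representative $u = l_s^i w$ for $u = l_s^i w 0^n$ changes $|w|$ by $n$ but introduces an offsetting $\alpha^{-n}$ via the defining relation of $K(\omega)$, and swapping the tree pair $(s,\sigma,t)$ for a common refinement introduces matching changes in $|l_s^i|$ and $|l_t^{\sigma^{-1}(i)}|$; in the final formula, both $\kappa^{-1}$ and $v'(v^{-1}x)$ are intrinsic, so independence is automatic.
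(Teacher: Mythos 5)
Your proposal is correct and follows essentially the same route as the paper: apply the inverse formula of Proposition \ref{prop:wreath} on both sides, use Lemma \ref{lem:contrajonesformula} to move from the leaf $l_s^i$ to $l_t^{\sigma^{-1}(i)}$, and identify the resulting exponent $|l_t^{\sigma^{-1}(i)}| - |l_s^i|$ with $\log_2 v'(v^{-1}x)$ (the paper cites Definition \ref{def:slope} for this last step, where you give the interval-length argument directly). The added well-definedness check is harmless but not needed, since $\kappa^{-1}$ was already shown to be well defined in Proposition \ref{prop:wreath}.
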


\begin{proof}
Let $\kappa$ be the isomorphism $\prod_{\QQ_2} \Gamma \to K(\omega)$ of Proposition \ref{prop:wreath}, and write $v = s^{-1} \sigma t$, where $t$ and $s$ are trees with $\ell(t) = \ell(s)$, and $\sigma \in S_{\ell(t)}$. Suppose that $f: \QQ_2 \to \Gamma$ is a map, and that $x \in \QQ_2.$ Let $l_s^i$ be the $i^{\text{th}}$ leaf of $s$ which is also a prefix of $x$, so that $x = l_s^i u 00\cdots$ for some $u \in \{0,1\}^*$. Then
\begin{align*}
    \left(\kappa^{-1} \circ \pi(v) \circ \kappa(f)\right)(x) &= \alpha^{|l_s^i u|}(\pi(v) \circ  \kappa(f)(l_s^i u)) \\
    &= \alpha^{|l_s^i u|} \left( \alpha^{-|l_t^{\sigma^{-1}(i) }u|} \left(f(l_t^{\sigma^{-1}(i)} u 00\cdots\right)\right) \\
    &= \alpha^{-N}(f(v^{-1}x)),
\end{align*} where the first line follows from the formula for $\kappa^{-1}$ in the proof of Proposition \ref{prop:wreath}, the second from Lemma \ref{lem:contrajonesformula}, and $N := |l_t^{\sigma^{-1}(i)}| - |l_s^i|$. By Definition \ref{def:slope}, we indeed have that $N = \log_2(v'(v^{-1}x))$, so we are done.
\end{proof}
\begin{corollary}\label{cor:wreathiso}
Consider the action $\rho: V \curvearrowright \prod_{\QQ_2} \Gamma$ given by the formula 
\[ 
\rho(v)(a)(x) = \alpha^{-\log_2(v'(v^{-1}x))}(a(v^{-1}x)), \ a: \QQ_2 \to \Gamma, \ v \in V, \ x \in \QQ_2.
\] Then $G(\omega)$ is isomorphic to the unrestricted twisted permutational wreath product \\ $\prod_{\QQ_2} \Gamma \rtimes_\rho V.$
\end{corollary}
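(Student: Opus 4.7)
The plan is to package Propositions \ref{prop:wreath} and \ref{prop:wreathaction} into the desired isomorphism of semidirect products. By definition, $G(\omega) = K(\omega) \rtimes V$, where $V$ acts on $K(\omega)$ by the Jones action $\pi$. Proposition \ref{prop:wreath} supplies a group isomorphism $\kappa: \prod_{\QQ_2} \Gamma \to K(\omega)$, and the computation in Proposition \ref{prop:wreathaction} is exactly the statement that $\pi(v) = \kappa \circ \rho(v) \circ \kappa^{-1}$ for every $v \in V$. The candidate isomorphism is therefore
\[
\Phi: \prod_{\QQ_2} \Gamma \rtimes_\rho V \to G(\omega), \quad (a,v) \mapsto \kappa(a) v.
\]

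First I would verify that $\rho$ is in fact a group action of $V$ on $\prod_{\QQ_2} \Gamma$ by automorphisms. This is automatic: since $\rho(v) = \kappa^{-1} \pi(v) \kappa$ by Proposition \ref{prop:wreathaction}, and $\pi$ is a group action of $V$ on $K(\omega)$ with $\kappa$ a group isomorphism, $\rho$ inherits both the homomorphism property $\rho(vw) = \rho(v)\rho(w)$ and the fact that each $\rho(v)$ is an automorphism of $\prod_{\QQ_2} \Gamma$. Next I would check that $\Phi$ is a group homomorphism, which reduces to the equivariance $\kappa(\rho(v)(a)) = \pi(v)(\kappa(a))$, and this is the content of Proposition \ref{prop:wreathaction}. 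Bijectivity of $\Phi$ is then immediate from the bijectivity of $\kappa$ together with the identity on the $V$-factor.

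There is no substantive obstacle: the real work has already been done in the two preceding propositions. The only remaining bookkeeping is to confirm that the formula $\rho(v)(a)(x) = \alpha^{-\log_2(v'(v^{-1}x))}(a(v^{-1}x))$ written in the statement of the corollary matches the formula derived in Proposition \ref{prop:wreathaction}; once that is noted, the conclusion $G(\omega) \cong \prod_{\QQ_2} \Gamma \rtimes_\rho V$ follows mechanically.
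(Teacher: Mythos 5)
Your proposal is correct and matches the paper's (implicit) argument exactly: the corollary is stated without a separate proof precisely because Proposition \ref{prop:wreath} provides the isomorphism $\kappa$ and Proposition \ref{prop:wreathaction} is the conjugation identity $\kappa^{-1}\pi(v)\kappa = \rho(v)$, from which the isomorphism of semidirect products is immediate. Your additional observation that $\rho$ is automatically an action because it is conjugate to $\pi$ is a reasonable piece of bookkeeping the paper leaves unsaid.
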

We will now reconcile the notion of support of an element $a \in K(\omega)$ given by $R(\omega)$, and the set of non-zero values of $a \in K(\omega)$, viewed as a map $\QQ_2 \to \Gamma$. Recall from Definition \ref{def:gensupp} that for each $ a \in K(\omega),$ we have that \[\supp(a) = \supp_{R(\omega)}(a) := \{x = x_0 x_1 \cdots \in \cantor \ | \ R(\omega)_{x_0 \cdots x_m}(a) \neq e_\Gamma \text{ for all } m \geq 0\}.\]
\begin{lemma}\label{lem:densesupp}
Let $a \in K(\omega).$ Then the set \[ \{x \in \QQ_2 \ | \ a(x) \neq e_\Gamma\}\] is dense in $\supp(a).$
\end{lemma}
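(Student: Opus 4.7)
The plan is to verify the density by a direct construction. Since the cylinders $[v] := \{x \in \cantor \mid v \text{ is a prefix of } x\}$ indexed by $v \in \{0,1\}^*$ form a basis of the topology on $\cantor$, to prove that $S := \{x \in \QQ_2 \mid a(x) \neq e_\Gamma\}$ is dense in $\supp(a)$ it is enough to show that for every $y \in \supp(a)$ and every prefix $v$ of $y$, the cylinder $[v]$ meets $S$. So first I would fix such $y$ and $v$ and then exhibit an explicit point of $S \cap [v]$.

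The candidate is the obvious one: $x := v 0 0 0 \cdots \in \QQ_2$, which lies in $[v]$ by construction. The formula for $\kappa^{-1}$ recorded in the proof of Proposition \ref{prop:wreath} gives $a(x) = \alpha^{|v|}(a(v))$, where on the right $a(v)$ denotes the value of $a \in K(\omega)$ at the node $v$ when $a$ is viewed as a map $\{0,1\}^* \to \Gamma$. Since $y \in \supp(a)$ and $v$ is a prefix of $y$, the defining condition in Definition \ref{def:gensupp} forces $a(v) \neq e_\Gamma$, and because $\alpha \in \aut(\Gamma)$ this yields $a(x) \neq e_\Gamma$ as well. Hence $x \in S \cap [v]$, finishing the argument.

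I do not expect a real obstacle here; the proof is essentially a direct unpacking of the definitions. The only point of care is the notational overloading introduced after Proposition \ref{prop:wreath}, where $a(\cdot)$ simultaneously denotes both the map $\{0,1\}^* \to \Gamma$ and its image under $\kappa^{-1}$ as a map $\QQ_2 \to \Gamma$; the factor $\alpha^{|v|}$ linking the values at the node $v$ and at the dyadic point $v 0 0 0 \cdots$ is the entire mechanism behind the argument.
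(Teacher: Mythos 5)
There is a genuine gap in the step where you pass from $y \in \supp(a)$ to $a(v) \neq e_\Gamma$ for a prefix $v$ of $y$. The defining condition of $\supp(a)$ is that $R_{y_0 \cdots y_m}(a) \neq e_K$ for all $m$, i.e.\ that the element $R_v(a)$ of $K(\omega)$ --- the map $w \mapsto a(vw)$ --- is nontrivial \emph{as an element of $K(\omega)$}. This only guarantees $a(vw) \neq e_\Gamma$ for \emph{some} $w \in \{0,1\}^*$, not for $w = \epsilon$, so it does not force $a(v) \neq e_\Gamma$, and your candidate point $v00\cdots$ can fail to lie in $S$. Concretely, take $\Gamma = \ZZ$, $\alpha = \mathrm{id}$, and define $a(u) := 1$ if $u$ contains the digit $1$ and $a(0^k) := 0$; then $a \in K(\omega)$ since $a(u) = a(u0)$ for all $u$, the point $y = 000\cdots$ lies in $\supp(a)$ because each $R_{0^m}(a)$ is nontrivial (it takes the value $1$ at the word $1$), and yet $a(v000\cdots) = a(v) = 0 = e_\Gamma$ for every prefix $v = 0^k$ of $y$. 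The repair is exactly the paper's argument: from $R_{y_0 \cdots y_m}(a) \neq e_K$ extract a word $u_m$ with $a(y_0 \cdots y_m u_m) \neq e_\Gamma$, and use $x_m := y_0 \cdots y_m u_m 00\cdots$, which lies in the cylinder $[y_0 \cdots y_m]$ and satisfies $a(x_m) = \alpha^{|y_0 \cdots y_m u_m|}\bigl(a(y_0 \cdots y_m u_m)\bigr) \neq e_\Gamma$, hence $x_m \in S$.

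A secondary omission: your argument only establishes $\supp(a) \subseteq \overline{S}$. For $S$ to be dense \emph{in} $\supp(a)$ one also needs $S \subseteq \supp(a)$, which the paper proves as the first half of its argument: if $x = u00\cdots \in S$, then for every sufficiently long prefix $u$ of $x$ one has $R_u(a)(\epsilon) = a(u) = \alpha^{-|u|}(a(x)) \neq e_\Gamma$, so $R_u(a) \neq e_K$, and nontriviality for shorter prefixes follows from $R_{uw} = R_w \circ R_u$. This inclusion is, in fact, where the computation you intended to use actually belongs.
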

\begin{proof}
    Let $S := \{x \in \QQ_2 \ | \ a(x) \neq e_\Gamma\}.$ Suppose that $x \in S,$ and let $u \in \{0,1\}^*$ be a prefix of $x$ such that $x = u00\cdots.$ Then
    \begin{align*}
         R_u(a)(\epsilon) = a(u) = \alpha^{-|u|}(a(u00\cdots))
    \end{align*}
    which yields that $R_u(a) \neq e_K.$ Hence, $R_{x_0 x_1 \cdots x_m}(a) \neq e_\Gamma$ for sufficiently large $m$, and so $x \in \supp(a).$

    Conversely, suppose that $x \in \supp(a).$ Then for all $m \geq 0$ we have that $R_{x_0 \cdots x_m}(a) \neq e_K,$ and thus there is a word $u_m \in \{0,1\}^*$ such that \[R_{x_0 \cdots x_m}(a)(u_m) = a(x_0 \cdots x_m u_m) \neq e_\Gamma. \tag{$*$} \] Letting $y_m := x_0 \cdots x_m u_m 0 0 \cdots$ for all $m \geq 0,$ the sequence $(y_m)_{m \geq 0}$ is contained in $S$ by $(*)$. Finally, for each $m \geq 0,$ the first $m$ digits of $y_m$ and $x$ agree, so $y_m \to x$ in $\cantor$.
\end{proof}
\subsection{Classification of wreath products}\label{subsec:thinclassification}
In this subsection, we give a necessary and sufficient condition for two wreath products considered in the previous subsection to be isomorphic.
\begin{definition}
    If $\Gamma$ is a group and $\alpha \in \text{End}(\Gamma),$ denote by $G(\alpha) = K(\alpha) \rtimes V$ the group $G(\omega)$ coming from the group morphism $\omega: \Gamma^2 \to \Gamma, (g,h) \mapsto \alpha(g)$. Thus, if $\alpha \in \aut(\Gamma)$, we identify $G(\alpha)$ with the wreath product of Corollary \ref{cor:wreathiso}. We will stick to our convention of writing $G, \wt{G}$ instead of $G(\alpha), G(\wt{\alpha})$, given endomorphisms $\alpha$ and $ \wt{\alpha}$ of groups $\Gamma$ and $\wt{\Gamma}$.

    If $\alpha$ is an endomorphism of a group $\Gamma$, we denote by $\Gamma^\alpha$ the fixed points of $\alpha$. Moreover, if $g \in \Gamma$ and $x \in \QQ_2$, denote by $g_x$ the map $\QQ_2 \to \Gamma$ supported at $x$ and equal to $g$. I.e., $g_x(x) = g$ and $g_x(y) = e_\Gamma$ for all $y \neq x$.
\end{definition}
Throughout the rest of the subsection, $\Gamma$ and $\wt{\Gamma}$ will be groups, and $\alpha, \wt{\alpha}$ will be \textit{automorphisms} of $\Gamma$ and $\wt{\Gamma}$, respectively.
\begin{lemma}\label{lem:wreathcentre}
The centre $Z(G(\alpha))$ consists of the constant maps $\QQ_2 \to \Gamma$ taking values in $Z\Gamma \cap \Gamma^\alpha.$
\end{lemma}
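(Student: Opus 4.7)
The plan is to prove the two inclusions separately, working within the wreath-product description $G(\alpha) \cong \prod_{\QQ_2} \Gamma \rtimes_\rho V$ from Corollary \ref{cor:wreathiso}. For the easy direction, fix $g \in Z\Gamma \cap \Gamma^\alpha$ and let $c_g : \QQ_2 \to \Gamma$ denote the constant map with value $g$. Since multiplication in $K(\alpha)$ is pointwise, $c_g$ commutes with every element of $K(\alpha)$ because $g$ lies in $Z\Gamma$. For the action of $V$, the formula of Proposition \ref{prop:wreathaction} collapses to
\[
\rho(v)(c_g)(x) = \alpha^{-\log_2 v'(v^{-1}x)}(g) = g,
\]
since every integer power of $\alpha$ fixes $g$. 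Hence $c_g \in Z(G(\alpha))$.

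For the reverse inclusion, suppose $av \in Z(G(\alpha))$ with $a \in K(\alpha)$ and $v \in V$. Commuting $av$ with an arbitrary $w \in V$ inside the semidirect product yields $(a, vw) = (\rho(w)(a), wv)$, so $vw = wv$ and $\rho(w)(a) = a$; consequently $v \in Z(V)$ and $a$ is $V$-invariant. Because Thompson's group $V$ is non-abelian simple, $Z(V) = \{e\}$, forcing $v = e$. Commuting $a$ with arbitrary $b \in K(\alpha)$ then reduces to $ab = ba$ in $K(\alpha)$, placing $a$ in $Z(K(\alpha)) = \prod_{\QQ_2} Z\Gamma$; equivalently $a(x) \in Z\Gamma$ for every $x \in \QQ_2$.

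It remains to upgrade the $V$-invariant $a$ to a constant map with value in $\Gamma^\alpha$. Rewriting $\rho(v)(a) = a$ as
\[
a(v(y)) = \alpha^{-\log_2 v'(y)}(a(y)),
\]
I would argue as follows: for any $x, y \in \QQ_2$, choose equal-length words $u, u' \in \{0,1\}^*$ with $x = u 00\cdots$ and $y = u' 00\cdots$, and build $v \in V$ from a bijection of dyadic partitions that sends the cone based at $u'$ onto the cone based at $u$ by the natural affine identification (extending arbitrarily on the complement). Then $v(y) = x$ with $v'(y) = 1$, so $a(x) = a(y)$; thus $a$ is a constant map with value some $g \in Z\Gamma$. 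Applying the identity once more with a $v \in V$ possessing a point of non-unit slope (e.g.\ the standard generator $x_0$ of $F \leq V$) then forces $\alpha^n(g) = g$ for a nonzero integer $n$, whence $g \in \Gamma^\alpha$. The only mild technical ingredients are the triviality of $Z(V)$, which follows from the simplicity of $V$, and the richness of the $V$-action on $\QQ_2$ needed to realise both slope-one bijections and slope-nontrivial local actions; neither should present serious difficulty.
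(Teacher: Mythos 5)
Your argument is correct in substance but follows a genuinely different route from the paper. The paper's proof leans on the general centre computation for the groups $K \rtimes V$ arising from Jones' technology (Lemma \ref{lem:gencentre}), which immediately reduces the problem to finding the elements $a \in ZK$ with $R_u(a) = a$ for all $u$; it then works in the $\{0,1\}^*$-picture, where $a(u) = R_u(a)(\epsilon) = a(\epsilon)$ shows $a$ is constant on the tree and the defining relation $a(u) = \alpha(a(u0))$ gives $a(\epsilon) \in \Gamma^\alpha$, before translating back to the $\QQ_2$-picture via Proposition \ref{prop:wreath}. You instead re-derive the centre decomposition from scratch inside the semidirect product (triviality of $Z(V)$ via simplicity, pointwise centrality in $K$), and then extract constancy and $\alpha$-invariance directly from the formula of Proposition \ref{prop:wreathaction}, using transitivity of $V$ on $\QQ_2$ by slope-one elements together with the existence of points of non-unit slope. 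Your version is more self-contained --- it does not invoke Lemma \ref{lem:gencentre} --- at the cost of constructing explicit elements of $V$; the constructions you sketch (padding to equal-length prefixes, the full binary tree of that depth, the leaf-swapping permutation) all go through.

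One slip at the very end: from $\alpha^n(g) = g$ with $n$ a nonzero integer you cannot conclude $g \in \Gamma^\alpha$ (take $\alpha$ of order $2$ and $g$ not fixed by $\alpha$). You need $n = \pm 1$, which you do obtain by evaluating the invariance identity at a point $y \in \QQ_2$ where your chosen $v$ has slope exactly $\tfrac{1}{2}$ or $2$ --- such points exist for $x_0$ --- and since $\alpha$ is an automorphism in this subsection, $\alpha^{-1}(g) = g$ is equivalent to $\alpha(g) = g$. With that adjustment the proof is complete.
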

\begin{proof}
Suppose that $a \in Z(G(\alpha)).$ By Lemma \ref{lem:gencentre}, we have that $R_u(a) = a$ for all $u \in \{0,1\}^*.$ But then $a(u) = R_u(a)(\epsilon) = a(\epsilon)$ for all $u \in \{0,1\}^*,$ and so $a$ is constant when viewed as a map $\{0,1\}^* \to \Gamma.$ Moreover, $a : \{0,1\}^* \to \Gamma$ is valued in $ Z\Gamma \cap \Gamma^\alpha$ since $a \in ZK,$ and $\alpha(a(u0)) = a(u)$ for all $u \in \{0,1\}^*,$ and so the formula \[ a(x) = \alpha^{|u|}(a(u)) \ x \in \QQ_2, \ u \in \{0,1\}^*, \ x = u00\cdots  \] from the proof of Proposition \ref{prop:wreath} allows us to conclude that $a : \QQ_2 \to \Gamma$ is constant and valued in $Z \Gamma \cap \Gamma^\alpha.$

Conversely, if $a: \QQ_2 \to \Gamma$ is constant and valued in $Z \Gamma \cap \Gamma^\alpha,$ then $a \in ZK$ and $\pi(v)(a) = a$ for all $v \in V,$ implying that $a \in ZG.$
\end{proof}
    \begin{proposition}\label{prop:finitesuppiso}
    Suppose that $\Gamma$ is not the trivial group, and let $\theta: G \to \wt{G}$ be an isomorphism. Using the notation of Theorem \ref{thm:isodecomp}, write \[ \theta(av) = \kappa^0(a) \cdot \zeta(a) \cdot c_v \cdot \ad_{\varphi}(v), \ av \in G.\] Then $\varphi \in \stabn$ (Definition \ref{def:nvcstabn}), and there is a unique family of isomorphisms $(\kappa_x: \Gamma \to \wt{\Gamma})_{x \in \QQ_2}$ such that \[ \kappa(g_x) = \kappa_x(g)_{\varphi(x)} \mod Z \wt{G} \text{ for all } g \in \Gamma, \ x \in \QQ_2.\]
    
    Moreover, the map $\kappa^1: K \to \wt{K}$ defined by the formula \[\kappa^1(a)(\varphi(x)) := \kappa_x(a(x))\] induces an isomorphism $\theta^1: G \to \wt{G}, av \mapsto \kappa^1(a) \cdot c_v \cdot \ad_{\varphi}(v)$.
    \end{proposition}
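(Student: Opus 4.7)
My approach is to use the point-supported elements $g_x \in K$ to simultaneously extract the $\QQ_2$-preservation of $\varphi$ and the pointwise data $\kappa_x$.

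The first step is to show $\varphi \in \stabn$. Fix nontrivial $g \in \Gamma$ and $x \in \QQ_2$. The element $g_x \in K$ satisfies $\{y \in \QQ_2 : g_x(y) \neq e_\Gamma\} = \{x\}$, so by Lemma \ref{lem:densesupp} its support inside $\cantor$ is $\{x\}$. The decomposition in Theorem \ref{thm:isodecomp} tracks supports via $\varphi$, giving $\supp(\kappa(g_x)) = \{\varphi(x)\}$. Applying Lemma \ref{lem:densesupp} again to $\kappa(g_x) \in \wt{K}$, the set $\{y \in \QQ_2 : \kappa(g_x)(y) \neq e_{\wt{\Gamma}}\}$ is dense in $\{\varphi(x)\}$; since $\theta$ is injective and $g_x$ is nontrivial modulo $ZG$, this set is nonempty, hence equals $\{\varphi(x)\}$, forcing $\varphi(x) \in \QQ_2$. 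Running the same argument with $\theta^{-1}$ gives $\varphi \in \stabn$.

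With $\varphi$ now restricting to a bijection $\QQ_2 \to \QQ_2$, I would extract $\kappa_x$ as follows. Since $\kappa(g_x)$ vanishes on $\QQ_2 \setminus \{\varphi(x)\}$ and $Z\wt{G}$ consists of constant maps by Lemma \ref{lem:wreathcentre}, there is a unique $\kappa_x(g) \in \wt{\Gamma}$ with $\kappa(g_x) \equiv (\kappa_x(g))_{\varphi(x)} \mod Z\wt{G}$. The map $\kappa_x : \Gamma \to \wt{\Gamma}$ inherits the homomorphism property from $\kappa$, and running the same construction from $\theta^{-1}$ yields an inverse, so each $\kappa_x$ is an isomorphism. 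Uniqueness of the family $(\kappa_x)_{x \in \QQ_2}$ follows from uniqueness in each component.

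The final step is to verify the assertion about $\theta^1$. The formula $\kappa^1(a)(\varphi(x)) := \kappa_x(a(x))$ specifies a map $\QQ_2 \to \wt{\Gamma}$, which Proposition \ref{prop:wreath} identifies with an element of $\wt{K}$, and $\kappa^1 : K \to \wt{K}$ is a group homomorphism because each $\kappa_x$ is. The main obstacle is checking that $\theta^1$ is itself a group homomorphism, which reduces to showing $\kappa^1$ is equivariant for the twisted $V$-action defined by $c_v$ and $\ad_\varphi$. Since $\theta$ is a homomorphism, $\kappa$ is already equivariant modulo $Z\wt{G}$; testing on the point-supported generators $g_x$ and applying Proposition \ref{prop:wreathaction} together with the compatibility relations built into the decomposition of Theorem \ref{thm:isodecomp} should show that the factors $\kappa^0(a) \cdot \zeta(a)$ absorb exactly the central discrepancy between $\kappa$ and $\kappa^1$. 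Equivariance on finitely supported elements then extends to all of $K$ via Proposition \ref{prop:wreath}, and bijectivity of $\theta^1$ is immediate from bijectivity of $\theta$, $\varphi$, and each $\kappa_x$.
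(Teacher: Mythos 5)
Your treatment of the first two claims --- that $\varphi \in \stabn$ and that the family $(\kappa_x)_{x\in\QQ_2}$ exists, is unique, and consists of isomorphisms --- is essentially the paper's argument: point-supported elements $g_x$, Lemma \ref{lem:densesupp} applied on both sides, Lemma \ref{lem:wreathcentre} for uniqueness, and the symmetric argument with $\theta^{-1}$ for surjectivity of each $\kappa_x$. One small imprecision there: the support argument should be run on $\kappa^0(g_x)$ rather than on $\kappa(g_x) = \kappa^0(g_x)\cdot\zeta(g_x)$, since $\zeta(g_x)$ may be a nontrivial constant map and then the set $\{y \in \QQ_2 : \kappa(g_x)(y)\neq e_{\wt\Gamma}\}$ is not contained in $\{\varphi(x)\}$. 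This is easily repaired by separating the support computation (done with $\kappa^0$) from the mod-$Z\wt{G}$ statement, as the paper does.

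The genuine gap is in the last step. You propose to establish the twisted equivariance identity
\[
\kappa^1\circ\pi(v) = \ad(c_v)\circ\wt{\pi}(\ad_\varphi(v))\circ\kappa^1
\]
on the finitely supported subgroup $\oplus_{\QQ_2}\Gamma$ --- where it is in fact automatic, because $\kappa^1$ and $\kappa^0$ agree there and $\kappa^0$ already satisfies this identity by Theorem \ref{thm:isodecomp} --- and then assert that it ``extends to all of $K$ via Proposition \ref{prop:wreath}.'' Proposition \ref{prop:wreath} is only the identification $K(\omega)\cong\prod_{\QQ_2}\Gamma$; it provides no extension principle. Two homomorphisms out of $K$ that agree on the proper subgroup $\oplus_{\QQ_2}\Gamma$ need not agree on all of $\prod_{\QQ_2}\Gamma$, and there is no generation or continuity argument available here. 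The missing idea, which is the one step of the paper's proof that requires an actual computation, is a pointwise reduction: for fixed $v$ and $x$, each side of the identity evaluated at the single point $\varphi(vx)$ depends only on the single coordinate $a(x)$ --- explicitly, $\kappa^1(\pi(v)(a))(\varphi(vx)) = \kappa_{vx}\bigl(\alpha^{-\log_2(v'(x))}(a(x))\bigr) = \kappa^1(\pi(v)(a(x)_x))(\varphi(vx))$ by Proposition \ref{prop:wreathaction}, and similarly for $\ad(c_v)\circ\wt{\pi}(\ad_\varphi(v))\circ\kappa^1(a)$. This reduces the general case to the finitely supported case one coordinate at a time. Without that reduction your argument does not close.
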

    \begin{proof}
        We start by showing that $\varphi \in \stabn.$ Let $x \in \QQ_2$ and $g$ be a non-trivial element of $\Gamma.$ By Lemma \ref{lem:densesupp}, we have that $\supp(g_x) = \{x\},$ so Theorem \ref{thm:isodecomp} implies that $\supp(\kappa^0(g_x)) = \{\varphi(x)\}$. Again by Lemma \ref{lem:densesupp}, the set \[\{ y \in \QQ_2 \ | \kappa^0(g_x)(y) \neq e_\Gamma\}\] is non-empty and contained in $\{\varphi(x)\},$ and so $\varphi(x) \in \QQ_2.$ A similar argument applied to $\theta^{-1}$ shows that $\varphi^{-1}(x) \in \QQ_2$.
        
        Thus, we may write \[ \kappa(g_x) = \kappa_x(g)_{\varphi(x)} \cdot \zeta(g_x) \] for some non-trivial element $\kappa_x(g) \in \wt{\Gamma}$. However, Lemma \ref{lem:wreathcentre} tells us that $\wt{h}_{\varphi(x)} \notin Z\wt{G}$ for all non-trivial $\wt{h} \in \wt{\Gamma}$, so $\kappa_x(g)$ is the unique element of $\wt{\Gamma}$ satisfying 
            \[
            \kappa(g_x) = \kappa_x(g)_{\varphi(x)} \mod Z\wt{G}.
            \]
         So far we have shown that $\varphi \in \stabn,$ and that for each non-trivial $g \in \Gamma$ and $x \in \QQ_2,$ there is a unique $\kappa_x(g) \in \wt{\Gamma}$ satisfying $\kappa(g_x) = \kappa_x(g)_{\varphi(x)} \mod Z \wt{G}$. A similar argument shows that if $x \in \QQ_2$, then the only element $\wt{h} \in \wt{\Gamma}$ satisfying $\kappa(e_x) = \wt{h}_{\varphi(x)} \mod Z \wt{G}$ is $\wt{h} = \wt{e}$, the identity element of $\wt{\Gamma}$. We therefore define $\kappa_x(e) := \wt{e}$. It only remains to show that each map $\kappa_x : \Gamma \to \wt{\Gamma}, \ x \in \QQ_2$ is an isomorphism.
        
        Fix $x \in \QQ_2.$ The map $\kappa_x: \Gamma \to \wt{\Gamma}$ is multiplicative since $\kappa$ is a group morphism. We already saw that if $g \in \Gamma$ is not the identity $e$, then $\kappa_x(g) \neq \wt{e}$, so $\kappa_x$ is injective. Next, let $\wt{g} \in \wt{\Gamma}.$ If $\wt{g} = \wt{e},$ then $\wt{g} = \kappa_x(e).$ If $\wt{g} \neq \wt{e},$ then applying the same considerations to $\theta^{-1}$ as we did to $\theta$, we have that $\theta^{-1}(\wt{g}_{\varphi(x)}) = g_x \cdot a$ for some non-trivial $g \in \Gamma$ and $a \in ZG.$ But then 
        \begin{align*}
            \kappa(g_x \cdot a) &= \kappa_x(g)_{\varphi(x)} \cdot \wt{a} \\ &= \wt{g}_{\varphi(x)}
        \end{align*} for some $\wt{a} \in Z \wt{G}.$ Hence, $\kappa_x(g)_{\varphi(x)} = \wt{g}_{\varphi(x)}$, implying that $\kappa_x(g) = \wt{g}$, so $\kappa_x : \Gamma \to \wt{\Gamma}$ is an isomorphism. 
        
        Finally, consider the maps $\kappa^1: K \to \wt{K}, \theta^1: G \to \wt{G}$ defined by the formulae \[ \kappa^1(a)(\varphi(x)) := \kappa_x(a(x)) \] and \[ \theta^1: G \to \wt{G}, av \mapsto \kappa^1(a) c_v \ad_{\varphi(v)}  \] for all $a \in K, x \in \QQ_2$, and $v \in V$.
        
        Since $\kappa^0$ and $\kappa^1$ agree on $\oplus_{\QQ_2} \Gamma,$ we have that \[ \kappa^1 \circ \pi(v)(a) = \ad(c_v)\circ \wt{\pi}(\ad_{\varphi}(v))\circ \kappa^1(a) \tag{$*$} \] for all $a \in \oplus_{\QQ_2} \Gamma.$ Suppose now that $a \in K, \ v \in V$ and $x \in \QQ_2.$ Then 
        \begin{align*}
            \kappa^1 \circ \pi(v)(a)(\varphi(vx)) &= \kappa_{vx} \left(\pi(v)(a)(vx)\right) \\ 
            &= \kappa_{vx}(\alpha^{-\log_2(v'(x))}(a(x))) \\ 
            &= \kappa^1 \circ \pi(v)(a(x)_x)(\varphi(vx)).
        \end{align*} A similar computation shows that 
        \begin{align*}
            \ad(c_v) \circ \wt{\pi}(\ad_{\varphi}(v)) \circ \kappa^1(a)(\varphi(vx)) &= \ad(c_v) \circ \wt{\pi}(\ad_{\varphi}(v)) \circ \kappa^1(a(x)_x)(\varphi(vx)),
        \end{align*} and so condition $(*)$ holds for all $ a \in K.$ This allows us to conclude that $\theta^1$ is a group morphism, and the fact that $\theta^1$ is an isomorphism follows from the fact that $\kappa^1: K \to \wt{K}$ and $\theta: G \to \wt{G}$ are isomorphisms.
    \end{proof}
\begin{remark}\label{rmk:bitflip}
The assumption that at least one of the groups $\Gamma, \wt{\Gamma}$ is non-trivial in Proposition \ref{prop:finitesuppiso} is essential. If either $\Gamma$ or $\wt{\Gamma}$ were trivial, we would have $G = \wt{G} = V$, so $\varphi$ could be any homeomorphism of the Cantor space normalising $V$. The bit-flipping homeomorphism, which maps an element $x = x_0 x_1 \cdots \in \cantor$ to the element $(\neg \ x_0) (\neg \ x_1) \cdots \in \cantor$, where $\neg \ 0 := 1$ and $\neg \ 1 := 0$, normalises $V$ but doesn't stabilise $\QQ_2$. Conjugating an element of $V$ by this homeomorphism reflects its diagram about the vertical axis.
\end{remark}
\begin{remark}
A corollary of Proposition \ref{prop:finitesuppiso} is that if $G(\alpha) \cong G(\wt{\alpha})$ with $\alpha \in \aut(\Gamma)$ and $\wt{\alpha} \in \aut(\wt{\Gamma})$, then $\Gamma \cong \wt{\Gamma}$. Theorem \ref{thm:thinclassification} is stronger than this, and relates the automorphisms $\alpha$ and $\wt{\alpha}$.
\end{remark}
\begin{theorem}\label{thm:thinclassification}
The groups $G, \wt{G}$ are isomorphic if and only if $\wt{\alpha} = \ad(h) \circ \beta \alpha \beta^{-1}$ for some isomorphism $\beta: \Gamma \to \wt{\Gamma}$ and $h \in \wt{\Gamma}.$
\end{theorem}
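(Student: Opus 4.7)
The plan is to handle the two implications separately. The ``if'' direction is immediate from Proposition \ref{prop:sufficientomegaiso}: writing $\omega(g_1,g_2) = \alpha(g_1)$ and $\wt{\omega}(\wt{g}_1,\wt{g}_2) = \wt{\alpha}(\wt{g}_1)$ and noting that $\Gamma_\omega = \Gamma$ because $\alpha \in \aut(\Gamma)$, the identity $\wt{\alpha} = \ad(h) \circ \beta \alpha \beta^{-1}$ rearranges to $\wt{\omega} = \beta \circ \ad(\beta^{-1}(h)) \circ \omega \circ (\beta \times \beta)^{-1}$, which is the hypothesis of Proposition \ref{prop:sufficientomegaiso} with $\sigma = \mathrm{id}$ and $g = \beta^{-1}(h)$.

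For the ``only if'' direction I would begin from an arbitrary isomorphism $\theta : G \to \wt{G}$ and apply Proposition \ref{prop:finitesuppiso}, producing $\varphi \in \stabn$, a family of isomorphisms $(\kappa_x : \Gamma \to \wt{\Gamma})_{x \in \QQ_2}$, and a simplified isomorphism $\theta^1(av) = \kappa^1(a) \cdot c_v \cdot \ad_\varphi(v)$. Since $\theta^1$ is a group homomorphism, it intertwines the actions of $V$: $\kappa^1 \circ \pi(v) = \ad(c_v) \circ \wt{\pi}(\ad_\varphi(v)) \circ \kappa^1$ on all of $K$. Feeding in the formula from Proposition \ref{prop:wreathaction} and evaluating at $\varphi(vx)$ for $v \in V$, $x \in \QQ_2$, I would extract a pointwise identity of the form
\[
\kappa_{vx} \circ \alpha^{-\log_2 v'(x)} = \ad(h_v(x)) \circ \wt{\alpha}^{-\log_2 (\ad_\varphi(v))'(\varphi(x))} \circ \kappa_x,
\]
where $h_v(x) \in \wt{\Gamma}$ records the value of $c_v$ at $\varphi(vx)$.

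Next I would fix a reference point $x_0 \in \QQ_2$, set $\beta := \kappa_{x_0}$, and specialise twice. Since $V$ acts transitively on $\QQ_2$, for each $x$ one can choose $v \in V$ with $vx_0 = x$ and $\log_2 v'(x_0) = 0$; the intertwining identity then forces $\kappa_x$ to equal $\beta$ up to inner automorphism and an $\wt{\alpha}$-power, showing that the whole family $(\kappa_x)$ is controlled by $\beta$. Then, choosing $v_0 \in V$ that fixes $x_0$ with $\log_2 v_0'(x_0) = 1$ — for instance, the element obtained by contracting one of the two children of a sufficiently deep prefix of $x_0$ — the identity at $(v_0, x_0)$ reads $\beta \circ \alpha^{-1} = \ad(h') \circ \wt{\alpha}^{-1} \circ \beta$, which rearranges to $\wt{\alpha} = \ad(\wt{\alpha}(h')) \circ \beta \alpha \beta^{-1}$, giving the required form with $h := \wt{\alpha}(h')$.

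The main obstacle will be the matching of slopes on the two sides of the intertwining identity, i.e.\ showing that $\log_2 (\ad_\varphi(v))'(\varphi(x)) = \log_2 v'(x)$ for all $v \in V$ and $x \in \QQ_2$. This should reduce, via Definition \ref{def:nvcstabn} and the fact that a self-homeomorphism of $\cantor$ normalising $V$ and stabilising $\QQ_2$ must act locally as a dyadic-affine map at each $\QQ_2$-point, to a cocycle computation on $V$; equivalently, one has to check that the slope cocycle of $V$ is preserved under conjugation by elements of $\stabn$. Once this slope-preservation is in hand, bookkeeping with the $h_v(x)$'s is routine, and the theorem follows.
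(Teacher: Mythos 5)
Your proposal is correct and follows essentially the same route as the paper: the ``if'' direction via the sufficient isomorphism criteria (Corollaries \ref{cor:omegafunctoriso} and \ref{cor:surjinneriso}, equivalently Proposition \ref{prop:sufficientomegaiso}), and the converse by pushing the conjugates $v \cdot g_x \cdot v^{-1}$ through $\theta^1$ from Proposition \ref{prop:finitesuppiso} and specialising to a $v$ fixing a base point in $\QQ_2$ with slope a non-trivial power of $2$. The slope-matching you flag as the main obstacle is exactly the paper's Proposition \ref{prop:homeoslope}, which gives $(\ad_{\varphi}(v))'(\varphi(x)) = v'(x)$ directly, so no further cocycle computation is needed.
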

\begin{proof}
The case in which either $\Gamma$ or $\wt{\Gamma}$ are the trivial group is clear, so suppose that $\Gamma$ is non-trivial. 

If $\wt{\alpha} = \ad(h) \circ \beta \alpha \beta^{-1}$ for some $h \in \wt{\Gamma}$ and $\beta: \Gamma \to \wt{\Gamma}$ an isomorphism, then $G(\alpha) \cong G(\wt{\alpha})$ via Corollaries \ref{cor:omegafunctoriso} and \ref{cor:surjinneriso}.

For the converse, suppose that $\theta: G \to \wt{G}$ is an isomorphism, and write \[\theta(av) = \kappa^0(a) \cdot \zeta(a) \cdot c_v \cdot \ad_{\varphi}(v), \ av \in G,\] using the notation of Theorem \ref{thm:isodecomp}. Let $(\kappa_x : \Gamma \to \wt{\Gamma})$ be the family of isomorphisms from Proposition \ref{prop:finitesuppiso}, and fix $g \in \Gamma, \ v \in V,$ and $x \in \QQ_2$. Then
\begin{align*}
    \theta^1(v \cdot g_x \cdot v^{-1}) &= \theta^1 \left(\left[\alpha^{-\log_2(v'(x))}\left( g\right)\right]_{vx} \right) \\ &= \left[\kappa_{vx} \circ \alpha^{-\log_2(v'(x))}(g)\right]_{\varphi(vx)}.
\end{align*} On the other hand,
\begin{align*}
   \theta^1(v \cdot g_x \cdot v^{-1}) &= \ad(c_v) \circ \wt{\pi}(\ad_{\varphi}(v)) \left( \kappa_x(g)_{\varphi(x)} \right)  \\ &= \ad(c_v) \left(\left[ \wt{\alpha}^{-\log_2((\varphi v \varphi^{-1})'(\varphi(x))} \circ \kappa_x(g) \right]_{\varphi(vx)} \right).
\end{align*} Evaluating the resulting equation
\begin{align*}
    \left[\kappa_{vx} \circ \alpha^{-\log_2(v'(x))}(g)\right]_{\varphi(vx)} = \ad(c_v) \left(\left[ \wt{\alpha}^{-\log_2((\varphi v \varphi^{-1})'(\varphi(x))} \circ \kappa_x(g) \right]_{\varphi(vx)} \right)
\end{align*} after fixing $x = 00\cdots \in \QQ_2$ and choosing $v \in V$ satisfying $v(x) = x$ and $v'(x) = \frac 1 2$ yields \[ \kappa_0 \circ \alpha (g) = \ad(c_v)(\varphi(x)) \circ \wt{\alpha} \circ \kappa_0(g).\] Here we have used Proposition \ref{prop:homeoslope} to establish that $(\ad_{\varphi}(v))'(\varphi(x)) = v'(x) = \frac 1 2$. We have arrived at the desired equality \[ \wt{\alpha} = \ad(\wt{h}) \circ \beta \alpha \beta^{-1}, \] where $\wt{h} := c_v(\varphi(x))^{-1}$ and $\beta := \kappa_0$.
\end{proof}
\begin{lemma}\label{lem:etalem}
Using the notation of Theorem \ref{thm:isodecomp}, suppose that
    \[
    \theta: G(\alpha) \to G(\wt{\alpha}), av \mapsto \kappa^0(a) \cdot \zeta(a) \cdot c_v \cdot \ad_{\varphi}(v)
    \]
is an isomorphism. Then $a(x) \in Z \Gamma$ imples that $\kappa^0(a)(\varphi(x)) \in Z \wt{\Gamma}$ for all $a \in K$ and $x \in \QQ_2$.

Recalling the family of isomorphisms $(\kappa_x: \Gamma \to \wt{\Gamma})_{x \in \QQ_2}$ and the isomorphism $\kappa^1: K \to \wt{K}$ of Proposition \ref{prop:finitesuppiso}, the map $\eta: K \to K, a \mapsto \kappa^1(a)^{-1} \cdot \kappa^0(a) $ is a group morphism that is valued in $ZK$, and satisfies the formula \[ \eta \circ \pi(v) = \ad(c_v) \circ \wt{\pi}(\ad_\varphi(v)) \circ \eta\] for all $v \in V$.
\end{lemma}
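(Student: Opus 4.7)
The plan is to handle the three assertions in order. For the first, I would exploit commutators in $K\cong \prod_{\QQ_2}\Gamma$: if $a(x)\in Z\Gamma$, then for every $h\in\Gamma$ the element $h_x$ commutes with $a$, since only the $x$-th coordinate of the pointwise product matters and there $a(x)$ commutes with $h$ by hypothesis. Applying the homomorphism $\kappa := \theta|_K = \kappa^0\cdot\zeta$ and cancelling the factors contributed by $\zeta$ (which lies in the centre), I am left with $[\kappa^0(a),\kappa_x(h)_{\varphi(x)}]=e$ in $\wt K$, using the identification $\kappa^0(h_x)=\kappa_x(h)_{\varphi(x)}$ coming from Proposition \ref{prop:finitesuppiso}. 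Evaluating this commutator at the coordinate $\varphi(x)$ of $\wt K\cong\prod_{\QQ_2}\wt\Gamma$ and letting $h$ range over $\Gamma$ (using that $\kappa_x$ is surjective) forces $\kappa^0(a)(\varphi(x))\in Z\wt\Gamma$.

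For the assertion that $\eta$ is valued in the centre, I reduce to the first claim by a subtraction trick. Fix $a\in K$ and $x\in\QQ_2$ and set $b := a\cdot (a(x)^{-1})_x \in K$, so that $b(x)=e$. The first claim gives $\kappa^0(b)(\varphi(x))\in Z\wt\Gamma$. On the other hand, multiplicativity of $\kappa$ and centrality of $\zeta$ force $\kappa^0(b)\equiv \kappa^0(a)\cdot\kappa^0((a(x)^{-1})_x)\pmod{Z\wt K}$, and Proposition \ref{prop:finitesuppiso} identifies the second factor at coordinate $\varphi(x)$ as $\kappa_x(a(x))^{-1}$. Combining these shows $\kappa^0(a)(\varphi(x))\in\kappa_x(a(x))\cdot Z\wt\Gamma$, which rearranges to $\eta(a)(\varphi(x))\in Z\wt\Gamma$. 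Since $\varphi$ stabilises $\QQ_2$ by Proposition \ref{prop:finitesuppiso}, this exhausts every coordinate of $\eta(a)$.

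Multiplicativity of $\eta$ is then a direct manipulation: expanding $\eta(ab)=\kappa^1(ab)^{-1}\kappa^0(ab)$ using that $\kappa^1$ is a homomorphism, that $\kappa=\kappa^0\cdot\zeta$ is a homomorphism with $\zeta$ central, and that the already-established centrality of $\eta(a)$ lets it commute past $\kappa^1(b)$, collapses the right-hand side to $\eta(a)\eta(b)$. For the equivariance formula, I would combine the equivariance of $\kappa^1$ established in equation $(*)$ of the proof of Proposition \ref{prop:finitesuppiso} with the equivariance of $\kappa$ itself, which follows because $\theta$ is a homomorphism and conjugation by $\theta(v)=c_v\cdot\ad_\varphi(v)$ on $\wt K$ is precisely $\ad(c_v)\circ\wt\pi(\ad_\varphi(v))$. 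Throughout I would use that $\zeta$, being $Z\wt G$-valued and hence by Lemma \ref{lem:wreathcentre} constant and $V$-fixed, is invariant under both conjugation and the $V$-action.

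The main obstacle is the equivariance step: promoting equivariance of $\kappa$ into equivariance of $\kappa^0$ requires the ``error'' $\zeta(\pi(v)(a))\zeta(a)^{-1}$ to vanish, equivalently that $\zeta$ is itself $V$-invariant. This should fall out of the construction in Theorem \ref{thm:isodecomp} together with Lemma \ref{lem:wreathcentre}, but it is the one step that uses genuine internal information about the decomposition rather than purely formal manipulations with the centrality of $\zeta$ and of $\eta(a)$.
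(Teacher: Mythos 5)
Your proposal is correct and follows essentially the same route as the paper: the same commutator argument (pushing $[a,h_x]=e$ through $\kappa$ and discarding the central $\zeta$-factors, then using surjectivity of $\kappa_x$) for the first claim, the same trick of subtracting off $a(x)_x$ to reduce centrality of $\eta(a)$ to the first claim, and the same formal manipulations for multiplicativity and equivariance. The one point you flag as an obstacle --- that equivariance of $\kappa$ must be promoted to equivariance of $\kappa^0$, i.e.\ that $\zeta$ is $V$-invariant as a map --- is indeed supplied by Theorem \ref{thm:isodecomp}, which is exactly where the paper's proof takes the equivariance of $\kappa^0$ from, so your deferral is the right call.
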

\begin{proof}
Suppose that $a \in K$ and $x \in \QQ_2$ satisfy $a(x) \in Z \Gamma$. Then if $\wt{g} \in \wt{\Gamma}$,
\begin{align*}
    [\kappa^0(a), \wt{g}_{\varphi(x)}] &= \kappa^0\left( \left[a, (\kappa^0)^{-1}\left( \wt{g}_{\varphi(x)}\right)\right]\right) \\
    &= \kappa^0\left([a, g_x] \right)
    \end{align*} for some $g \in \Gamma.$ Since $a(x) \in Z \Gamma,$ we obtain that $[a,g_x] = e_K,$ and so $[\kappa^0(a), \wt{g}_{\varphi(x)}] = e_{\wt{K}}.$ Evaluating at $\varphi(x)$ yields that $\kappa^0(a)(\varphi(x)) \in Z \wt{\Gamma},$ which proves the first part of the lemma.

    Now letting $a \in K$ and $x \in \QQ_2$ be arbitrary, and defining $a' := a(x)_x^{-1} \cdot a$, we have that
        \begin{align*}
            \kappa^1(a)^{-1} \cdot \kappa^0(a) &= \kappa^1(a)^{-1} \cdot \kappa^0(a(x)_x) \cdot \kappa^0(a').
        \end{align*}
    Evaluating the above at $\varphi(x),$ we obtain that 
    \begin{align*}
        \eta(a)(\varphi(x)) &= \kappa^1(a)(\varphi(x))^{-1} \cdot \kappa^0(a(x)_x)(\varphi(x)) \cdot \kappa^0(a')(\varphi(x)) \\
        &= \kappa^1(a(x)_x)(\varphi(x))^{-1} \cdot \kappa^1(a(x)_x)(\varphi(x)) \cdot \kappa^0(a')(\varphi(x)) \\
        &= \kappa^0(a')(\varphi(x)).
    \end{align*} Thus, $\eta(a)(\varphi(x)) \in Z \wt{\Gamma}$ by applying the first part of the lemma to the fact that $a'(x) = e_\Gamma$. If $b$ is another element of $K$, we have that \[\eta(ab) = \kappa^1(b)^{-1} \eta(a) \kappa^0(b) = \eta(a)\eta(b).\] Thus, $\eta$ is a group morphism $K \to ZK$.
    Finally, if $v \in V,$ then
    \begin{align*}
        \eta(\pi(v)(a)) &= \kappa^1(\pi(v)(a))^{-1} \cdot \kappa^0(\pi(v)(a)) \\
                        &= \ad(c_v) \circ \wt{\pi}(\ad_{\varphi}(v)) \left( \kappa^1(a)^{-1} \cdot \kappa^0(a)\right) \\ &= \ad(c_v) \circ \wt{\pi}(\ad_{\varphi}(v)) \circ \eta(a).
    \end{align*}
\end{proof}
\subsection{From endomorphism to automorphism}\label{subsec:endtoaut}
We finish Section \ref{sec:wreath} by demonstrating that for each group $\Gamma$ and endomorphism $\alpha: \Gamma \to \Gamma,$ there is a group $\limg$ and an automorphism $\lima$ of $\limg$ such that $G(\alpha) \cong G(\lima)$. Thus, $G(\alpha)$ is a wreath product $\varprojlim \Gamma \wr V$ (Corollary \ref{cor:wreathiso}). Using Theorem \ref{thm:thinclassification}, we obtain a (weaker) classification of these groups $G(\alpha)$ with $\alpha \in \mathrm{End}(\Gamma)$.

Let $\Gamma$ be a group, and $\alpha: \Gamma \to \Gamma$ be an endomorphism of $\Gamma$.
\begin{definition}
Define $\limg$ to be the inverse limit of the inverse system of groups
    \[ 
    \Gamma \xleftarrow{\alpha} \Gamma \xleftarrow{\alpha} \Gamma \xleftarrow{\alpha} \cdots,
    \] 
so that 
    \[
    \limg = \left\{(g_n) \in \prod_{\NN} \Gamma \ | \ g_n = \alpha(g_{n+1}) \text{ for all } n \in \NN\right\}.
    \] 
Let $\lima$ be the map $\limg \to \prod_{\NN} \Gamma$ defined by the formula 
    \[
    \left[\left(\lima\right)(g)\right]_n := \alpha(g_n), \ g \in \limg, n \in \NN.
    \] 
\end{definition}
\begin{lemma}
The map $\lima: \limg \to \prod_{\NN} \Gamma$ takes values in $\limg$, and the resulting map $\lima: \limg \to \limg$ is a group automorphism. Moreover, the inverse of $\lima$ is the left shift $ \limg \to \limg, (g_n) \mapsto (g_{n+1})$.
\end{lemma}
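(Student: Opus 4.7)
The plan is a sequence of three short verifications, each of which follows immediately from the defining cocycle relation $g_n = \alpha(g_{n+1})$ for elements $g = (g_n) \in \limg$. There is no substantive obstacle here; the only thing to be careful about is keeping track of which side of that relation one is applying $\alpha$ to.

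First, I would check that $\lima$ actually lands in $\limg$. Given $g = (g_n) \in \limg$, applying $\alpha$ to the identity $g_n = \alpha(g_{n+1})$ yields $\alpha(g_n) = \alpha(\alpha(g_{n+1}))$, which unwinds to $[\lima(g)]_n = \alpha([\lima(g)]_{n+1})$, exactly the condition placing $\lima(g)$ in $\limg$. Since $\lima$ is by construction the restriction of the coordinatewise application of $\alpha$ to the subgroup $\limg \leq \prod_\NN \Gamma$, it is automatically a group morphism.

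Next, I would introduce the candidate inverse $\sigma : \limg \to \prod_\NN \Gamma$ defined by $\sigma(g)_n := g_{n+1}$, and check $\sigma(g) \in \limg$ via $\sigma(g)_n = g_{n+1} = \alpha(g_{n+2}) = \alpha(\sigma(g)_{n+1})$. Finally, both compositions collapse coordinatewise to the identity: one has $(\lima \circ \sigma)(g)_n = \alpha(g_{n+1}) = g_n$ directly from the defining relation of $\limg$, and $(\sigma \circ \lima)(g)_n = [\lima(g)]_{n+1} = \alpha(g_{n+1}) = g_n$ for the same reason. Hence $\sigma$ is a two-sided inverse of $\lima$, which identifies $\lima$ as a group automorphism and $\sigma$ as the asserted left shift.
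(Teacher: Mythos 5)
Your proof is correct and follows essentially the same route as the paper: the same one-line computation showing $\lima(g)$ satisfies the defining relation of $\limg$, with the paper leaving the verification that the left shift is a two-sided inverse as an exercise, which you simply carry out explicitly (and correctly). No issues.
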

\begin{proof}
Suppose that $g \in \limg,$ and $n \in \NN.$ Then \begin{align*}
    \alpha\left(\left[\left(\lima\right)(g)\right]_{n+1}\right) &= \alpha^2(g_{n+1}) \\ &= \alpha(g_n) \\ &= \left[\left(\lima\right)(g)\right]_n,
\end{align*}  proving that $\lima$ takes values in $\limg.$

It is clear that $\lima \in \text{End}(\limg)$, and the left shift map $\limg \to \limg, (g_n) \mapsto (g_{n+1})$ is easily verified to be the inverse of $\lima$.
\end{proof}
\begin{lemma}\label{lem:autouniprop}
The group $\limg$ and the automorphism $\lima \in \aut(\limg)$ make the diagram
\[\begin{tikzcd}
	\limg & \limg \\
	\Gamma & \Gamma
	\arrow[from=1-1, to=2-1]
	\arrow["\lima", from=1-1, to=1-2]
	\arrow["\alpha"', from=2-1, to=2-2]
	\arrow[from=1-2, to=2-2]
\end{tikzcd} \tag{I}\]
commute (all arrows $\limg \to \Gamma$ denote the projection map $(g_n) \mapsto g_0$), and are universal with respect to this property, in the sense that every commutative diagram 
\[\begin{tikzcd}
	{\wt{\Gamma}} & {\wt{\Gamma}} \\
	\Gamma & \Gamma
	\arrow["p"', from=1-1, to=2-1]
	\arrow["{\wt{\alpha}}", from=1-1, to=1-2]
	\arrow["\alpha"', from=2-1, to=2-2]
	\arrow["p", from=1-2, to=2-2]
\end{tikzcd} \tag{II}\]
with $\wt{\Gamma}$ a group, $\wt{\alpha} \in \aut(\wt{\Gamma}),$ and $p: \wt{\Gamma} \to \Gamma$ a group morphism, factors uniquely through the diagram $(*)$ as illustrated below:
\[
\begin{tikzcd}
 \wt{\Gamma} \arrow[r, "\wt{\alpha}"] \arrow[d, "\psi", dashed] \arrow[dd, "p"', bend right=49] &  \wt{\Gamma} \arrow[d, "\psi"', dashed] \arrow[dd, "p", bend left=49] \\
\limg \arrow[r, "\lima"] \arrow[d]                                                              & \limg \arrow[d]                                                       \\
\Gamma \arrow[r, "\alpha"']                                                                     & \Gamma                                                               
\end{tikzcd} \tag{III}\]
\end{lemma}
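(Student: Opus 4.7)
The plan is to construct the unique $\psi$ demanded by the universal property via an explicit formula that is forced on us as soon as we track where $\wt{\alpha}$-preimages must go. Commutativity of diagram (I) is immediate: either composite $\limg \to \Gamma$ sends $g = (g_n)$ to $\alpha(g_0)$, so I would dispense with this in a single line.

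For the universal property, given the square (II), I would define $\psi: \wt{\Gamma} \to \limg$ by the formula
\[
\psi(\wt{g})_n := p(\wt{\alpha}^{-n}(\wt{g})), \quad n \in \NN,
\]
where the hypothesis $\wt{\alpha} \in \aut(\wt{\Gamma})$ is essential in order that the negative powers make sense. The four things that need checking then break up cleanly: (a) $\psi(\wt{g}) \in \limg$ is a one-line application of $p \circ \wt{\alpha} = \alpha \circ p$; (b) $\psi$ is a group morphism, automatic since it is built from group morphisms; (c) the triangle with $p$ in diagram (III) follows by setting $n = 0$; and (d) the horizontal compatibility $\lima \circ \psi = \psi \circ \wt{\alpha}$ reduces to a routine index shift.

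The step I would flag as the substantive one is uniqueness, which is also where the invertibility of $\wt{\alpha}$ resurfaces. Suppose $\psi': \wt{\Gamma} \to \limg$ also makes diagram (III) commute. The triangle forces $\psi'(\wt{g})_0 = p(\wt{g})$ for all $\wt{g} \in \wt{\Gamma}$. Rewriting the horizontal square as $\psi' \circ \wt{\alpha}^{-1} = \lima^{-1} \circ \psi'$ and iterating $n$ times, I would then invoke the identification of $\lima^{-1}$ with the left shift (so that $\lima^{-n}(h)_0 = h_n$) to deduce
\[
\psi'(\wt{g})_n = \lima^{-n}(\psi'(\wt{g}))_0 = \psi'(\wt{\alpha}^{-n}(\wt{g}))_0 = p(\wt{\alpha}^{-n}(\wt{g})),
\]
which matches the formula defining $\psi$. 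I do not expect any serious obstacle here; the statement is essentially the defining universal property of an inverse limit, and the only point needing care is to consistently exploit that $\wt{\alpha}$ is invertible rather than merely an endomorphism.
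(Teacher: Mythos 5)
Your proposal is correct and follows essentially the same route as the paper: both derive the forced formula $\psi(\wt{g})_n = p(\wt{\alpha}^{-n}(\wt{g}))$ by combining the triangle condition at $n=0$ with the identification of $\lima^{-1}$ as the left shift, and then take this formula as the definition of $\psi$. Your write-up is in fact slightly more complete, since you explicitly verify existence (that the formula lands in $\limg$, is a morphism, and makes (III) commute), which the paper leaves as an assertion.
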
 
\begin{proof}
Commutativity of the diagram (I) is quickly verified. Let $\wt{\Gamma}$ be a group, $\wt{\alpha} \in \aut(\wt{\Gamma}),$ and $p: \wt{\Gamma} \to \Gamma$ be a group morphism making the diagram (II) commute. 

If $\psi: \wt{\Gamma} \to \limg$ makes (III) commute, immediately
    \begin{align*}
    \psi \circ \wt{\alpha}^{-n}(\wt{g}) &= \lima^{-1} \circ \psi \circ \wt{\alpha}^{-(n-1)}(\wt{g})  = \cdots = \lima^{-n} \circ \psi(\wt{g})
    \end{align*} for all $n \in \NN$. Evaluating the above at $0 \in \NN$, for all $n \in \NN$ we have that
    \begin{align*}
       \psi(\wt{g})_n &= \left[\psi \circ \wt{\alpha}^{-n}(\wt{g})\right]_0 \\ &= \left[\psi \circ \wt{\alpha}^{-1} \left( \wt{\alpha}^{1-n} (\wt{g}_0)\right)\right]_0 \\ &= p \circ \alpha^{-1} \left(\wt{\alpha}^{-(n-1)}(\wt{g})\right) \\ &= p \circ \wt{\alpha}^{-n}(\wt{g}).
    \end{align*} The above defines the required map $\psi: \wt{\Gamma} \to \limg$.
\end{proof}
    \begin{example}
    If $\alpha \in \aut(\Gamma),$ then the projection $\limg \to \Gamma$ is an isomorphism.
    \end{example}
    \begin{example}\label{eg:triviallimgrp}
    Suppose that $\Gamma = \ZZ,$ $q \in \mathbb{Z}$, and that $\alpha: n \mapsto qn.$ A sequence of integers in $\limg$ must be constant and equal to $0$, so $\limg$ is the trivial group.
    \end{example}
    \begin{example}\label{eg:pontduality}
    Suppose that $\Gamma$ is the compact multiplicative group $S^1$ of unit-length complex numbers. Let $q$ be a non-zero integer, and define $\alpha: z \mapsto z^q$, so that $\limg$ is the compact group consisting of sequences $(z_n)$ of points in $S^1$ with $z_{n+1}^q = z_n$ for all $n \in \NN$. The Pontryagin dual $\widehat{\limg}$ of $\limg$ can then be identified with the direct limit $\varinjlim \widehat{\Gamma}$ of the diagram
    \[\widehat{\Gamma} \xrightarrow{\widehat{\alpha}} \widehat{\Gamma} \xrightarrow{\widehat{\alpha}} \widehat{\Gamma} \xrightarrow{\widehat{\alpha}} \cdots,\] 
    where $\widehat{\Gamma}$ is the dual group of $\Gamma$, and $\widehat{\alpha} : \widehat{\Gamma} \to \widehat{\Gamma}$ is the dual morphism of $\alpha$ (precomposition of characters with $\alpha$).
    
    Identifying $\widehat{\Gamma}$ with $\ZZ$, the dual map $\widehat{\alpha}$ becomes the multiplication map $\ZZ \to \ZZ, n \mapsto qn$, and the dual automorphism $\widehat{\lima}: \varinjlim \widehat{\Gamma} \to \varinjlim \widehat{\Gamma}$ is given by the formula \[\widehat{\lima}([n,m]) = [n,qm] \text{ for all $m,n \in \NN$.}\] However, $\varinjlim \widehat{\Gamma} \cong \ZZ[\frac 1 q]$ via the map $[n,m] \mapsto \frac{m}{q^n}$, and under this identification, the automorphism $\widehat{\varprojlim \alpha} \in \aut(\ZZ[\frac 1 q])$ is the multiplication map by $q$.
    
    Thus, $\limg = \widehat{\ZZ[\frac{1}{q}]}$ and $\lima = \widehat{M_q}$, where $M_q: \ZZ[\frac 1 q] \to \ZZ[\frac 1 q], x \mapsto qx$.
    \end{example}
\begin{proposition}\label{prop:endtoaut}
We have that $G(\alpha) \cong G(\lima).$
\end{proposition}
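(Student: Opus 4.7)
The plan is to apply Lemma \ref{lem:contrafunctoriality} with $\beta$ taken to be the projection $p : \limg \to \Gamma, \ (g_n)_{n \in \NN} \mapsto g_0$. By Lemma \ref{lem:autouniprop} we have $p \circ \lima = \alpha \circ p$, so writing $\omega(g,h) := \alpha(g)$ and $\wt{\omega}(g,h) := \lima(g)$, the square
\[
\begin{tikzcd}
{\limg^2} & \limg \\
{\Gamma^2} & \Gamma
\arrow["{\wt{\omega}}", from=1-1, to=1-2]
\arrow["\omega"', from=2-1, to=2-2]
\arrow["{p \times p}"', from=1-1, to=2-1]
\arrow["p", from=1-2, to=2-2]
\end{tikzcd}
\]
commutes. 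Lemma \ref{lem:contrafunctoriality} then produces a $V$-equivariant group morphism $\kappa : K(\lima) \to K(\alpha)$, $\wt{a} \mapsto (u \mapsto \wt{a}(u)_0)$, which extends to a group morphism $\theta : G(\lima) \to G(\alpha)$ acting as the identity on the $V$-factor. It therefore suffices to show that $\kappa$ is bijective, since then $\theta$ is an isomorphism of semidirect products.

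To do this I would construct the explicit inverse $\phi: K(\alpha) \to K(\lima)$ by the formula
\[
\phi(a)(u) := (a(u 0^n))_{n \in \NN}, \quad u \in \{0,1\}^*.
\]
Two well-definedness checks are required. First, that $\phi(a)(u) \in \limg$, i.e., $\alpha(a(u 0^{n+1})) = a(u 0^{n})$ for each $n$; this is immediate from the defining relation $a(v) = \omega(a(v0), a(v1)) = \alpha(a(v0))$ for $a \in K(\alpha)$ applied at $v = u 0^n$. Second, that $\phi(a) \in K(\lima)$, i.e., $\phi(a)(u) = \lima(\phi(a)(u0))$, which is a coordinate-wise consequence of the same identity.

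The equality $\kappa \circ \phi = \mathrm{id}_{K(\alpha)}$ is immediate, since $\kappa(\phi(a))(u) = \phi(a)(u)_0 = a(u)$. For $\phi \circ \kappa = \mathrm{id}_{K(\lima)}$ the key step uses the identification of $\lima^{-1}$ with the left shift from the lemma preceding Lemma \ref{lem:autouniprop}: iterating $\wt{a}(u0) = \lima^{-1}(\wt{a}(u))$ for $\wt{a} \in K(\lima)$ gives $\wt{a}(u 0^n) = \lima^{-n}(\wt{a}(u))$, and evaluating at the zeroth coordinate yields $\wt{a}(u 0^n)_0 = \wt{a}(u)_n$, which rearranges to $\phi(\kappa(\wt{a}))(u) = \wt{a}(u)$.

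No substantive obstacle is expected; the main subtlety is keeping the two indexing conventions straight --- the position $n$ inside the word $u 0^n$ and the coordinate $n$ of a sequence in $\limg$ --- both of which simply record how many times one has unwound the endomorphism $\alpha$.
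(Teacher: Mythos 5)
Your proposal is correct and follows essentially the same route as the paper: both obtain the $V$-equivariant morphism $K(\lima) \to K(\alpha)$ from Lemma \ref{lem:contrafunctoriality} applied to the square built from the projection $p$, and both invert it via $u \mapsto (a(u0^n))_{n \in \NN}$ (the paper's $\widehat{a}$ is exactly your $\phi(a)$, with your two-sided inverse verification replacing the paper's separate injectivity and surjectivity checks).
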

\begin{proof}
Since the diagram $(I)$ of Lemma \ref{lem:autouniprop} commutes, by Lemma \ref{lem:contrafunctoriality} we obtain a $V$-equivariant group morphism $ \theta: K(\lima) \to K(\alpha)$, defined by the formula 
    \[ 
    \theta(a)(u) =  p(a(u)), \ a \in K(\lima), \ u \in \{0,1\}^*, 
    \] 
where $p: \limg \to \Gamma, (g_n) \mapsto g_0$ is the canonical projection. 

Suppose that $a \in K(\lima),$ and that $\theta(a) = e_{K(\alpha)},$ so that $p(a(u)) = e_\Gamma$ for all $u \in \{0,1\}^*$. If $u \in \{0,1\}^*$ and $n \in \NN$, since $\left(\lima\right)^{-1} :(g_n) \mapsto (g_{n+1})$, we have that  
\begin{align*}
   a(u)_n &=  p \circ \left(\lima\right)^{-n} (a(u)) \\ 
          &= p (a(u0^n)) \\ &= e_\Gamma.
\end{align*} Thus, $a$ is the identity element of $K(\lima)$, and so $\theta$ is injective. 

One can quickly verify that for each $a \in K(\alpha)$, the map $\widehat{a}: \{0,1\}^* \to \limg$ given by the formula \[ \widehat{a}(u)(n):= a(u0^n), \ u \in \{0,1\}^*, \ n \in \NN\] is an element of $K(\lima)$, and satisfies $\theta(\widehat{a}) = a$. Thus, $\theta : K(\lima) \to K(\alpha)$ is a $V$-equivariant isomorphism, yielding that $G(\lima) \cong G(\alpha)$.
\end{proof} 
We arrive at the following corollary of Theorem \ref{thm:thinclassification} and Proposition \ref{prop:endtoaut}.
\begin{corollary}
    Suppose that $\Gamma$ and $\wt{\Gamma}$ are groups with endomorphisms $\alpha \in \mathrm{End}(\Gamma)$, and $\wt{\alpha} \in \mathrm{End}(\wt{\Gamma})$. Then 
        \[
        G(\alpha) \cong G(\wt{\alpha}) \text{ if and only if $\varprojlim \wt{\alpha} = \ad(\widetilde{h}) \circ \beta \circ \lima \circ \beta^{-1}$}
        \] for some $\wt{h} \in \varprojlim \wt{\Gamma}$ and some isomorphism $\beta: \limg \to \varprojlim \wt{\Gamma}$.
\end{corollary}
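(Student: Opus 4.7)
The plan is to chain together the two main results of the subsection: Proposition \ref{prop:endtoaut}, which replaces any endomorphism by an honest automorphism on the inverse limit without changing the associated group, and Theorem \ref{thm:thinclassification}, which classifies $G(\alpha) \cong G(\wt{\alpha})$ in the automorphism case.

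First, I would apply Proposition \ref{prop:endtoaut} on both sides of the isomorphism question, obtaining $G(\alpha) \cong G(\varprojlim \alpha)$ and $G(\wt{\alpha}) \cong G(\varprojlim \wt{\alpha})$. Composing these isomorphisms, the condition $G(\alpha) \cong G(\wt{\alpha})$ is equivalent to $G(\varprojlim \alpha) \cong G(\varprojlim \wt{\alpha})$. Since $\varprojlim \alpha \in \aut(\varprojlim \Gamma)$ and $\varprojlim \wt{\alpha} \in \aut(\varprojlim \wt{\Gamma})$ by the lemma immediately following Definition of $\varprojlim \alpha$, Theorem \ref{thm:thinclassification} applies verbatim to this pair of automorphisms.

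Invoking Theorem \ref{thm:thinclassification} with $(\Gamma, \alpha)$ replaced by $(\varprojlim \Gamma, \varprojlim \alpha)$ and $(\wt{\Gamma}, \wt{\alpha})$ replaced by $(\varprojlim \wt{\Gamma}, \varprojlim \wt{\alpha})$, this last isomorphism of $G$-groups is equivalent to the existence of an isomorphism $\beta: \varprojlim \Gamma \to \varprojlim \wt{\Gamma}$ and an element $\wt{h} \in \varprojlim \wt{\Gamma}$ with $\varprojlim \wt{\alpha} = \ad(\wt{h}) \circ \beta \circ \varprojlim \alpha \circ \beta^{-1}$, which is the stated condition.

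There is essentially no obstacle here, as both ingredients are already proved; the only mild point to record in the write-up is that Proposition \ref{prop:endtoaut} gives an isomorphism of groups rather than merely a common quotient, so that the equivalence really does transport in both directions. The resulting proof fits in a few lines.
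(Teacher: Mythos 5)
Your proposal is correct and is exactly the argument the paper intends: the paper states this result as an immediate corollary of Proposition \ref{prop:endtoaut} (applied to both $\alpha$ and $\wt{\alpha}$) and Theorem \ref{thm:thinclassification} applied to the automorphisms $\lima$ and $\varprojlim\wt{\alpha}$, without writing out further details. Your remark that the reduction is a genuine isomorphism (so the equivalence transports both ways) is the right point to record, and note that Theorem \ref{thm:thinclassification} already covers the case where an inverse limit degenerates to the trivial group (as in Example \ref{eg:triviallimgrp}), so no extra hypothesis is needed.
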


\section{Automorphisms of untwisted wreath products}\label{sec:automorphisms}
In this section, we decompose the automorphism group of an unrestricted, untwisted wreath product $G = \prod_{\QQ_2} \Gamma \rtimes V$ into an iterated semidirect product \[(((( A_6\rtimes A_5 ) \rtimes A_4) \rtimes A_3) \rtimes A_2) \rtimes A_1\] of subgroups $A_i \leq \aut(G)$ (Theorem \ref{thm:autdecomp}).

Let $\Gamma$ be a non-trivial group, put $K := \prod_{\QQ_2} \Gamma$, and let $G := K \rtimes V$ be the unrestricted, untwisted permutational wreath product coming from the action $V \curvearrowright \QQ_2$. Recall that $K$ can be identified with the group $K(\omega)$, where $\omega: \Gamma^2 \to \Gamma, (g,h) \mapsto g$ (Proposition \ref{prop:wreathaction}). This is equivariant with respect to the actions $V \curvearrowright K, K(\omega)$, so that $G \cong G(\omega)$ (Corollary \ref{cor:wreathiso}). Since we denote the action $V \curvearrowright K(\omega)$ by $\pi$, we will do the same for the wreath action $V \curvearrowright K$. 

We also freely use the notations of Theorem \ref{thm:isodecomp} and Proposition \ref{prop:finitesuppiso}. In particular, for each $\theta \in \aut(G)$, $av \in G$, $x \in \QQ_2$, and $g \in \Gamma$, we write
\begin{itemize}
    \item $\theta(av) = \kappa^0(a) \cdot \zeta(a) \cdot c_v \cdot \ad_{\varphi}(v)$,
    \item $\theta(a) = \kappa(a) = \kappa^0(a) \cdot \zeta(a)$,
    \item $\kappa^0(a)(g_x) = \kappa_x(g(x))$,
    \item $\kappa^1(a)(\varphi(x)) = \kappa_x(a(x))$.
\end{itemize} The fact that Theorem \ref{thm:isodecomp} applies to automorphisms of $G$ can be seen in two ways: the action $V \curvearrowright K(\omega)$ is implemented by an isomorphism $R(\omega): K(\omega) \to K(\omega)^2$, and the action $V \curvearrowright K$ is implemented by the isomorphism $K \to K^2$ given by the bijection $\QQ_2 \to \QQ_2 \sqcup \QQ_2$ of Example \ref{eg:cantordyadicjonesiso}.
\begin{lemma}
The map $\epi_1: \aut(G) \to \stabn$ mapping an automorphism $\theta \in \aut(G)$ to the homeomorphism $\varphi \in \stabn$ obtained by writing $\theta(v) = c_v \cdot \ad_\varphi(v)$ for all $v \in V$ is a split group epimorphism.
\end{lemma}
\begin{proof}
Well-definition of $\epi_1$ follows from Theorem \ref{thm:isodecomp} and Proposition \ref{prop:finitesuppiso}. A quick computation shows that $\epi_1$ is a group morphism, and a section for $\epi_1$ is the map $\stabn \to \aut(G), \varphi \mapsto \left(av \mapsto (a \circ \varphi^{-1}) \cdot \ad_{\varphi}(v) \right)$.
\end{proof} Thus, defining $A_1 := \stabn,$ we have that \[ \aut(G) = \ker \epi_1 \rtimes A_1.\] 
\begin{lemma}\label{lem:autsplit}
    Suppose that $\theta \in \ker \chi_1,$ so that $\theta(av) = \kappa(a) \cdot c_v \cdot v$ for all $av \in G.$ Letting $\epi_2(\theta) := \kappa_{00\cdots} \in \aut(\Gamma)$, the resulting mapping $\epi_2: \ker \chi_1 \to \aut(\Gamma)$ is a split group epimorphism.
\end{lemma}
\begin{proof}
    A direct computation shows that $\epi_2$ is a group morphism, and a section for $\epi_2$ is the map $\aut(\Gamma) \to \aut(G), \beta \mapsto \left(av \mapsto \overline{\beta}(a) v\right),$ where for each $\beta \in \aut(\Gamma)$,
        \[ \overline{\beta}(a)(x) := \beta(a(x)), \ a \in K, \ x \in \QQ_2.\]
\end{proof}
We thus have a further decomposition \[ \ker \epi_1 = \ker \epi_2 \rtimes A_2,\] where $A_2 := \aut(\Gamma).$ 

Our next step is to construct a split epimorphism $\epi_3: \ker \epi_2 \to Z \Gamma$.
\begin{lemma}\label{lem:kappa1inner}
    If $\theta \in \ker \epi_2$, then there exists $h \in K$ such that $\kappa^1 = \ad(h)$.       
\end{lemma}
\begin{proof}
    If $\theta \in \ker \epi_2$, then since $\kappa_{00\cdots} = \text{id}_{\Gamma}$, for all $v \in V$, $x \in \QQ_2,$ and $g \in \Gamma$ we have that 
    \begin{align*}
    \theta^1(v \cdot g_x \cdot v^{-1}) &= \theta^1(g_{vx}) = \ad(c_v)\kappa_x(g)_{vx} \\ &= \kappa_{vx}(g)_{vx},
    \end{align*} yielding that $\ad(c_v(vx)) \circ \kappa_x = \kappa_{vx}.$ Since the action $V \curvearrowright \QQ_2$ is transitive and $\kappa_{00\cdots} = \text{id}_\Gamma,$ the automorphism $\kappa_x \in \aut(\Gamma)$ is inner for all $x \in \QQ_2$. For each $x \in \QQ_2,$ choose $v_x \in V$ such that $v_x(00\cdots) = x,$ and define $h(x) := c_{v_x}(x)$. It is immediate that $\kappa^1 = \ad(h).$
\end{proof}
Suppose that $\theta \in \ker \epi_2$, and that $h \in K$ satisfies $\kappa^1 = \ad(h)$. Then for all $v \in V$ and $x \in \QQ_2,$
\begin{align*}
\kappa^1(vav^{-1})(vx) = \ad(h(vx))(a(x)) = \ad(c_v(vx) \cdot h(x))(a(x)),
\end{align*} implying that for all $v \in V$
    \[c_v = h \cdot \pi(v)(h)^{-1} = [h,v] \mod ZK.\]
Let $d(h): V \to ZK$ be the unique map satisfying 
    \[c_v = d(h)_v \cdot [h,v], \ v \in V.\] 
A quick computation shows that $d(h)$ satisfies the cocycle identity 
    \[d(h)_{vw} = d(h)_v \cdot \pi(v)(d_w) \text{ for all $v, w \in V$.}\]
We will now extract an element $z(h) \in Z \Gamma$ from the cocycle $d(h): V \to ZK$. We will need the following definition and classification of cocycles $V \to \prod_{\QQ_2} Z \Gamma$ given in \cite[Proposition 4.7]{jonestechI} to do so.
    \begin{definition}\label{def:centrecocycle}
    If $z \in Z \Gamma,$ let $s(z) : V \to K$ be the map defined by the formula 
        \[s(z)_v (x) := z^{\log_2(v'(v^{-1}x))}, \ v \in V, \ x \in \QQ_2.\]
    The chain rule implies that $s(z)$ is a cocycle, in the sense that $s(z)_{vw} = s(z)_{v} \cdot \pi(v)\left(s(z)_w\right)$ for all $v, w \in V$.
    \end{definition}
    \begin{proposition}\label{prop:abeliancoc}
    Suppose that $\delta: V \to \prod_{\QQ_2} Z\Gamma$ is a cocycle, i.e. satisfies the identity 
        \[\delta_{vw} = \delta_v \cdot \pi(v) \left( \delta_w\right), \ v,w \in V.\]
    Then there are unique elements $z \in Z \Gamma$ and $f \in \left( \prod_{\QQ_2} Z\Gamma \right) / Z\Gamma$ such that \[ \delta_v = s(z)_v \cdot f \cdot (\pi(v) (f))^{-1} = s(z)_v \cdot [f,v]\] for all $v \in V.$ Here $Z \Gamma$ is identified with the constant maps in $\prod_{\QQ_2} Z \Gamma$.
    \end{proposition}
Now consider the unique elements $z(h) \in Z\Gamma$ and $f(h) \in ZK/Z\Gamma$ satisfying 
    \[d(h)_v = s(z(h))_v \cdot [f(h),v] \text{ for all $v \in V$.}\] 
\begin{lemma}\label{lem:epi3welldef}
    Suppose that $h, h' \in K$ satisfy $\kappa^1 = \ad(h) = \ad(h')$. Then $z(h) = z(h')$.
\end{lemma}
\begin{proof}
        Writing $z, f, z', f'$ instead of $z(h), f(h), z(h'), f(h')$ respectively, for all $v \in V$ we have that
                \[c_v = s(z)_v  \cdot [f,v] \cdot [h,v] = s(z')_v  \cdot [f',v] \cdot [h',v].
                \]
        However, since $\ad(h) = \ad(h'),$ we have that $h' = ha$ for some $a \in ZK$. This implies that for all $v \in V$
            \begin{align*}
                s(z')_v \cdot [f',v] \cdot [h',v] &= s(z')_v \cdot [f',v] \cdot [ha,v] \\ &= s(z')_v \cdot [f'a,v] \cdot [h,v] \\
                          &= s(z)_v  \cdot [f,v] \cdot [h,v],
            \end{align*} 
        so
            \[ s(z')_v \cdot [f'a,v] = s(z)_v \cdot [f,v].\]
        Proposition \ref{prop:abeliancoc} implies that $z = z'$.
    \end{proof}
Lemmas \ref{lem:kappa1inner} and \ref{lem:epi3welldef} allow us to define a map $\epi_3: \ker \epi_2 \to Z \Gamma$ that sends an automorphism $\theta \in \ker \epi_2$ to the element $z(h) \in Z \Gamma$ obtained by writing $\kappa^1 = \ad(h)$ with $h \in K$.
\begin{lemma}\label{lem:epi3}
    The map $\epi_3: \ker \epi_2 \to Z \Gamma$ is a split epimorphism.
\end{lemma}
\begin{proof}
    A quick computation shows that $\epi_3$ is a group morphism, and a section to $\epi_3$ is the map $Z \Gamma \to \ker \chi_2, z \mapsto (av \mapsto a \cdot s(z)_v \cdot v).$
\end{proof} Lemma \ref{lem:epi3} yields the decomposition \[\ker \chi_2 = \ker \chi_3 \rtimes A_3, \ A_3 := Z\Gamma.\] Next, we will define a group morphism $\epi_4: \ker \epi_3 \to K / Z\Gamma$ that splits onto its image. Suppose that $\theta \in \ker \epi_3$, and write $\kappa^1 = \ad(h)$ for some $h \in K.$ Since $z(h) = e_\Gamma$, there is a map $f: \QQ_2 \to Z \Gamma$ such that
    \[c_v = [hf,v], \ v \in V.\]
Thus, for all $av \in G$,
    \begin{align*}
        \theta^1(av) &= h a h^{-1} \cdot [hf, v] \cdot v \\ &= (hf) \cdot a \cdot (hf)^{-1} \cdot [hf,v] \cdot v
        \\ &= (hf) \cdot av \cdot (hf)^{-1}.
    \end{align*} 
We obtain a group morphism \[ \epi_4: \ker \chi_3 \to K / Z\Gamma\] that maps $\theta \in \ker \epi_3$ to the element $h \in K / Z \Gamma$ satisfying $\theta^1 = \ad(h)$. Here we identify $ZG \leq K$ with $Z \Gamma$ (Lemma \ref{lem:wreathcentre}).

If $\theta \in \ker \epi_3$, when we write $\theta^1 = \ad(h)$ with $h \in K/Z \Gamma$, we have that $h(00\cdots) \in Z \Gamma$ since $\kappa_{00\cdots} = \mathrm{id}_\Gamma$. Conversely, if $h \in K / Z \Gamma$ satisfies $h(00\cdots) \in Z \Gamma$, it is quick to verify that the inner automorphism $\ad(h) \in \aut(G)$ is an element of $\ker \epi_3$. Hence, $\epi_4$ is a split epimorphism $\ker \epi_3 \to A_4$, where $A_4 := \{h \in K / Z \Gamma \ | \ h(00\cdots) \in Z \Gamma\}$. Therefore,
\[\ker \chi_3 = \ker \chi_4 \rtimes A_4.\] 
\begin{lemma}
The map $\chi_5: \ker \chi_4 \to \aut(G), \theta \mapsto \left(av \mapsto \kappa^0(a) \cdot v\right)$ is a group morphism that is split onto its image.
\end{lemma}
\begin{proof}
The fact that $\chi_5$ takes values in $\aut(G)$ follows from Lemma \ref{lem:centremorphism}. Suppose that $\theta_1, \theta_2 \in \ker \chi_4,$ and write \[ \theta_i(av) = \kappa_i^0(a) \cdot \zeta_i(a) \cdot v, \ i \in \{1,2\},\] using the notation of Theorem \ref{thm:isodecomp}. Then for all $av \in G$ we have that
\begin{align*}
    \theta_1 \circ \theta_2(av) &= \theta_1(\kappa_2^0(a) \cdot \zeta_2(a) \cdot v) \\ &= \kappa_1^0 \circ \kappa_2^0(a) \cdot \kappa_1^0( \zeta_2(a)) \cdot v.
\end{align*} Letting $\theta := \theta_1 \circ \theta_2$ and writing \[\theta(av) = \kappa^0(a) \cdot \zeta(a) \cdot v, \ av \in G,\] from Theorem \ref{thm:isodecomp} we have that $\kappa^0 = \kappa_1^0 \circ \kappa_2^0,$ showing that $\epi_5$ is indeed a group morphism. Moreover, $\epi_5$ splits via the inclusion $\text{im}(\epi_5) \hookrightarrow \ker(\epi_4).$
\end{proof}
Letting $A_5 := \text{im}(\epi_5)$, we have that \[ \ker \chi_4 = \ker \chi_5 \rtimes A_5, \] where \[ \ker \chi_5 = \{\theta \in \aut(G): \theta(av) = a \cdot \zeta(a) \cdot v \text{ for some } \zeta :G \to ZG\}. \] A moment's calculation shows that the map \[\ker \chi_5 \to \text{Hom}(G,ZG) , (av \mapsto a \cdot \zeta(a) \cdot v) \mapsto \zeta \] is an isomorphism of groups ($\text{Hom}(G,ZG)$ having pointwise multiplication). Letting $A_6 := \mathrm{Hom}(G,ZG)$, our work so far leads to the following result.
\begin{theorem}\label{thm:autdecomp}
We have a decomposition \[\aut(G) =    ((((A_6\rtimes A_5) \rtimes A_4)\rtimes A_3)\rtimes A_2)\rtimes A_1, \] where
\begin{itemize}
    \item $A_1 = \stabn,$
    \item $A_2 = \aut(\Gamma),$
    \item $A_3 = Z \Gamma,$
    \item $A_4 = \{h \in K \ | \ h(00\cdots) \in Z \Gamma\} / Z \Gamma,$
    \item $A_6 = \text{Hom}(G,ZG).$
\end{itemize} The group $A_5$ consists of the automorphisms $\theta \in \aut(G)$ satisfying the following conditions:
\begin{itemize}
    \item $\supp(\theta(a)) = \supp(a)$ for all $a \in K$,
    \item $\theta$ restricts to the identity on $\oplus_{\QQ_2} \Gamma$ and $V$.
\end{itemize}
\end{theorem}
\begin{proof}
We only need to justify the last part of the theorem. It quickly follows from the definition $A_5 := \mathrm{im}(\epi_5)$ that an automorphism $\theta \in \aut(G)$ is an element of $A_5$ if and only if $\theta(av) = \kappa^0(a) \cdot v$ for all $av \in G$, and $\kappa^1 = \mathrm{id}_K$. By Theorem \ref{thm:isodecomp}, the condition $\theta(a) = \kappa^0(a)$ is equivalent to saying that $\theta$ is support-preserving, and by definition, the condition $\kappa^1 = \mathrm{id}_K$ is equivalent to saying that $\theta$ is the identity on $\oplus_{\QQ_2}\Gamma$.
\end{proof}
\begin{remark}\label{rmk:exoticauto}
The most mysterious group in the decomposition of $\aut(G)$ in Theorem \ref{thm:autdecomp} is $A_5$, which doesn't seem to have a description in terms of less complicated groups. By Lemma \ref{lem:etalem}, for each $\theta \in A_5$ we can write $\theta(av) = a \cdot \eta(a) \cdot v$ for some $V$-equivariant group morphism $\eta: K \to ZK$. So if $\Gamma$ has trivial centre or is a perfect group, we have that $A_5 = \{\text{id}_G\}$. It is unknown whether $A_5$ is non-trivial even when $\Gamma = \ZZ_2$ (Questions \ref{question:z2auto} and \ref{question:z2autorephrased}).
\end{remark}
\begin{remark}\label{rmk:autpermwreath}
It is interesting to compare Theorem \ref{thm:autdecomp} with Houghton's description of $\aut(W)$ \cite{autstandwreath} for \textit{standard} unrestricted wreath products $W = \prod_{B} \Gamma \rtimes B$, where $B$ is an arbitrary group. Here $B \curvearrowright B$ by left multiplication, and $B \curvearrowright \prod_{B} \Gamma $ by translation.

If $\theta \in \aut(W)$, then for each $b \in B$ we have that 
\[
\theta(b) = c_b \cdot \phi_b 
\] for some unique $c_b \in \prod_{B} \Gamma$ and $\phi_b \in B$. The mapping $\phi: b \mapsto \phi_b$ then defines an automorphism of $B$, resulting in a group morphism $\phi: \aut(W) \to \aut(B), \theta \mapsto \phi(\theta)$. A section for $\phi$ is obtained by mapping each $\beta \in \aut(B)$ to the automorphism $\theta \in \aut(W)$ defined by the formula 
\[
\theta(f \cdot b) = (f \circ \beta^{-1}) \cdot \beta(b), \ f \in \prod_{B} \Gamma , \ b \in B. \tag{$\clubsuit$}
\] This results in the decomposition
\[\aut(W) = \ker \phi \rtimes \aut(B).\]
Letting $G := \prod_{\QQ_2} \Gamma \rtimes V$, the analogous group morphism $\epi_1: \aut(G) \to \aut(V)$ used in the first step of the proof of Theorem \ref{thm:autdecomp} is obtained in the same way.

One difference between $\phi$ and $\epi_1$ is that the image of $\epi_1$ is not the full automorphism group of $V$. Every automorphism of $V$ is implemented via conjugation by a unique homeomorphism of the Cantor space (Proposition \ref{prop:autv}). In Proposition \ref{prop:finitesuppiso}, we saw that the homeomorphism $\varphi \in \mathrm{Homeo}(\cantor)$ implementing the automorphism $\epi_1(\theta) \in \aut(V)$ must stabilise the dyadic rationals $\QQ_2$. The bit-flipping homeomorphism $\varphi \in \cantor$ (Remark \ref{rmk:bitflip}) normalises $V$, but does not stabilise $\QQ_2$, so $\im {\epi_1} $ is properly contained in $\aut(V)$.

The group morphism $\epi_1 : \aut(G) \to \aut(V)$ is split onto its image for a different reason than $\phi$. A section for $\epi_1$ is given by mapping each $\varphi \in \stabn$ to the automorphism 
\[
av \mapsto (a \circ \varphi^{-1}) \cdot \varphi v \varphi^{-1}, \ a \in \prod_{\QQ_2} \Gamma, \ v \in V.
\]
This section to $\epi_1$ can be described as assigning each automorphism of $V$ in the image of $\epi_1$ a bijection of the base space $\QQ_2$, which can be used to define an automorphism of $G$. The section for $\phi$ in $(\clubsuit)$ is obtained in a similar but more straightforward way. So far, we have decompositions
\[\aut(W) = \ker \phi \rtimes \aut(B) \text{ and } \aut(G) = \ker \epi_1 \rtimes A_1, \] where $A_1 := \stabn$. The descriptions of $\aut(G)$ and $\aut(W)$ now diverge significantly. 

Every automorphism $\theta \in \ker \phi$ satisfies $\theta(b) = c_b \cdot b$ for some map $c: B \to \prod_{B} \Gamma$. Houghton then shows that there exists $g \in \prod_{B} \Gamma$ such that
\[ 
c_b = g b g^{-1} = [g,b] \cdot b, \ b \in B,
\]
so $\theta$ can be written as a (not necessarily unique) product $\theta_0 \circ \theta_1$, where $\theta_0$ fixes $B$ pointwise, and $\theta_1 = \ad(g)$ with $g \in \prod_{B} \Gamma$. This is not always true for elements of $\ker \epi_1$; if $z \in Z \Gamma \setminus \{e_\Gamma\}$, the automorphism $\theta \in A_3$ defined by the formula $\theta(av) := a \cdot s(z)_v \cdot v$ for each $av \in G$ (Definition \ref{def:centrecocycle}), does not map $V$ to a conjugate of $V$ by an element of $\prod_{\QQ_2} \Gamma$. This is the only obstruction: the elements $\theta \in \aut(G)$ which map $V$ to a conjugate of itself by an element of $\prod_{\QQ_2} \Gamma$ are precisely those satisfying $\theta_3 = \mathrm{id}_G$, viewing $\theta$ as a tuple $(\theta_i)_{1 \leq i \leq 6}$ with each $\theta_i \in A_i$ (Theorem \ref{thm:autdecomp}).

Instead, our next steps are to write $\ker \epi_1$ as a semidirect product $\ker \epi_2 \rtimes \aut(\Gamma)$, and $\ker \epi_2 = \ker \epi_3 \rtimes Z \Gamma$ using the split epimorphisms $\epi_2: \ker \epi_1 \to \aut(\Gamma)$, and $\epi_3: \ker \epi_2 \to \aut(\Gamma)$ from the proof of Theorem \ref{thm:autdecomp}. Now every $\theta \in \ker \epi_3$ satisfies $\theta(V) = a V a^{-1}$ for some $a \in \prod_{\QQ_2} \Gamma$. Following Houghton, we can write $\theta = \theta_0 \circ \theta_1$, where $\theta_1$ is an inner automorphism of $G$ implemented by an element $h \in \prod_{\QQ_2} \Gamma$. This product can be described using a split epimorphism $\epi_4: \ker \epi_3 \to A_4$, so it is unique.

The description of $\aut(W)$ is now reduced to the description of the automorphisms of $W$ that fix $B$ pointwise, and the description of $\aut(G)$ is reduced to the description of $\ker \epi_4$. Every element of $\ker \epi_4$ fixes $V$ pointwise, but the converse is not always true; there can be elements of $\aut(G)$ which fix $V$ pointwise, but are not in $\ker \epi_4$ (e.g. any non-trivial element of $A_2$).

After writing each automorphism $\theta \in \ker \phi$ as a product $\theta_0 \circ \theta_1$ with $\theta_0$ fixing $B$ pointwise, and $\theta_1$ an inner automorphism of $W$ implemented by an element of $\prod_{B} \Gamma$, Houghton decomposes $\theta_0$ uniquely as a product $\theta_{00} \circ \theta_{01}$. Here $\theta_{00}$ fixes the diagonal of $W$ along with $B$ pointwise, and $\theta_{01}$ can identified with an element of $\aut(\Gamma)$, in essentially the same way as $A_2 \leq \aut(G)$ can be identified with $\aut(\Gamma)$. This completes the description of $\aut(W)$ given in \cite{autstandwreath}, arriving at the decomposition 
\[
\aut(W) = ( \left(A_{D, B} \rtimes \aut(\Gamma)\right) \cdot I) \rtimes \aut(B) \tag{$\bigstar$},
\]
where $A_{D,B}$ denotes the automorphisms of $W$ which fix the diagonal subgroup $D \leq \prod_{B} \Gamma$ and $B$ pointwise, and $I$ denotes the inner automorphisms of $W$ corresponding to elements of $\prod_{B}\Gamma$.

In Theorem \ref{thm:autdecomp}, we finish the description of $\aut(G)$ by decomposing $\ker \epi_4$ as a semidirect product $\ker \epi_5 \rtimes A_5$, where $\ker \epi_5 \cong \mathrm{Hom}(G, ZG)$.

The least understood group appearing in the description of $\aut(W)$ given by Houghton is the group $A_{D,B}$. Houghton's strategy is to restrict $\Gamma$ and $B$ to be finite cyclic groups, so that the group $A_{D,B}$ has an adequate description. 
For our description of $\aut(G)$, the analogous group is $A_5$. We know that $A_5$ is trivial when $Z \Gamma$ is trivial or $\Gamma$ is perfect (Remark \ref{rmk:exoticauto}), but we have no idea what $A_5$ is in other cases (Question \ref{question:z2auto}).

In \cite{autpermwreath}, Hassanabadi extends Houghton's description to wreath products $W = \prod_{X} \Gamma \rtimes B$, where $X = B / H$ is the set of left cosets of some subgroup $H \leq B$. In Theorem 1.1 of \cite{autpermwreath}, a similar description as $(\bigstar)$ is given for the subgroup
\[
\mathcal{B}(W) := \left\{\theta \in \aut(W) \ | \ \theta(B) = f B f^{-1} \text{ for some $f \in \prod_{X} \Gamma$}\right\} \leq \aut(W).
\]
If $H = \{e\}$, then $W$ becomes a standard wreath product and we have $\mathcal{B}(W) = \aut(W)$.

Since the action $V \curvearrowright \QQ_2$ is transitive, taking $B := V$ and $H := \mathrm{Stab}(x)$ for some $x \in \QQ_2$, we have that $W \cong G$. Theorem 1.1 of \cite{autpermwreath} asserts that the decomposition $(\bigstar)$ holds for $\bg$ after replacing $\aut(B)$ with $A_1$, i.e.
\[
\bg = \left(\left(A_{D,V} \rtimes \aut(\Gamma)\right) \cdot I \right) \rtimes A_1.
\]
Recalling that
\[
\mathcal{B}(G) = (((A_6 \rtimes A_5) \rtimes A_4 ) \rtimes A_2 ) \rtimes A_1,
\]
Theorem \ref{thm:autdecomp} gives an alternative description of $\mathcal{B}(G)$ to \cite{autpermwreath}.

We can also use Theorem \ref{thm:autdecomp} to describe the group $A_{D,V}$. If $\theta \in A_{D,V}$, an application of Theorem \ref{thm:autdecomp} and a quick computation shows that $\theta = \theta_6 \circ \theta_5 \circ \theta_2$, where each $\theta_i \in A_i$. Since $\theta_5$ fixes $V$ pointwise, it must be that $\theta_5(D) = D$. Letting $g \in \Gamma$, and $g_{\QQ_2}$ be the constant map $\QQ_2 \to \Gamma$ and equal to $g$, then $\theta_5(g_{\QQ_2}) = \gamma(g)_{\QQ_2}$ for some unique $\gamma(g) \in \Gamma$. The map $\gamma: \Gamma \to \Gamma$ then defines an automorphism of $\Gamma$, and using the notation of the proof of Lemma \ref{lem:autsplit}, we have $\theta_2 = \overline{\gamma^{-1}}$. From Lemma \ref{lem:etalem}, we also have $\gamma(g) = g \cdot z(g)$ for some (unique) group morphism $z: \Gamma \to Z \Gamma$.

This implies that if $Z \Gamma = e_\Gamma$ or $\Gamma$ is perfect, then $A_{D, V} = \{\mathrm{id}_G\}$. Since $\bg = \aut(G)$ in this case, Hassanabadi's characterisation yields
\[
\aut(G) = \left( \aut(\Gamma) \cdot I \right) \rtimes A_1.
\]
\end{remark}

\begin{remark}\label{rmk:restrictedautembedding}
    In \cite{jonestechI}, Brothier gave a complete description of the automorphism group of the restricted untwisted permutational wreath product $G_0 := \oplus_{\QQ_2} \Gamma \rtimes V$. We now compare $\aut(G_0)$ and $\aut(G)$, where $G = \prod_{\QQ_2} \rtimes V$ is the corresponding unrestricted wreath product. 
    
    Given $\theta \in \aut(G_0)$, as shown in \cite{jonestechI} and re-explained by Theorem \ref{thm:isodecomp}, there exists a family $(\kappa_x)_{x \in \mathbb{Q}_2}$ of automorphisms of $\Gamma$ satisfying
    \[\theta(a)(\varphi(x)) = \kappa_x(a(x)) \text{ for all $a \in \oplus_{\QQ_2} \Gamma$ and $x \in \QQ_2$}. \tag{$\heartsuit$}\]
    The application of Theorem \ref{thm:isodecomp} here comes from the fact that the group $\oplus_{\mathbb{Q}_2} \Gamma$ is isomorphic to its direct square via the bijection $\QQ_2 \to \QQ_2 \sqcup \QQ_2$ of Example \ref{eg:cantordyadicjonesiso}, and this isomorphism implements the action $V \curvearrowright \oplus_{\QQ_2} \Gamma$. Thanks to $(\heartsuit)$, we can extend $\theta$ to all of $G$ using the same formula:
    \[
    \overline{\theta}(a)(\varphi(x)) := \kappa_x(a(x)) \text{ and $\overline{\theta}(v) := \theta(v)$ for all $a \in \prod_{\QQ_2} \Gamma$, $x \in \QQ_2$ and $v \in V$.}
    \]
    The map $\overline{\theta} : G \to G$ is an automorphism of $G$, and the map $\iota: \aut(G_0) \to \aut(G), \theta \mapsto \overline{\theta}$ is an embedding of groups.
    
    To characterise $\aut(G_0)$, Brothier defined four subgroups $E_1, \dots, E_4 \leq \aut(G_0)$, whose elements were called elementary, and built a group $Q = (E_4 \times E_3) \rtimes (E_2 \times E_1)$ and a surjection $\Xi: Q \to \aut(G_0)$. Due to the small kernel of $\Xi$, this yields an almost unique factorisation of automorphisms of $G_0$ into elementary automorphisms. This description can be rephrased to be more comparable to Theorem \ref{thm:autdecomp}. Using a similar approach to the proof of Theorem \ref{thm:autdecomp}, one has the decomposition 
    \[ \aut(G_0) = ( ( \wt{A}_4 \rtimes \wt{A}_3)\rtimes \wt{A}_2)\rtimes \wt{A}_1, \tag{$\spadesuit$}\]
    where 
    \begin{itemize}
        \item $\wt{A}_1 = \stabn = A_1$,
        \item $\wt{A}_2 = \aut(\Gamma) = A_2$,
        \item $\wt{A}_3 = Z \Gamma \cong A_3$,
        \item $\wt{A}_4 = \{ h \in A_4 \ | \ \text{$h$ normalises $\oplus_{\QQ_2} \Gamma$}\}$.
    \end{itemize} Recall that $A_4$ consists of the maps $h : \QQ_2 \to \Gamma$ such that $h(00\cdots) \in Z \Gamma$, modulo constant maps $\QQ_2 \to Z \Gamma$.
    
    The decomposition $(\spadesuit)$ is achieved by pre-composing the inclusion $\iota: \aut(G_0) \to \aut(G)$ with the split epimorphisms $\epi_1: \aut(G) \to A_1$, $\epi_2: \ker \epi_1 \to A_2$ and $\epi_3: \ker \epi_2 \to A_3$ from the proof of Theorem \ref{thm:autdecomp}. For $i = 1$ and $i = 2$, the sections that we used for $\epi_i$ factor through $\iota$, giving sections for $\epi_i \circ \iota$. Equivalently, the automorphisms in $A_1$ and $A_2$ restrict to automorphisms of $G_0$. This yields decompositions 
    \[\aut(G_0) = \ker (\chi_1 \circ \iota) \rtimes \wt{A}_1 \text{ and } \ker (\chi_1 \circ \iota) = \ker (\chi_2 \circ \iota) \rtimes \wt{A}_2.\]
    We need to find a different section for $\epi_3 \ \circ \ \iota$, since an automorphism of $G$ defined by the formula $av \mapsto a \cdot s(z)_v \cdot v$ with $z \in Z \Gamma$ (an element of $A_3$) does not restrict to an automorphism of $G_0$ in general. This is remedied by replacing the cocycle $s(z)$ with a finitely supported one.

    Let $\nu: \QQ_2 \to \ZZ$ be the dyadic valuation, so that $\nu(x_1 x_2 x_3 \cdots) := 2^{-i}$, where $i$ is the largest positive integer satisfying $x_i \neq 0$. Fix $z \in Z \Gamma$, and define
    \[ s_0(z)_v(x) := \zeta^{p_v(x)}\text{, $v \in V$, $x \in \QQ_2$,} \] 
    where
    \[p_v(x) := \log_2(v'(v^{-1}(x))) - \nu(x) +\nu(v^{-1}x)\text{, $x \in \QQ_2$}.\] This yields a map $s_0(z): V \to K$, and it is shown in \cite[Proposition 4.2]{jonestechI} that $s_0(z)$ is a cocycle valued in $\oplus_{\QQ_2} \Gamma$, i.e.
    \[
    s_0(z)_v \in \oplus_{\QQ_2} \Gamma \text{ and $s_0(z)_{vw} = s_0(z)_{v} \cdot \pi(v)(s_0(z)_{w})$ for all $v, w \in V$.}
    \] 
    With a little bit of work, one shows that $\epi_3 \circ \iota$ splits via the embedding $Z \Gamma \to \ker(\epi_2 \circ \iota)$, $z \mapsto \left(av \mapsto a \cdot s_0(z) \cdot v \right)$. This gives us that
    \[\ker (\chi_2 \circ \iota) = \ker (\chi_3 \circ \iota) \rtimes \wt{A}_3.\]
   If $\theta \in \ker (\chi_3 \circ \iota)$, then by definition $\overline{\theta} \in \ker (\chi_3)$. One can then show that $\theta = \ad(h)$ for some $h \in \left(\prod_{\QQ_2} \Gamma\right) / Z \Gamma$. Moreover, $h(00\cdots) \in Z \Gamma$ since $\theta \in \ker (\epi_3 \circ \iota)$, and of course $h$ must normalise $\oplus_{\QQ_2} \Gamma$. This gives the inclusion $\wt{A}_4 \subseteq \{h \in A_4 \ | \ h \text{ normalises $\oplus_{\QQ_2} \Gamma$}\}$, while the reverse inclusion can be quickly checked. We have arrived at the decomposition $(\spadesuit)$.
    
    How well does the decomposition of $\aut(G_0)$ in $(\spadesuit)$ match up with the decomposition of $\aut(G)$ in Theorem \ref{thm:autdecomp} under the embedding $\iota: \aut(G_0) \hookrightarrow \aut(G)$? Let $\theta \in \aut(G_0)$, and via $(\spadesuit)$ write $\theta = \theta_4 \circ \theta_3 \circ \theta_2 \circ \theta_1$ with each $\theta_i \in \wt{A}_i$. Next, via Theorem \ref{thm:autdecomp}, write $\iota(\theta) = \lambda_6 \circ \lambda_5 \circ \cdots \circ  \lambda_1$, with each $\lambda_i \in A_i$. We then have that $\lambda_i = \iota(\theta_i)$ when $i = 1$ or $i = 2$. Writing $\theta_3: av \mapsto a \cdot s_0(z)_v \cdot v$ for a unique $z \in Z \Gamma$, we have
    \[
    s_0(z)_v = s(z)_v \cdot [f,v],
    \]
    where $f: (\prod_{\QQ_2} \Gamma) / Z \Gamma$ is defined by the formula 
    \[
    f(x) := z^{- \nu(x)}, \ x \in \QQ_2.
    \]
    Thus, $\iota(\theta_3) = \ad(f) \circ (av \mapsto a \cdot s(z)_v \cdot v)$, and writing $\theta^4 = \ad(h)$ for a unique $h \in \wt{A}_4$, we obtain that
    \[
    \lambda_3: av \mapsto a \cdot s(z)_v \cdot v \text{ and } \lambda_4 = \ad(hf).
    \]
    Finally, $\lambda_5 = \lambda_6 = \mathrm{id}_G$. I.e., when $i \neq 3$, $\iota(\theta_i) \in \aut(G)$ has one factor in the decomposition of Theorem \ref{thm:autdecomp}, while $\iota(\theta_3)$ splits into two factors.
\end{remark}

\bibliographystyle{plain}

\end{document}